\def\ra{\rightarrow}
\newcommand{\beq}{\begin{eqnarray}}
\newcommand{\eeq}{\end{eqnarray}}
\newcommand\CC{\mathbb C}
\newcommand\ZZ{\mathbb Z}
\newcommand\TT{\mathbb T}
\newcommand{\cL}{\mathcal{L}}
\newcommand{\cG}{\mathcal{G}}
\newcommand{\cI}{\mathcal{I}}
\def\cA{{\mathcal A}}
\def\cB{{\mathcal B}}
\def\cF{{\mathcal F}}
\def\cG{{\mathcal G}}
\def\cI{{\mathcal I}}
\def\cL{{\mathcal L}}
\def\cM{{\mathcal M}}
\newtheorem{theorem}{Theorem}[section]
\newtheorem{lemma}[theorem]{Lemma}
\theoremstyle{definition}
\theoremstyle{remark}
\newtheorem{remark}[theorem]{Remark}
\begin{document}

\title{T-duality and the exotic chiral de Rham complex}

\author{Andrew Linshaw}
\address[A Linshaw]{Department of Mathematics,
University of Denver, Denver, CO 80208, USA}
\email{andrew.linshaw@du.edu}

\author{Varghese Mathai}
\address[V Mathai]{Department of Pure Mathematics, University of Adelaide,
Adelaide, SA 5005, Australia}
\email{mathai.varghese@adelaide.edu.au}

\begin{abstract}
Let $Z$ be a principal circle bundle over a base manifold $M$ equipped with an integral closed $3$-form $H$ called the flux. Let $\widehat{Z}$ be the T-dual circle bundle over $M$ with flux $\widehat{H}$. Han and Mathai recently constructed the $\mathbb{Z}_2$-graded space of exotic differential forms $\mathcal{A}^{\bar{k}}(\widehat{Z})$. It has an additional $\mathbb{Z}$-grading such that the degree zero component coincides with the space of invariant twisted differential forms $\Omega^{\bar{k}}(\widehat{Z}, \widehat{H})^{\widehat{\TT}}$, and it admits a differential that extends the twisted differential $d_{\widehat{H}} = d + \widehat{H}$. The T-duality isomorphism $\Omega^{\bar{k}}(Z,H)^{\TT} \rightarrow \Omega^{\overline{k+1}}(\widehat{Z}, \widehat{H})^{\widehat{\TT}}$ of Bouwknegt, Evslin and Mathai extends to an isomorphism $\Omega^{\bar{k}}(Z,H) \rightarrow \mathcal{A}^{\overline{k+1}}(\widehat{Z})$. In this paper, we introduce the exotic chiral de Rham complex $\mathcal{A}^{\text{ch},\widehat{H},\bar{k}}(\widehat{Z})$ which contains $\mathcal{A}^{\bar{k}}(\widehat{Z})$ as the weight zero subcomplex. We give an isomorphism $\Omega^{\text{ch},H,\bar{k}}(Z) \rightarrow \mathcal{A}^{\text{ch},\widehat{H},\overline{k+1}}(\widehat{Z})$ where $\Omega^{\text{ch},H,\bar{k}}(Z)$ denotes the twisted chiral de Rham complex of $Z$, which chiralizes the above T-duality map. \end{abstract}

\keywords{}

\maketitle

\section{Introduction}

A space of exotic differential forms with an equivariantly flat superconnection \cite{MQ} was first defined on loop space in the paper \cite{HM15}, which  we now briefly recall here.   
  
Let $(H, B_\alpha, F_{\alpha\beta}, L_{\alpha\beta})$ denote a gerbe with connection on $Z$ (cf. \cite{Brylinski}), where 
$(H, B_\alpha, F_{\alpha\beta})$ denotes the Deligne class of the closed integral 3-form $H$ with respect to 
a Brylinski open cover (cf. \cite{HM15}), and $ L_{\alpha\beta}$
denotes the line bundles on double overlaps that determines the gerbe $\cG$. The holonomy of the gerbe is then a line 
bundle $\cL$ with connection $d+\tau(B_\alpha)$ having curvature $\tau(H)$ on loop space $LZ$, where $\tau$ denotes the transgression map.
We consider the space of invariant exotic differential forms on loop space $LM$, $\Omega^{\bar k}(LZ, \cL)^{S^1} $, with exotic differential
$D=\nabla^\cL -i_K + \bar H$, where $\nabla^\cL$ is the connection on the holonomy line bundle $\cL$ given locally by
$d+\tau(B_\alpha)$, $i_K$ is contraction by the rotation vector field $K$, and $ \bar H$ denotes the 3-form on $LZ$ given by 
the canonical extension of $H$ to loop space. Then a computation in \cite{HM15} shows that $D^2 = L_K$, so that 
$D^2 = 0$ on $\Omega^{\bar k}(LZ, \cL)^{S^1} $.

Let $\TT \to Z \stackrel{\pi}{\to} M$ be a principal circle bundle over a base manifold $M$ with background $\TT$-invariant flux $H$, which is a closed 3-form on $Z$. Then there is a T-dual circle bundle $\widehat{\TT}\to\widehat{Z} \stackrel{\hat\pi}{\to} X$ with T-dual background $\widehat{\TT}$-invariant flux $\widehat{H}$ which is a closed 3-form on $\widehat{Z}$, such that $c_1(Z) = \widehat{\pi}_*[\widehat{H}]$ and $c_1(\widehat{Z}) = \pi_*[ H]$, and the constraint that $[H]=[\widehat{H}]$ on the correspondence space
$Z\times_M \widehat{Z}$  ensures that $[\widehat{H}]$ is uniquely defined. This is the setting of \cite{BEM,BEM2}. Let $L = Z\times_\TT\CC$ and $\widehat L = \widehat{Z} \times_{\widehat{\TT}} \CC$ denote the associated line bundles over the base space $M$.
 
 The precise relation between \cite{HM15} and \cite{HM18} is that when $Z$ is the total space of a principal circle bundle, then
  there is a natural infinite sequence of embeddings $\iota_n : Z \to LZ$ defined by $\iota_n(x): S^1\ni t\mapsto  \gamma_x(t)=t^n\cdot x$, for all $n\in \ZZ$. We consider such sequence of embeddings motivated by the fact that there are $\mathbb{Z}$ many connected components 
  in the loop space $L\TT$. We have $\iota_n^*(\cL) \cong  \pi^*(\widehat L)^{\otimes n}$ since they have the same Chern class. The loop space $LZ$ has the natural circle action by rotating loops, and $Z$ has a circle action as the total space of a circle bundle. To tell the difference of these two circle actions, we use $S^1$ for the  circle action by rotating loops, and $\TT$ for the free circle action on $Z$ as a principal circle bundle. We have that for $n\neq 0$, 
$$\iota_n^*: \Omega^{\bar k}(LZ, \cL)^{S^1} \longrightarrow \Omega^{\bar k}(Z, \pi^*(\widehat{ L}^{\otimes n}))^{\TT}$$
 intertwines the equivariantly flat superconnections $D$ and $\pi^*\nabla^{\widehat L^{\otimes n}}-\iota_{n{v}}+{H}$ on both spaces. Here $v$ is the vector field on $Z$ which infinitesimally generates the action of $\TT$. This point of view does motivate us to develop the exotic theories on $Z$.

Recall that the local T-duality rules, called the Buscher rules, were written in \cite{Bus, AABL}. The relation of T-duality with K-theory in the absence of an H-flux was studied in \cite{Hori1,Hori}, and in the presence of an H-flux in \cite{BEM,BEM2} where for the first time there was topology change between spacetime and its T-dual. See also \cite{BS} and \cite{MR05,MR05b,MR06,BHM} for alternate approaches to T-duality.
In \cite{HM18}, the T-duality isomorphism given in \cite{BEM,BEM2} was extended to a mapping from the full space of complex-valued differential forms defined on a principal circle bundle. In doing so, the striking result obtained was that the T-dual data of this space is given by the space of exotic differential forms defined on the T-dual principal circle bundle. The definition of exotic differential forms was inspired by their previous work, \cite{HM15}.

In order to define this T-duality mapping, let $L, \widehat L$ denote the complex line bundles associated to the circle bundles $Z, \widehat{Z}$ with the standard representation of the circle on the complex plane respectively. The {\em exotic differential forms} are then given by
$$\mathcal{A}^{\bar k}(Z)=\bigoplus_{n\in \ZZ}\mathcal{A}^{\bar k}_n(Z)^{\TT}:=\bigoplus_{n\in \ZZ}\Omega^{\bar k}(Z, \pi^*(\widehat{L}^{\otimes n}))^{\TT},$$
$$\mathcal{A}^{\bar k}(\widehat{Z})=\bigoplus_{n\in \ZZ}\mathcal{A}^{\bar k}_n(\widehat{Z})^{\widehat \TT}:=\bigoplus_{n\in \ZZ}\Omega^{\bar k}(\widehat{Z}, \widehat{\pi}^*(L^{\otimes n}))^{\widehat \TT}$$
for $\bar k= k \mod 2$, and where we have taken the direct sum above to be the Fr\'{e}chet space completion of the standard direct sum. This definition of the direct sum (as a completion) will be the implicit definition from here on out when using direct sums in the context of the exotic structures. Note that in \cite{HM18}, the notation $\mathcal{A}^{\bar k}(Z)^{\TT}$ and $\mathcal{A}^{\bar k}(\widehat{Z})^{\widehat{\TT}}$ is used instead of $\mathcal{A}^{\bar k}(Z)$ and $\mathcal{A}^{\bar k}(\widehat{Z})$; we have dropped the $\TT$ and $\widehat{\TT}$ invariant notation on the direct sums for simplicity.

Now define the subspace of weight $-n$ differential forms on $Z$ to be,
\begin{align}
 \Omega^*_{-n}(Z):=\{\omega \in \Omega^*(Z)|\ {\rm Lie}_{v}\omega=-n\omega\},
\end{align}
where ${\rm Lie}_{v}$ denote the Lie derivative along $v$. We observe that
$$\Omega^{\bar k}_0(Z)=\Omega^{\bar k}(Z)^{\TT}, \ \  \mathcal{A}^{\overline{k+1}}_0(\widehat{Z})^{\widehat \TT}=\Omega^{\overline{k+1}}(\widehat{Z})^{\widehat \TT}.$$
Then under the above choices of Riemannian metrics and flux forms, the results of \cite{HM18} show that there is a sequence of  isometries,
\begin{align}
\tau_n \colon \Omega^{\bar k}_{-n}(Z) \to \mathcal{A}^{\overline{k+1}}_n(\widehat{Z})^{\widehat \TT},
\end{align}
defined by the {\em exotic Hori formula} from $Z$ to $\widehat{Z}$ given in \cite{HM18} for $\bar k= k \mod 2$, where the twisted de Rham differential $d+H$ maps to the differential $-(\widehat{\pi}^*\nabla^{L^{\otimes n}}-\iota_{n\widehat{v}}+\widehat{H})$, and we observe that $\tau_0=T$.  One similarly has 
a sequence of  isometries,
\begin{equation}
\sigma_n \colon\mathcal{A}^{\bar k}_n(Z)^{\TT} \to \Omega^{\overline{k +1}}_{-n}(\widehat{Z}),
\end{equation}
defined by the {\em inverse exotic Hori formula} form $Z$ to $\widehat{Z}$ given in equation \cite{HM18} for $\bar k= k \mod 2$, where the differential $\pi^*\nabla^{\widehat L^{\otimes n}}-\iota_{n{v}}+{H}$
maps to  the twisted de Rham differential $-(d+\widehat{H})$, and $\sigma_0=T$. Similarly, one can define the sequences of isometries $\widehat\tau_n, \widehat\sigma_n$ on $\widehat{Z}$. Although the extension of the Fourier-Mukai transform 
to all differential forms on $Z$ is slightly asymmetric, one has the following crucial identities, verified in \cite{HM18}:
\begin{align}
{\rm -Id} =\widehat\sigma_n \circ \tau_n \colon \Omega^{\bar k}_{-n}(Z)  \longrightarrow \Omega^{\bar k}_{-n}(Z),\\
{\rm -Id} =\widehat\tau_n \circ \sigma_n  \colon \mathcal{A}^{\bar k}_n(Z)^{\TT} \longrightarrow \mathcal{A}^{\bar k}_n(Z)^{\TT}.  
\end{align}
This is interpreted as saying that T-duality, when applied twice, returns the object to minus itself, which arises due to the convention of integration along the fiber. This was a result previously verified in \cite{BEM,BEM2} for the special case of when $n=0$.

This shows that for each of either $Z$ or $\widehat{Z}$, there are two theories (at degree 0 the two theories coincide), and there are also graded isomorphisms between the two theories of both sides. 

Moreover, when $n\neq 0$ 
the complex $(\mathcal{A}^{\bar k+1}_n(\widehat{Z})^{\widehat \TT}, \widehat{\pi}^*\nabla^{L^{\otimes n}}-\iota_{n\widehat{v}}+\widehat{H})$ has vanishing cohomology. Therefore, when $n\neq 0$ the complex $(\Omega^{\bar k}_{-n}(Z), d+H)$ also has 
vanishing cohomology. In \cite{HM18},  an explicit homotopy is constructed to show this. We mention that, inspired by  \cite{HM18}, exotic Courant algebroids were defined in \cite{Crilly} where the T-duality isomorphism in \cite{cavalcanti} for invariant Courant algebroids was extended to a T-duality isomorphism of exotic Courant algebroids.

\subsection{Chiralization} The chiral de Rham complex is a sheaf of vertex algebra $\Omega^{\text{ch}}_M$ on any smooth manifold $M$ that was introduced by Malikov, Schechtman, and Vaintrob in \cite{MSV1}. It has had a tremendous impact on string theory in the last 20 years; see for example \cite{Borisov,KO,Witten07}. The global section algebra $\Omega^{\text{ch}}(M)$ has an $\mathbb{N}$-grading by conformal weight, and it chiralizes the de Rham complex $(\Omega(M), d)$ in the sense that it admits a differential $D$ which preserves the weight spaces, and the weight zero subcomplex $(\Omega^{\text{ch}}(M)[0], D)$ is isomorphic to $(\Omega(M), d)$.

In this paper, we chiralize the space of exotic differential forms on $\widehat{Z}$ to the vertex algebra of {\it exotic chiral differential forms} 
$$\mathcal{A}^{{\rm ch}, \widehat{H}}(\widehat{Z}) = \bigoplus_{n \in \mathbb{Z}}\mathcal{A}_n^{{\rm ch}, \widehat{H}}(\widehat{Z}).$$ This is the global section algebra of a sheaf of $\mathbb{Z}_2$-graded vertex algebras on $\widehat{Z}$. We denote the graded components by $\mathcal{A}_n^{{\rm ch}, \widehat{H},\bar{k}}(\widehat{Z})$ for $\bar{k} \in \mathbb{Z}_2$. Unlike the chiral de Rham complex, $\mathcal{A}^{{\rm ch}, \widehat{H}}(\widehat{Z})$ is naturally equipped only with a filtration, not a grading, by weight: 
\begin{equation} \label{intro:filtration} \mathcal{A}^{{\rm ch}, \widehat{H}}(\widehat{Z})_{[0]} \subseteq \mathcal{A}^{{\rm ch}, \widehat{H}}(\widehat{Z})_{[1]} \subseteq \mathcal{A}^{{\rm ch}, \widehat{H}}(\widehat{Z})_{[2]} \subseteq \cdots,\qquad \mathcal{A}^{{\rm ch}, \widehat{H}}(\widehat{Z})= \bigcup_{i\geq 0} \mathcal{A}^{{\rm ch}, \widehat{H}}(\widehat{Z})_{[i]}.\end{equation} We also equip $\mathcal{A}^{{\rm ch}, \widehat{H}}(\widehat{Z})$ with an exotic differential $D_{\widehat{Z},\widehat{H}}$ which shifts the $\mathbb{Z}_2$-grading and preserves the weight filtration. This structure chiralizes the exotic differential forms in the sense that for all $n$, the weight zero subcomplex $(\mathcal{A}_n^{{\rm ch}, \widehat{H},\bar k}(\widehat{Z})_{[0]}, D_{\widehat{Z},\widehat{H}})$ is isomorphic to $(\cA_n^{\bar{k}}(\widehat{Z}), \pi^*\nabla^{L^{\otimes n}}-\iota_{n{\widehat{v}}}+\widehat{H})$. In fact, $D_{\widehat{Z},\widehat{H}}$ is a square-zero operator on $\mathcal{A}^{{\rm ch}, \widehat{H}}(\widehat{Z})$; the proof requires a very delicate calculation and depends crucially on the nonassociativity of the normally ordered product. 

Using the flux $H$ on $Z$, there is an $H$-twisted version of the chiral de Rham complex $\Omega^{\text{ch},H}(Z)$ which was introduced in \cite{LM15}. It turns out to be isomorphic to $\Omega^{\text{ch}}(Z)$ via an {\it untwisting trick}; see Theorem 3 of \cite{LM15}, and for convenience, we use $\Omega^{\text{ch},H}(Z)$ instead of $\Omega^{\text{ch}}(Z)$ throughout this paper. As in the case of differential forms, the $\TT$-action on $Z$ induces a Fourier decomposition $\Omega^{\text{ch},H}(Z) = \bigoplus_{n\in \mathbb{Z}} \Omega^{\text{ch},H}_n(Z)$, which again denotes the Fr\'{e}chet space completion of the standard direct sum. There is also a $\mathbb{Z}_2$-grading, and we denote the graded components by $\Omega^{\text{ch},H,\bar{k}}_n(Z)$ for $\bar{k} \in \mathbb{Z}_2$. Our main result is that T-duality gives a degree shifting linear isomorphism
\begin{equation} \label{intro:main}
\tau^{\text{ch}}_n: \Omega^{{\rm ch}, H, \bar{k}}_{-n}(Z) \rightarrow \mathcal{A}_{n}^{\text{ch}, \widehat{H},\overline{k+1}}(\widehat{Z}),
\end{equation}
for all $n\in \mathbb{Z}$. This map preserves the weight filtration and coincides with $\tau_n: \Omega^{\bar{k}}_{-n}(Z) \rightarrow \mathcal{A}^{\overline{k+1}}_n(\widehat{Z})^{\widehat \TT}$ on the weight zero subspace. These isomorphisms combine to yield a linear isomorphism 
\begin{equation} \label{intro:main2} \tau^{\text{ch}}: \Omega^{{\rm ch}, H,\bar{k}}(Z) \rightarrow \mathcal{A}^{\text{ch}, \widehat{H},\overline{k+1}}(\widehat{Z}).\end{equation}
In fact, $\tau^{\text{ch}}$ is more than a linear isomorphism. We will also define a vertex algebra isomorphism $\phi^{\text{ch}}: \Omega^{\text{ch},H}(Z) \rightarrow  \mathcal{A}^{\text{ch},\widehat{H}}(\widehat{Z})$ which preserves the $\mathbb{Z}_2$-grading. Regarding $\Omega^{\text{ch},H}(Z)$ and $\mathcal{A}^{\text{ch}, \widehat{H}}(\widehat{Z})$ as modules over themselves, $\tau^{\text{ch}}$ intertwines the module structures in the sense that 
$$\tau^{\text{ch}}(\nu_m (\mu)) = (-1)^{|\nu|} (\phi^{\text{ch}}(\nu))_m (\tau^{\text{ch}}(\mu)), \ \text{for all} \ m \in \mathbb{Z}.$$ Here $\nu$ is one of the generators of $\Omega^{\text{ch},H}(Z)$ regarded as a vertex algebra, and $\mu \in \Omega^{\text{ch},H}(Z)$ regarded as a $\Omega^{\text{ch},H}(Z)$-module. 

By Theorem 2 of \cite{LM15}, the cohomology of $\Omega^{\text{ch},H}(Z)$ with respect to its twisted differential $D_H$ vanishes in positive weight, and coincides with the classical twisted cohomology in weight zero. In weight zero, $\tau^{\text{ch}}$ intertwines the differentials $D_H$ and $D_{\widehat{Z}, \widehat{H}} $ up to a sign, but unfortunately, this intertwining property no longer holds in positive weight. It is therefore not obvious that the inclusion of complexes $$(\mathcal{A}_n^{\bar{k}}(\widehat{Z}), \pi^*\nabla^{L^{\otimes n}}-\iota_{n{\widehat{v}}}+\widehat{H}) \hookrightarrow (\mathcal{A}_n^{\text{ch}, \widehat{H},\bar{k}}(\widehat{Z}), D_{\widehat{Z}, \widehat{H}})$$ induces an isomorphism in cohomology, although we expect this to be the case. In the last section, we will prove this in the special case where both circle bundles $Z$ and $\widehat{Z}$ are trivial, and the fluxes $H$ and $\widehat{H}$ are both zero.

Note that in the case $n = 0$, the isomorphism \eqref{intro:main} does {\it not} recover the chiral T-duality isomorphism of our previous paper \cite{LM15}, namely,
\begin{equation} \label{intro:old}
(\Omega^{\text{ch},H,\bar{k}}(Z))^{i\mathbb{R}[t]} / \langle L_A  - \iota_A H \rangle \rightarrow (\Omega^{\text{ch},\widehat{H},\overline{k+1}}(\widehat{Z}))^{i\mathbb{R}[t]} / \langle L_{\widehat A}  - \iota_{\widehat{A}} \widehat{H} \rangle.\end{equation} 
In particular, the $n=0$ term on the left side of \eqref{intro:main} is isomorphic to the $\TT$-invariant space $\Omega^{\text{ch},H,\bar{k}}(Z)^{\TT}$, which is larger than the left side of \eqref{intro:old}. Moreover, the right side of \eqref{intro:main} for $n=0$ is a different structure and is not a subquotient of the chiral de Rham complex of $\widehat{Z}$. The T-duality isomorphism \eqref{intro:main2} in this paper is stronger and more natural than the one in \cite{LM15} because on the left side the entire chiral de Rham complex appears rather than a subquotient. But the price we pay is that the object on the right side is a new kind of vertex algebra sheaf which incorporates sections of a line bundle $L$ on $\widehat{Z}$. This construction is very special since it makes use of the fact that $Z$ and $\widehat{Z}$ are $T$-dual to each other. An open question is whether it is possible to construct the exotic chiral de Rham complex on more general manifolds with line bundles, generalizing the construction given in this paper. \\

\section{Vertex algebras}\label{sect:vertex}

In this section, we define vertex algebras, which have been discussed from various points of view in the literature (see for example \cite{Borcherds,FLM,K,FBZ}). We will follow the formalism developed in \cite{LZ2} and partly in \cite{LiI}. Let $V=V_0\oplus V_1$ be a super vector space over $\mathbb{C}$, and let $z,w$ be formal variables. Let $\text{QO}(V)$ denote the space of linear maps $$V\ra V((z))=\{\sum_{n\in\mathbb{Z}} v(n) z^{-n-1}|
v(n)\in V,\ v(n)=0\ \text{for} \ n>\!\!>0 \}.$$ Each $a\in \text{QO}(V)$ can be represented as a power series
$$a=a(z)=\sum_{n\in\mathbb{Z}}a(n)z^{-n-1}\in \text{End}(V)[[z,z^{-1}]].$$ Each $a\in
\text{QO}(V)$ is assumed to be of the form $a=a^0+a^1$ where $a^i:V_j\ra V_{i+j}((z))$ for $i,j\in\mathbb{Z}/2\mathbb{Z}$, and we write $|a^i| = i$.

For all $n\in \mathbb{Z}$, $\text{QO}(V)$ has a bilinear operation defined on homogeneous elements $a,b$ by 
$$ a(w)_{(n)}b(w)=\text{Res}_z a(z)b(w)\ \iota_{|z|>|w|}(z-w)^n -
(-1)^{|a||b|}\text{Res}_z b(w)a(z)\ \iota_{|w|>|z|}(z-w)^n.$$
Here $\iota_{|z|>|w|}f(z,w)\in\mathbb{C}[[z,z^{-1},w,w^{-1}]]$ denotes the
power series expansion of a rational function $f$ in the region
$|z|>|w|$. For $a,b\in \text{QO}(V)$, we have the following identity of power series, known as the {\it operator product expansion} (OPE) formula.
 \begin{equation}\label{opeform} a(z)b(w)=\sum_{n\geq 0}a(w)_{(n)}b(w)\ (z-w)^{-n-1}+:a(z)b(w):. \end{equation}
Here $:a(z)b(w):\ =a(z)_-b(w)\ +\ (-1)^{|a||b|} b(w)a(z)_+$, where $$a(z)_-=\sum_{n<0}a(n)z^{-n-1},\qquad a(z)_+=\sum_{n\geq
0}a(n)z^{-n-1}.$$ We write
$$a(z)b(w)\sim\sum_{n\geq 0}a(w)_{(n)}b(w)\ (z-w)^{-n-1},$$ where
$\sim$ means equal modulo the term $:a(z)b(w):$, which is regular at $z=w$. 

Note that $:a(w)b(w):$ is a well-defined element of $\text{QO}(V)$. It is called the {\it normally ordered product} of $a$ and $b$, and it
coincides with $a_{(-1)}b$. The other negative products are given by
$$ n!\ a(z)_{(-n-1)}b(z)=\ :(\partial^n a(z))b(z):,\qquad \partial = \frac{d}{dz}.$$
For $a_1(z),\dots ,a_k(z)\in \text{QO}(V)$, the iterated normally ordered product is defined to be
\begin{equation}\label{iteratedwick} :a_1(z)a_2(z)\cdots a_k(z):\ =\ :a_1(z)b(z):,\qquad b(z)=\ :a_2(z)\cdots a_k(z):.\end{equation}
We often omit the variables $z,w$ when no confusion can arise.

We denote the constant power series $\text{Id}_V \in \text{QO}(V)$ by $1$. A subspace $\cA\subseteq \text{QO}(V)$ containing $1$ that is closed under all the above products will be called a {\it quantum operator algebra} (QOA). Elements $a,b\in \text{QO}(V)$ are called {\it local} if if $(z-w)^N [a(z),b(w)]=0$ for some $N\geq 0$. Here $[\cdot ,\cdot]$ denotes the super bracket. A {\it vertex algebra} is a QOA whose elements are pairwise local. This definition is well known to be equivalent to the notion of a vertex algebra in \cite{FLM}. 

A vertex algebra $\cA$ is {\it generated} by a subset $S=\{a_i|\ i\in I\}$ if every $a\in\cA$ can be written as a linear combination of nonassociative words in the letters $a_i$ for $i\in I$ and the above products for $n\in\mathbb{Z}$. We say that $S$ {\it strongly generates} $\cA$ if every $a\in\cA$ can be written as a linear combination of words in the letters $a_i$, and the above products for $n<0$. Equivalently, $\cA$ is spanned by \begin{equation} \label{norelations} \{ :\partial^{k_1} a_{i_1}\cdots \partial^{k_m} a_{i_m}:| \ i_1,\dots,i_m \in I,\ k_1,\dots,k_m \geq 0\}.\end{equation} 

A very useful description of a vertex algebra $\cA$ is a strong generating set $\{a_i|\ i\in I\}$ for $\cA$, together with a set of generators $\{b_k|\ k\in K\}$ for the ideal $\cI$ of relations among the generators and their derivatives, that is, all expressions of the form \eqref{norelations} that vanish. Given such a description, to define a homomorphism $\phi$ from $\cA$ to another vertex algebra $\cB$, it suffices to define $\phi(a_i)$ for $i\in I$ and show the following.
\begin{enumerate}
\item $\phi$ preserves pairwise OPEs among the generators; i.e., $\phi((a_i)_{(n)} a_j) = \phi(a_i)_{(n)}  \phi(a_j)$ for all $i,j \in I$ and $n\geq 0$.
\item $\phi(b_k) = 0$ for all $k \in K$.
\end{enumerate}
This will be our method of constructing vertex algebra homomorphisms in this paper.

A {\it conformal structure} on $\cA$ is an element $L(z) = \sum_{n\in \mathbb{Z}} L_n z^{-n-2} \in \cA$ satisfying $$L(z) L(w) \sim \frac{c}{2}(z-w)^{-4} + 2L(w) (z-w)^{-1} + \partial L(w)(z-w)^{-1},$$ such that $L_{-1}$ acts by $\partial$ on $\cA$ and $L_0$ acts diagonalizably. The constant $c$ is called the {\it central charge}, and the grading by $L_0$-eigenvalue is called {\it conformal weight}. In all our examples, the conformal weight grading is by the nonnegative integers. In the presence of a conformal weight grading, we always write a homogeneous element $a(z) =  \sum_{n\in Z} a(n) z^{-n-1}$ in the form 
\begin{equation} \label{shiftofindex} \sum_{n\in Z} a_n z^{-n-\text{wt}(a)},\qquad a_n = a(n+\text{wt}(a)-1). \end{equation} In this notation, for fields $a,b \in \cA$, we have $a_n b = a_{(n+\text{wt}(a)-1)} b$.

A {\it module} $\cM$ over a vertex algebra $\cA$ is a vector space $\cM$ together with a QOA homomorphism $\cA \ra \text{QO}(\cM)$. In particular, for each $a\in \cA$, we have a field $a_{\cM}(z) = \sum_{n\in \mathbb{Z}} a_{\cM}(n) z^{-n-1}$ where $a_{\cM}(z) \in \text{End}(\cM)$. If $\cA$ and $\cM$ are graded by conformal weight, we write $a_{\cM}(z) = \sum_{n\in \mathbb{Z}} a_{\cM,n} z^{-n-\text{wt}(a)}$, and we require that $a_{\cM,n}$ has weight $- n$.

\section{The chiral de Rham complex}\label{sect:cdr}

The chiral de Rham complex $\Omega^{\text{ch}}_Z$ is a sheaf of vertex algebras on any nonsingular algebraic variety $Z$, which was introduced by Malikov, Schechtman, and Vaintrob \cite{MSV1,MSV2}. As observed in \cite{MSV1}, a similar construction also works in the setting of smooth manifolds. However, the resulting object is no longer a sheaf, but instead is a {\it weak sheaf} in the terminology of \cite{LLS}. We briefly recall what this means. Suppose that we have a family of sheaves of vector spaces $\{\cF_n|~n=0,1,2,\dots\}$ on a smooth manifold $Z$. 
The direct sum $\cF$ defined by $\cF(U) = \bigoplus_{n\geq 0} \cF_n(U)$ for an open set $U\subseteq Z$, is a presheaf but not a sheaf. For example, in the case $Z = \mathbb{R}$ and each $\cF_n$ a copy of the structure sheaf $C^\infty$, if we cover $\mathbb{R}$ by an infinite collection of open intervals, one can use bump functions to construct a family of sections which are compatible on overlaps but do not give rise to a global section of $\cF$, that is, an element of the direct sum. However, $\cF$ does satisfy a slightly weaker version of the reconstruction axiom: 
$$ 0\ra\cF(U)\rightarrow\prod_i\cF(U_i)\rightrightarrows\prod_{i,j}\cF(U_i\cap U_j),$$
is exact for {\it finite} open covers $\{U_i\}$ of an open set $U$. Following \cite{LLS}, a weak sheaf is a presheaf which satisfies this weaker exactness condition.

If $Z$ is a smooth manifold, and $U\subseteq Z$ is any open set, $\Omega^{\text{ch}}(U)$ is an $\mathbb{N}$-graded vertex algebra by conformal weight, and we denote the conformal weight $n$ subspace by $\Omega^{\text{ch}}(U)[n]$. For each $n$, the assignment $U \mapsto \Omega^{\text{ch}}(U)[n]$ defines a sheaf of vector spaces on $Z$, and $\Omega^{\text{ch}}_Z$ is the weak sheaf of vertex algebras defined by $\Omega^{\text{ch}}(U) = \bigoplus_{n\geq 0} \Omega^{\text{ch}}(U)[n]$. Note that $\Omega^{\text{ch}}_Z$ is {\it not} the sheafification of this presheaf, which is too big to be a sheaf of vertex algebras. Similarly, the exotic chiral de Rham complex $\mathcal{A}^{{\rm ch}, \widehat{H} }_{\widehat{Z}}$ that we will construct has only a filtration \eqref{intro:filtration} by conformal weight. Each filtered component $\mathcal{A}^{{\rm ch}, \widehat{H}}(\widehat{Z})_{[i]}$ is an ordinary sheaf, and the union of these components is a weak sheaf. For simplicity, we will drop the word \lq\lq weak" throughout this paper.

For a coordinate open set $U\subseteq \mathbb{R}^n$ with coordinate functions $\gamma^1,\dots,\gamma^n$, the algebra of sections $\Omega^{\text{ch}}(U)$ has odd generators $b^i(z) = \sum_{n\in \mathbb{Z}} b^i_n z^{-n-1}$ and $c^i(z) = \sum_{n\in \mathbb{Z}} c^i_n z^{-n}$, even generators $\beta^i(z) = \sum_{n\in \mathbb{Z}} \beta^i_n z^{-n-1}$, as well as an even generator $f(z) = \sum_{n\in \mathbb{Z}} f_n z^{-n}$ for every smooth function $f = f(\gamma^1,\dots, \gamma^n) \in C^{\infty}(U)$. The field $\beta^i$ corresponds to the vector field $\frac{\partial}{\partial \gamma^i}$, $c^i$ corresponds to the $1$-form $d\gamma^i$, and $b^i$ corresponds to the contraction operator $\iota_{\partial/\partial \gamma^i}$. These fields satisfy the following nontrivial OPE relations
\begin{equation} \begin{split} & \beta^i(z) f(w) \sim \frac{\partial f}{\partial \gamma^i}(w)(z-w)^{-1},
\\ & b^i (z)c^j(w)\sim \delta_{i,j}(z-w)^{-1},\end{split}\end{equation}
which generalizes the formula $\beta^i(z)\gamma^j(w) \sim \delta_{i,j}(z-w)^{-1}$. These OPE relations define a Lie conformal algebra \cite{K}, and $\Omega^{\text{ch}}(U)$ is defined as the quotient of the corresponding universal enveloping vertex algebra by the ideal generated by
\begin{equation} \label{cdrrelations} \partial f - \sum_{i=1}^n :\frac{\partial f}{\partial x^i} \partial \gamma^i:,\qquad :fg: \ - fg, \qquad 1- \text{Id}.\end{equation}
A typical element of $\Omega^{\text{ch}}(U)$ is a linear combination of fields of form
\begin{equation} \label{cdr:typical} :f \partial^{a_1} b^{i_1} \cdots \partial^{a_r} b^{i_r} \partial^{d_1} c^{j_1} \cdots \partial^{d_s} c^{j_s} \partial^{e_1} \beta^{k_1} \cdots \partial^{e_t} \beta^{k_t} \partial^{m_1} \gamma^{l_1} \cdots \partial^{m_u} \gamma^{l_u}:,\end{equation} where $a_i, d_i, e_i \geq 0$ and $m_i\geq 1$. In particular, there are no nontrivial normally ordered relations among the $b^i, c^i, \beta^i, \partial \gamma^i$ and their derivatives, so the set of all Poincar\'e-Birkhoff-Witt monomials in these fields and their derivatives form a basis of $\Omega^{\text{ch}}(U)$ as a module over $C^{\infty}(U)$.

Now consider a smooth change of coordinates $g:U \ra U'$, $$\tilde{\gamma}^i = g^i(\gamma) = g^i(\gamma^1,\dots, \gamma^n),\qquad \gamma^i = f^i(\tilde{\gamma})= f^i(\tilde{\gamma}^1,\dots, \tilde{\gamma}^n).$$ We get the following transformation rules:
\begin{equation} \begin{split} \label{cdrlocaltransform} & \tilde{c}^i = \ :\frac{\partial g^i}{\partial \gamma^j} c^j:, \qquad  \tilde{b}^i = \ :\frac{\partial f^j}{\partial \tilde{\gamma}^i} (g(\gamma)) b^j:,
\\ & \tilde{\beta}^i =\ :\beta^j \frac{\partial f^j}{\partial \tilde{\gamma}^i}(g(\gamma)): + :\frac{\partial^2 f^k}{\partial \tilde{\gamma}^i \partial \tilde{\gamma}^l} (g(\gamma)) \frac{\partial g^l}{\partial \gamma^r} c^r b^k:.\end{split} \end{equation}
These new fields satisfy OPE relations
$$\tilde{b}^i(z) \tilde{c}^j(w) \sim \delta_{i,j}(z-w)^{-1},\qquad \tilde{\beta}^i(z) \tilde{f}(w) \sim \frac{\partial \tilde{f}}{\partial \tilde{\gamma}^i} (z-w)^{-1}.$$ Here $\tilde{f} = \tilde{f}(\tilde{\gamma}^1,\dots, \tilde{\gamma}^n)$ is any smooth function. Therefore $g:U \ra U'$ induces a vertex algebra isomorphism $ \phi_g: \Omega^{\text{ch}}(U) \rightarrow \Omega^{\text{ch}}(U')$. Moreover, given diffeomorphisms of open sets $U_1 \xrightarrow{g} U_2 \xrightarrow{h}U_3$, we get
$\phi_{h \circ g}=\phi_{g} \circ \phi_{h}$. This allows one to define the sheaf $\Omega^{\text{ch}}_Z$ on any smooth manifold $Z$.
Consider the following locally defined fields
\begin{equation} J = \sum_{i=1}^n :b^i c^i:,\qquad Q = \sum_{i=1}^n :\beta^i c^i:,\qquad G = \sum_{i=1}^n :b^i \partial\gamma^i:, \qquad L = \sum_{i=1}^n :\beta^i \partial \gamma^i: - :b^i \partial c^i:.\end{equation} These satisfy the OPE relations of a topological vertex algebra of rank $n$ \cite{LZ1}.
\begin{equation}\begin{split} 
L(z) L(w) & \sim 2L(w)(z-w)^{-2} + \partial L(w)(z-w)^{-1},
\\ L(z) J(w) & \sim -n (z-w)^{-3} + J(w) (z-w)^{-2} +  \partial J(w) (z-w)^{-1}, 
\\ L(z) G(w) & \sim  2G(w) (z-w)^{-2} +  \partial G(w) (z-w)^{-1}, 
\\ L(z) Q(w) & \sim  Q(w) (z-w)^{-2} +  \partial Q(w) (z-w)^{-1}, 
\\ J(z) J(w) & \sim - n (z-w)^{-2}, \qquad G(z) G(w)  \sim 0, \qquad  Q(z) Q(w) \sim 0,
\\ J(z) G(w) & \sim -G(w) (z-w)^{-1}, \qquad J(z) Q(w) \sim Q(w) (z-w)^{-1},
\\ Q(z) G(w) & \sim n (z-w)^{-3} + J(w) (z-w)^{-2} + L(w)(z-w)^{-1}.
\end{split}
\end{equation}
Under $g:U \ra U'$, these fields transform as
\begin{equation} \label{transform:LJG} \begin{split} 
& \tilde{L} =  L,\qquad \tilde{G} = G,
\\ & \tilde{J} = J + \partial \bigg(\text{Tr}\ \text{log}\bigg(\frac{\partial g^i}{\partial b^j}\bigg)\bigg),\qquad 
\tilde{Q} = Q +  \partial \bigg( \frac{\partial}{\partial \tilde{b}^r} \bigg(\text{Tr}\ \text{log} \bigg( \frac{\partial f^i}{\partial \tilde{b}^j} \bigg) \bigg) \tilde{c}^r \bigg),
\end{split}\end{equation}
Therefore $L$ and $G$ are globally defined on any manifold $Z$. Although $J$ and $Q$ are not globally defined in general, the operators $J_0$ and $Q_0$ are well-defined. Note that $\Omega^{\text{ch}}(Z)$ has a bigrading by degree and weight, where the weight is the eigenvalue of $L_0$ and degree is the eigenvalue of $J_0$. Also, $Q_0$ is a square-zero operator and we define the differential $D$ to be $Q_0$. It is {\it vertex algebra derivation}, that is, a derivation of all vertex algebra products, and it coincides with the de Rham differential at weight zero. Note that $G_0$ is a contracting homotopy for $D$, i.e., $[D,G_0] = L_0$. This shows that the cohomology $H^*(\Omega^{\text{ch}}(Z),D)$ vanishes in positive weight. Each $f$ has weight $0$ and degree $0$, $c^i$ has weight $0$ and degree $1$, $\beta^i$ has weight $1$ and degree $0$, and $b^i$ has weight $1$ and degree $-1$. Therefore the weight zero component of $\Omega^{\text{ch}}(Z)$ is just $\Omega(Z)$, and the embedding $\Omega(Z) \hookrightarrow \Omega^{\text{ch}}(Z)$ induces an isomorphism in cohomology.

\section{Coordinate-free description} 
For any open set $U\subseteq Z$, we may regard $f\in C^{\infty}(U)$ and $\omega \in \Omega^1(U)$ as sections of $\Omega^{\text{ch}}(U)$ of weight zero and degrees $0$ and $1$, respectively. Given a vector field $X \in {\rm Vect}(U)$, there are sections $$\iota_X(z) = \sum_{n\in \mathbb{Z}} (\iota_X)_n z^{-n-1},\qquad L_X(z) = \sum_{n\in \mathbb{Z}} (L_X)_n z^{-n-1}$$ in $\Omega^{\text{ch}}(U)$ of weight $1$ and degrees $-1$ and $0$, respectively, and the local description of $\iota_X$ and $L_X$ is given in \cite{LL}. Let $\gamma^1,\dots, \gamma^n$ be local coordinates and $X = \sum_{i=1}^n f_i  \frac{\partial}{\partial \gamma^i}$ where each $f_i = f_i (\gamma^1,\dots, \gamma^n)$ is a smooth function. Then \begin{equation} \iota_X = \sum_{i=1}^n :f_i b^i:,\qquad L_X = D(\iota_X) = \sum_{i=1}^n : \beta^i f_i: + \sum_{i=1}^n \sum_{j=1}^n : \frac{\partial f_j}{\partial \gamma^i} c^i b^j:.\end{equation} The next theorem\footnote{The coordinate-free description of the relations is due to Bailin Song, and we thank him for sharing it with us.} gives a useful coordinate-independent description of $\Omega^{\text{ch}}(U)$ when $U$ is a coordinate open set.

\begin{theorem}  For a coordinate open set $U\subseteq \mathbb{R}^n$, $\Omega^{\text{ch}}(U)$ is strongly generated by the following fields: \begin{equation} \label{coordindep} f \in  C^{\infty}(U),\qquad \omega \in \Omega^1(U),\qquad L_X, \iota_X, \qquad X\in {\rm Vect}(U).\end{equation} These satisfy the following OPE relations.
\begin{equation}\label{opecdr} \begin{split}
& \iota_X(z) \iota_Y(w) \sim 0,
\\ & L_X(z) \iota_Y(w) \sim \iota_{[X,Y]}(w)(z-w)^{-1}, \qquad L_X(z) L_Y(w) \sim L_{[X,Y]}(w)(z-w)^{-1},
\\ & L_X(z) \omega(w) \sim {\rm Lie}_X(\omega)(w)(z-w)^{-1}, \qquad \iota_X(z) \omega(w) \sim \iota_X(\omega)(w)(z-w)^{-1},
\\ & L_X(z) f(w) \sim X(f)(w)(z-w)^{-1}, \qquad \iota_X(z) f(w) \sim 0.
\end{split}\end{equation}
The ideal of normally ordered relations among these fields is generated by the following elements.
\begin{equation} \label{cirelations}  \begin{split}
& 1- {\rm Id},\qquad  :f g: - fg, \qquad  :\nu \omega: -  \nu\omega , \qquad f,g \in C^{\infty}(U), \qquad  \nu,\omega \in \Omega^1(U),
\\ & \iota_{gX}-: g\iota_X:,\qquad L_{gX}-: (dg) \iota_X:-:gL_X:, \qquad g \in C^{\infty}(U),\qquad X \in {\rm Vect}(U),
\\ & \partial g(\phi_1,\dots,\phi_n)-\sum_{i=1}^n  \frac{\partial g}{\partial{\phi_i}} \partial \phi_i ,\qquad g\in C^{\infty}(\mathbb{R}^n),\qquad \phi_i \in C^{\infty}(U).
\end{split}\end{equation}
\end{theorem}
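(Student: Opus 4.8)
The plan is to exhibit $\Omega^{\text{ch}}(U)$ as the vertex algebra presented by the generators \eqref{coordindep}, the OPEs \eqref{opecdr}, and the normally ordered relations \eqref{cirelations}, and then to prove this presentation is faithful by constructing an explicit inverse to the obvious projection. Write $\partial_i$ for the coordinate vector field $\partial/\partial\gamma^i$. First I would check that \eqref{opecdr} defines a Lie conformal algebra $R$ (on $C^\infty(U)\oplus\Omega^1(U)\oplus{\rm Vect}(U)^{\oplus 2}$, with $\mathbb{C}[\partial]$ acting as $\partial$): skew-symmetry and the Jacobi identity for the $\lambda$-brackets reduce, after stripping parameters, to $[X,Y]=-[Y,X]$, the Jacobi identity in ${\rm Vect}(U)$, the fact that ${\rm Lie}$ is a Lie-algebra action on functions and on $1$-forms, and the chiral Cartan relation $L_X=D(\iota_X)$. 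Next I would verify that \eqref{opecdr} holds in $\Omega^{\text{ch}}(U)$; this is a Wick-theorem computation from the local formulas $\iota_X=\sum_i:f_i b^i:$ and $L_X=\sum_i:\beta^i f_i:+\sum_{i,j}:\frac{\partial f_j}{\partial\gamma^i}c^i b^j:$ recorded before the theorem. These two facts yield a homomorphism $q\colon V(R)\to\Omega^{\text{ch}}(U)$ from the universal enveloping vertex algebra of $R$, sending each generator to the field of the same name. Since $\iota_{\partial_i}=b^i$, $d\gamma^i=c^i$, $L_{\partial_i}=\beta^i$, and every $f\in C^\infty(U)$ is itself a generator, the image of $q$ contains a strong generating set for $\Omega^{\text{ch}}(U)$ by \eqref{cdr:typical}; hence $q$ is surjective, which proves the strong generation assertion. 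A short local computation then shows that each element of \eqref{cirelations} lies in $\ker q$: the relations $1-{\rm Id}$, $:fg:-fg$, $:\nu\omega:-\nu\omega$, and $\iota_{gX}-:g\iota_X:$ follow directly from the local formulas and the triviality of the relevant contractions; $L_{gX}-:(dg)\iota_X:-:gL_X:$ follows from $L_{gX}=D(\iota_{gX})=D(:g\iota_X:)$ and the Leibniz rule for the vertex algebra derivation $D$; and $\partial g(\phi_1,\dots,\phi_n)-\sum_i:\frac{\partial g}{\partial\phi_i}\partial\phi_i:$ is the chiral chain rule, a consequence of \eqref{cdrrelations}. So $q$ descends to a surjection $\bar q\colon\mathcal{B}\to\Omega^{\text{ch}}(U)$, where $\mathcal{B}=V(R)/\mathcal{J}$ and $\mathcal{J}$ is the ideal generated by \eqref{cirelations}, and everything reduces to showing $\bar q$ is injective.

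For injectivity I would build a homomorphism $\psi\colon\Omega^{\text{ch}}(U)\to\mathcal{B}$ inverse to $\bar q$. Using the presentation of $\Omega^{\text{ch}}(U)$ described in Section~\ref{sect:cdr} (the enveloping vertex algebra of the Lie conformal algebra on $b^i,c^i,\beta^i,f$ modulo \eqref{cdrrelations}) together with the criterion of Section~\ref{sect:vertex}, it is enough to set $\psi(b^i)=\iota_{\partial_i}$, $\psi(c^i)=d\gamma^i$, $\psi(\beta^i)=L_{\partial_i}$, $\psi(f)=f$, to check that $\psi$ preserves the pairwise OPEs among these generators, and to check that $\psi$ annihilates the relations \eqref{cdrrelations}. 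The OPE check reduces via \eqref{opecdr} to $\iota_{\partial_i}(d\gamma^j)=\delta_{ij}$, $\partial_i(f)=\frac{\partial f}{\partial\gamma^i}$, $[\partial_i,\partial_j]=0$, ${\rm Lie}_{\partial_i}(d\gamma^j)=0$, and the vanishing of $\iota_Y$ and $L_Y$ for $Y=0$ (the $g=0$ cases of the second line of \eqref{cirelations}), all remaining OPEs being regular on both sides. The relation check is the observation that the three generators of \eqref{cdrrelations} are, respectively, the $\phi_i=\gamma^i$ instance of the chain-rule relation in \eqref{cirelations}, the relation $:fg:-fg$, and $1-{\rm Id}$.

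It then remains to see that $\bar q$ and $\psi$ are mutually inverse. The composite $\bar q\circ\psi$ is the identity on the generators $b^i,c^i,\beta^i,f$ of $\Omega^{\text{ch}}(U)$, since $\iota_{\partial_i},d\gamma^i,L_{\partial_i}$ map back under $\bar q$ to $b^i,c^i,\beta^i$ by the local formulas. And $\psi$ is surjective, because each generator of $\mathcal{B}$ lies in its image: $f=\psi(f)$; writing $\omega=\sum_i\omega_i\,d\gamma^i$, the first line of \eqref{cirelations} gives $\omega=\sum_i:\omega_i\,d\gamma^i:=\psi(\sum_i:\omega_i c^i:)$; writing $X=\sum_i f_i\partial_i$ and iterating $\iota_{gX}=:g\iota_X:$ gives $\iota_X=\sum_i:f_i\,\iota_{\partial_i}:=\psi(\sum_i:f_i b^i:)$; and iterating $L_{gX}=:(dg)\iota_X:+:gL_X:$ gives $L_X=\sum_i\big(:(df_i)\iota_{\partial_i}:+:f_i\,L_{\partial_i}:\big)=\psi\big(\sum_i:\beta^i f_i:+\sum_{i,j}:\frac{\partial f_j}{\partial\gamma^i}c^i b^j:\big)$. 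A surjective homomorphism $\psi$ satisfying $\bar q\circ\psi={\rm id}$ is automatically bijective with inverse $\bar q$, so $\ker q=\mathcal{J}$, which is exactly the final assertion of the theorem; the OPE relations and strong generation were obtained along the way.

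The main obstacle will be the two Wick-theorem calculations. Verifying \eqref{opecdr} in $\Omega^{\text{ch}}(U)$ is routine for $\iota_X$, $\omega$, and $f$, but the cases $L_X(z)\iota_Y(w)$ and $L_X(z)L_Y(w)$ are delicate: the quadratic term $:\frac{\partial f_j}{\partial\gamma^i}c^i b^j:$ in $L_X$ produces several contractions that must recombine precisely into $\iota_{[X,Y]}$, resp.\ $L_{[X,Y]}$, with no higher-order poles and no central term. Likewise, checking that $L_{gX}-:(dg)\iota_X:-:gL_X:$ vanishes in $\Omega^{\text{ch}}(U)$ requires careful use of the quasi-associativity correction for the normally ordered product. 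Everything else is formal once the presentation of $\Omega^{\text{ch}}(U)$ from Section~\ref{sect:cdr} is available.
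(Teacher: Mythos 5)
Your proposal is correct, and it arrives at the same structural insight as the paper's proof --- everything reduces to the coordinate presentation of $\Omega^{\text{ch}}(U)$ by $f, b^i, c^i, \beta^i$ modulo \eqref{cdrrelations} --- but your execution is substantially more elaborate than what the paper actually does. The paper's argument is two sentences: strong generation holds because \eqref{coordindep} contains $f, b^i=\iota_{\partial/\partial\gamma^i}, c^i=d\gamma^i, \beta^i=L_{\partial/\partial\gamma^i}$ as a subset, and the two ideals of relations coincide because \eqref{cdrrelations} is (an instance of) a subset of \eqref{cirelations} while every element of \eqref{cirelations} is a consequence of \eqref{cdrrelations}. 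Your route instead builds the universal enveloping vertex algebra $V(R)$ of the Lie conformal algebra determined by \eqref{opecdr}, quotients by the ideal $\mathcal{J}$ generated by \eqref{cirelations}, and exhibits mutually inverse maps $\bar q$ and $\psi$ between the quotient and $\Omega^{\text{ch}}(U)$. What this buys is a genuinely complete verification of the last clause of the theorem --- that \eqref{cirelations} generates the \emph{entire} ideal of normally ordered relations, not merely that its elements are relations --- which is exactly the point the paper's proof leaves implicit in the phrase ``are all consequences of.'' The cost is the deferred Wick computations for $L_X(z)\iota_Y(w)$ and $L_X(z)L_Y(w)$ and the quasi-associativity check for $L_{gX}-:(dg)\iota_X:-:gL_X:$, which you correctly identify as the only delicate steps; these are standard and are essentially the local computations recorded in \cite{LL}. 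Two cosmetic points: the Lie conformal algebra $R$ should be taken as the free $\mathbb{C}[\partial]$-module on $C^{\infty}(U)\oplus\Omega^1(U)\oplus{\rm Vect}(U)^{\oplus 2}$ rather than that space itself, and the ``chiral Cartan relation'' is not needed to verify the Lie conformal algebra axioms for \eqref{opecdr} (skew-symmetry and Jacobi follow from the Lie-algebra and module structures alone). Neither affects the correctness of the argument.
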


\begin{proof} For a coordinate open set with coordinates $\gamma^1,\dots, \gamma^n$, \eqref{coordindep} is a strong generating set for $\Omega^{\text{ch}}(U)$ since it contains the above generators $f \in C^{\infty}(U), b^i, c^i, \beta^i$ as a subset. Similarly, the set of relations \eqref{cirelations} are all consequences of the set \eqref{cdrrelations}, which is a subset of \eqref{cirelations}.
\end{proof}

We call an open set $U\subseteq Z$ {\it small} if $\Omega^{\text{ch}}(U)$ has the strong generating set \eqref{coordindep}. We call an open cover $\{U_{\alpha}\}$ of $Z$ a {\it small open cover} if each $U_{\alpha}$ is small. Aside from coordinate open sets, there is another type of small open set that will be useful. These are of the form $U \times \TT^m$ where $U$ is a coordinate open set, and $\TT^m$ is a torus of rank $m$. The reason such a set is small is that if $y^1,\dots, y^m$ are coordinates on $\TT^m$ defined up to shifts by $2\pi i k$ for $k\in \mathbb{Z}$, the corresponding fields $\partial y^i$, $c^i = d y^i$, $\beta^i$, and $b^i$, are globally defined. If $\pi: Z \ra M$ is a principal circle bundle, we often choose a trivializing open cover $\{V_{\alpha}\}$ for $M$ such that each $V_{\alpha}$ is a coordinate open set. Then $\{U_{\alpha} = \pi^{-1}(V_{\alpha})\}$ is a small open cover for $Z$, and each $U_{\alpha} \cong V_{\alpha} \times \TT$.

Even though $\Omega^{\text{ch}}(U)$ contains the ring of smooth functions $C^{\infty}(U)$ as the weight zero subspace, it is not a $C^{\infty}(U)$-module because of the nonassociativity of the normally ordered product. In other words, for $f,g \in C^{\infty}(U)$ and $\nu \in \Omega^{\text{ch}}(U)$, 
$$ :(fg)\nu:\  - \ :fg\nu:\ 
=  \sum_{n\geq 0}\frac{1}{(n+1)!}\big( :(\partial^{n+1} f)(g_{(n)} \nu):\ + (\partial^{n+1} g)(f_{(n)} \nu):\big)\ ,$$ and the right hand side need not vanish. 
However, $\Omega^{\text{ch}}(U)$ is a loop module over $C^{\infty}(U)$ in the sense of \cite{Borisov}, and $\Omega^{\text{ch}}_Z$ is a sheaf of loop modules over the structure sheaf $C^{\infty}$. For practical purposes it can be treated like an ordinary sheaf of $C^{\infty}$-modules since global sections can be constructed by gluing local sections using a partition of unity. We thank B. Song for explaining this to us.

\begin{remark} \label{rem:cdrlocal} Given a sheaf of vertex algebras on a manifold $M$ which is a sheaf of $C^{\infty}$-loop modules, a local but coordinate-independent description is useful for the following reason. To specify a homomorphism between two such sheaves $\cA_M \rightarrow \cB_M$, it is enough to give a vertex algebra homomorphism $\phi_{\alpha}: \cA(U_{\alpha}) \rightarrow \cB(U_{\alpha})$ which intertwines the $C^{\infty}$-loop module structures, such that $\phi_{\alpha}$, $\phi_{\beta}$ agree on the overlap $U_{\alpha} \cap U_{\beta}$. If we have coordinate-independent generators and relations for $\cA_M$ and $\cB_M$, it suffices to show that the OPEs among the generators are preserved and the ideal of relations is annihilated; the agreement on overlaps is then automatic. This applies to morphisms of sheaves of modules over such vertex algebra sheaves as well. \end{remark}

\subsection{$H$-twisted chiral de Rham complex}
Suppose that $H$ is a closed $3$-form on $Z$. Recall from \cite{LM15} that for a coordinate open set $U$, $\Omega^{\text{ch},H}(U)$ has strong generators  $\tilde{L}_X, \tilde{\iota}_X(z), \tilde{f}, \tilde{\omega}$ satisfying
\begin{equation} \label{opecdrtwist}
\begin{split}
\tilde{\iota}_X(z) \tilde{\iota}_Y(w) & \sim 0,
\\ \tilde{L}_X(z) \tilde{\iota}_Y(w) & \sim \big(\tilde{\iota}_{[X,Y]}(w) + (\widetilde{\iota_X \iota_Y H})(w)\big)(z-w)^{-1},
\\ \tilde{L}_X(z) \tilde{L}_Y(w) & \sim \big(\tilde{L}_{[X,Y]}(w) +( \widetilde{D \iota_X \iota_Y H})(w)\big)(z-w)^{-1},
\\ \tilde{L}_X(z) \tilde{\omega}(w) & \sim \widetilde{{\rm Lie}_X(\omega)}(w)(z-w)^{-1},\qquad \tilde{\iota}_X(z) \tilde{\omega}(w)  \sim \widetilde{\iota_X(\omega)}(w)(z-w)^{-1},
\\ \tilde{L}_X(z) \tilde{f}(w) & \sim \widetilde{X(f)}(w)(z-w)^{-1}, \qquad \tilde{\iota}_X(z) \tilde{f}(w)  \sim 0.
\end{split} \end{equation}
Note that $\iota_X \iota_Y H$ is a one-form $\nu \in \Omega^1(U)$, and the notation $\widetilde{\iota_X \iota_Y H}$ means $\tilde{\nu}$, and similarly for the other uses of the wide tilde notation above. The ideal of relations among these fields has the same generating set \eqref{cirelations} as the untwisted case, where each field is replaced by the tilde version. The corresponding vertex algebra sheaves are all isomorphic to the {\it untwisted} chiral de Rham sheaf.

\begin{theorem} [\cite{LM15}, Theorem 3] \label{untwisting} Let $\{U_{\alpha}\}$ be a small open cover of $Z$. Define a map $\Omega^{\rm ch}(U_{\alpha})\ra \Omega^{{\rm ch},H}U_{\alpha})$ by \begin{equation} \label{eqn:untwisting} \iota_X \mapsto \tilde{\iota}_X,\qquad L_X \mapsto \tilde{L}_X - \widetilde{\iota_X H},\qquad f \mapsto \tilde{f},\qquad \omega \mapsto \tilde{\omega}.\end{equation}This is an isomorphism of vertex algebras for each $U_{\alpha}$, and it defines a sheaf isomorphism $\Omega^{{\rm ch}}_Z \cong \Omega^{{\rm ch},H}_Z$.
\end{theorem}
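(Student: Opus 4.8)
The plan is to reduce the statement to a local calculation on the small opens of a small cover and then invoke Remark~\ref{rem:cdrlocal}. Fix a small cover $\{U_\alpha\}$ of $Z$. Using the coordinate-free description of $\Omega^{{\rm ch}}(U_\alpha)$ by the generators \eqref{coordindep} and relations \eqref{cirelations}, together with the corresponding tilde presentation of $\Omega^{{\rm ch},H}(U_\alpha)$ recorded in \eqref{opecdrtwist}, defining the map $\phi_\alpha$ by \eqref{eqn:untwisting} amounts, by Remark~\ref{rem:cdrlocal}, to two checks: (i) the OPEs \eqref{opecdr} among $\iota_X, L_X, f, \omega$ are carried to valid identities in $\Omega^{{\rm ch},H}(U_\alpha)$, and (ii) the images of the generators of the ideal \eqref{cirelations} all vanish. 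After that I would exhibit the inverse of $\phi_\alpha$, check compatibility on overlaps, and pass to sheaves.

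For (i), the OPEs in \eqref{opecdr} not involving $L_X$ are immediate, since $\iota_X$, $\omega$ and $f$ map to their tilde versions and \eqref{opecdrtwist} has the same shape for these. The content is in the OPEs involving $L_X$. Under $\phi_\alpha$ we have $L_X \mapsto \tilde{L}_X - \widetilde{\iota_X H}$, and the correction term $\widetilde{\iota_X H}$ is engineered precisely to absorb the $H$-dependent anomaly terms $\widetilde{\iota_X\iota_Y H}$ and $\widetilde{D\iota_X\iota_Y H}$ appearing in the $\tilde{L}_X$--$\tilde{\iota}_Y$ and $\tilde{L}_X$--$\tilde{L}_Y$ OPEs of \eqref{opecdrtwist}. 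Concretely I would recall from \cite{LM15} the OPEs of the $H$-correction fields with the generators --- they are tensorial, so that $\tilde{L}_Y$ acts on $\widetilde{\iota_X H}$ by the Lie derivative, $\tilde{\iota}_Y$ by contraction, $\tilde{f}$ trivially, and two such fields have trivial mutual OPE --- and then run the cancellations. In the $L_X$--$\iota_Y$ OPE the anomaly $\widetilde{\iota_X\iota_Y H}$ produced by $\tilde{L}_X$ is killed by the $-\widetilde{\iota_X H}$ term. In the $L_X$--$L_Y$ OPE the leftover $H$-dependent contributions recombine, via the identity $\iota_{[X,Y]}H = {\rm Lie}_X\iota_Y H - \iota_Y\, d\iota_X H$ --- which uses $dH=0$ --- together with $d\iota_X\iota_Y H = {\rm Lie}_X\iota_Y H - \iota_X\, d\iota_Y H$, into the single term $-\widetilde{\iota_{[X,Y]}H}$, which is exactly $\phi_\alpha(L_{[X,Y]})$. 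Finally the $L_X$--$f$ and $L_X$--$\omega$ OPEs are unchanged because $\widetilde{\iota_X H}$ has no OPE with $\tilde{f}$ or $\tilde{\omega}$. One checks along the way that only simple poles occur, so no additional terms arise.

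For (ii), the relations $1-{\rm Id}$, $:fg:-fg$, $:\nu\omega:-\nu\omega$, $\iota_{gX}-:g\iota_X:$ and the chain-rule relation of \eqref{cirelations} go to the corresponding tilde relations and hence vanish in $\Omega^{{\rm ch},H}(U_\alpha)$. The only relation that interacts with the twist is $L_{gX}-:(dg)\iota_X:-:gL_X:$: its image is the sum of the tilde version of this relation (which vanishes) and the combination $\widetilde{\iota_{gX}H}-:\tilde{g}\,\widetilde{\iota_X H}:$, which vanishes because $\iota_{gX}H = g\,\iota_X H$ and $\widetilde{g\mu}=:\tilde{g}\,\widetilde{\mu}:$ for $2$-forms $\mu$. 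The inverse of $\phi_\alpha$ is given by the manifestly inverse formulas $\tilde{\iota}_X\mapsto\iota_X$, $\tilde{L}_X\mapsto L_X+\iota_X H$, $\tilde{f}\mapsto f$, $\tilde{\omega}\mapsto\omega$; a symmetric computation shows this is a homomorphism, and both composites act as the identity on generators, so $\phi_\alpha$ is an isomorphism. Finally, since \eqref{eqn:untwisting} is written purely in terms of the coordinate-free data $f,\omega,\iota_X,L_X$ over $U_\alpha$ and the globally defined form $H$, the coordinate-independence of the presentation forces $\phi_\alpha$ and $\phi_\beta$ to agree on $U_\alpha\cap U_\beta$; by Remark~\ref{rem:cdrlocal} the $\phi_\alpha$ glue to an isomorphism of sheaves $\Omega^{{\rm ch}}_Z\to\Omega^{{\rm ch},H}_Z$, independent of the cover because any two small covers have a common small refinement on which all the maps agree by the same argument.

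The main obstacle is step (i). First one needs in hand the exact OPE and normally-ordered relations of the $H$-correction field $\widetilde{\iota_X H}$ and of the anomaly fields $\widetilde{\iota_X\iota_Y H}$, $\widetilde{D\iota_X\iota_Y H}$ --- recalled from \cite{LM15} --- and then one must verify that, after subtracting $\widetilde{\iota_X H}$, the twisted OPEs \eqref{opecdrtwist} reduce \emph{exactly} to the untwisted OPEs \eqref{opecdr}. The calculation is short but unforgiving about signs and pole orders, and its one genuinely non-formal ingredient is the closedness $dH=0$, which is precisely what makes the $L_X$--$L_Y$ anomaly cancel.
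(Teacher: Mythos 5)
Your proposal is correct and follows exactly the generators-and-relations strategy that the paper itself prescribes (Remark \ref{rem:cdrlocal}); note that the paper gives no proof of this theorem here, merely citing Theorem 3 of \cite{LM15}, where the verification proceeds along the same lines you describe. The one computation carrying real content --- the cancellation of the anomaly terms $\widetilde{\iota_X\iota_Y H}$ and $\widetilde{D\iota_X\iota_Y H}$ in the $L_X$--$\iota_Y$ and $L_X$--$L_Y$ OPEs via the Cartan identities and $dH=0$ --- is identified and handled correctly in your sketch.
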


For the rest of this paper, we will work with the twisted version $\Omega^{\text{ch},H}(U)$, and for simplicity of notation we shall drop the tilde symbols. Note that the chiral de Rham differential $D$ acts on the generators of $\Omega^{\text{ch},H}(U)$ as follows:
$$D(f) = df,\qquad D(\omega) = d\omega,\qquad D(\iota_X) = L_X - \iota_X H,\qquad D(L_X) = L_X H.$$

\begin{remark} \label{rem:homogeneous} Note that in $\Omega^{\text{ch},H}(U)$, the generators $f, \omega$ are homogeneous of weight zero, and $\iota_X$ is homogeneneous of weight one. However, $L_X$ is not homogeneous with respect to the conformal weight grading, but must be replaced with the element $L_X - \iota_X H$, which is homogeneous of weight $1$.
\end{remark}

\section{Fourier decomposition}
Suppose now that $Z$ is a principal circle bundle over $M$, with circle denoted by $\mathbb{T}$, which we denote by $\pi: Z \rightarrow M$. Let $H$ be an integral closed $3$-form on $Z$. By averaging over $\TT$, we may assume without loss of generality that $H$ is $\TT$-invariant since this does not change the cohomology class $[H]$. Let $\nu$ denote the vector field infintesimally generated by $\mathbb{T}$, and fix a connection form $A \in \Omega^1(Z)$, normalized so $\iota_{\nu} A= 1$. By abuse of notation, we often denote $\iota_{\nu}$ by $\iota_A$ in order to emphasize the duality between the vector field and connection form. We will denote the even and odd fields in $\Omega^{\text{ch},H}(Z)$ corresponding to $\nu$ by $L_A$ and $\iota_A$, respectively.

Since $H$ is $\mathbb{T}$-invariant, we may write $H = H^{3} + A \wedge H^{2}$ where $H^{3}, H^{2}$ are basic forms, that is, elements of $\pi^*(\Omega(M))$. For each open set $U \subseteq Z$, define $$\Omega^{\text{ch},H}_n(U) = \{\alpha \in \Omega_{\text{ch},H}(U)|\ (L_A)_{0}(\alpha) = n \alpha\}.$$ Then $$\Omega^{\text{ch},H}(U) \cong \bigoplus_{n\in \mathbb{Z}} \Omega^{\text{ch},H}_n(U),$$ where the direct sum denotes the Fr\'echet space completion of the ordinary direct sum. Choose a trivializing open cover $\{V_{\alpha}\}$ for $M$ such that each $V_{\alpha}$ is a coordinate open set. Then $\{U_{\alpha} = \pi^{-1}(V_{\alpha})\}$ is a small open cover for $Z$, and each $U_{\alpha} \cong V_{\alpha} \times \TT$.

Note that $\Omega^{\text{ch},H}_0(U_{\alpha}) \cong \Omega^{\text{ch},H}(U_{\alpha})^{\mathbb{T}}$ and each weight space $\Omega^{\text{ch},H}_n(U_{\alpha})$ for the action of $\TT$ is a module over $\Omega^{\text{ch},H}_0(U_{\alpha})$. Moreover, $\Omega^{\text{ch},H}_0(U_{\alpha})$ has the strong generating set 
$$\{\iota_X, L_X, f, \omega, A, \Gamma^A|\ X \in {\rm Vect}_{\text{hor}}(U_{\alpha}),\ f \in \pi^*(C^{\infty}(V_{\alpha})),\ \omega \in \pi^*(\Omega^1(V_{\alpha}))\},$$
described in \cite{LM15}. In this notation, ${\rm Vect}_{\text{hor}}(U_{\alpha}) = \{X\in \text{Vect}(U_{\alpha})|\ \iota_X(A) = 0\}$ is the set of horizontal vector fields, and $\Gamma^A = G_{(0)} A = G_{(1)} \partial A$, which has degree zero and weight $1$. Note that $G$ has weight $2$, so in our earlier notation \eqref{shiftofindex} this is written as $\Gamma^A = G_{-1} A = G_{0} \partial A$. Recall that $D \Gamma^A = \partial A -\xi^A$, where $\xi^A$ has degree $1$, weight $1$, and satisfies $D \xi^A = \partial DA = D \partial A$. Also, recall that $\xi^A$ lies in the subalgebra of $\Omega^{\text{ch},H}(U_{\alpha})$ generated by $\pi^*(\Omega(V_{\alpha}))$, and in particular commutes with both $\iota_A$ and $L_A$. For convenience we recall the OPEs among the generators of $\Omega^{\text{ch},H}_0(U)$.
\begin{equation} \begin{split} \label{cdr:opealgebra}
 & L_X(z) \iota_Y(w)  \sim \big(\iota_{[X,Y]} + \iota_X \iota_Y H^3 + :A (\iota_X \iota_Y H^2): + :(\iota_X \iota_Y \widehat{H}^{2}) \iota_A:\big)(z-w)^{-1},
 \\ & L_X(z) L_Y(w)  \sim \big(L_{[X,Y]}+ L_X \iota_Y H^3 - \iota_X L_Y H^3 + :\widehat{H}^2 (\iota_X \iota_Y H^2): - :A (L_X \iota_Y H^2):
 \\ & + :A (\iota_X L_Y H^2): + :(L_X \iota_Y \widehat{H}^2) \iota_A: - :(\iota_X L_Y \widehat{H}^2) \iota_A:
 + : L_A (\iota_X \iota_Y \widehat{H}^{2}):\big)(w)(z-w)^{-1},
\\ & L_X(z) \omega(w)  \sim {\rm Lie}_X(\omega)(w)(z-w)^{-1}, \qquad  L_X(z) f(w)  \sim X(f)(w)(z-w)^{-1},
\\ & \iota_X(z) \omega(w)  \sim (\iota_X \omega)(w)(z-w)^{-1}, \qquad  \iota_X(z) f(w)  \sim 0,
\\ & L_X(z) A(w)  \sim (\iota_X \widehat{H}^2)(w)(z-w)^{-1}, \qquad L_X(z) \iota_A(w) \sim (\iota_X H^2)(w)(z-w)^{-1},
\\ & L_X(z) \Gamma^A(w)  \sim -(\iota_X \xi^A)(w)(z-w)^{-1}, \qquad \iota_X (z) \Gamma^A(w) \sim 0,
\\ & L_A(z) \Gamma^A(w) \sim (z-w)^{-2}, \qquad  \iota_A(z) A(w) \sim (z-w)^{-1},
\\ & L_A(z) \iota_X(w) \sim - (\iota_X H^2)(w)(z-w)^{-1},\qquad L_A(x) L_X(w) \sim - (\iota_X H^2)(w)(z-w)^{-1}.
 \end{split}
\end{equation}
Elements of $\Omega^{\text{ch},H}_n(U)$ may be described locally as follows. If $\theta_{\alpha}$ is a coordinate on $\TT$ which is defined up to shifts by $2\pi i k$ for $k\in \mathbb{Z}$, then the function $e^{n\theta_{\alpha}}$ in local coordinates lies in $\Omega^{\text{ch},H}_n(U_{\alpha})$; moreover, elements of $\Omega^{\text{ch},H}_n(U)$ are all of the form $:e^{n \theta_{\alpha}}\alpha:$ for some $\alpha \in \Omega^{\text{ch},H}_0(U_{\alpha})$. We have the following additional OPE relations.
\begin{equation} \label{cdr:additional} \begin{split}
& L_X (z) e^{n \theta_{\alpha}}(w)  \sim 0,\qquad \iota_X (z) e^{n \theta_{\alpha}}(w) \sim 0,
\\ & L_A(z) e^{n \theta_{\alpha}}(w) \sim  n e^{n \theta_{\alpha}}(w)(z-w)^{-1}, \qquad  \iota_A(z) e^{n \theta_{\alpha}}(w) \sim 0.
\\ & e^{n \theta_{\alpha}}(z) e^{m \theta_{\alpha}}(w) \sim 0, \ \text{for all}\ n,m.
\end{split} \end{equation}
These follow from the OPE relations \eqref{cdr:opealgebra} in $\Omega^{\text{ch},H}(U_{\alpha})$.

\section{Exotic twisted chiral de Rham complex}
As above, let $\pi: Z \rightarrow M$ be a principal $\TT$-bundle with flux form $H$, which we may assume to be $\TT$-invariant, and let $A \in \Omega^1(Z)$ be a connection form normalized so that $\iota_A A = 1$. Recall that $\iota_A$ means the contraction $\iota_{\nu}$ along the vector field $\nu$ infinitesimally generated by $\TT$. Let $\widehat{\pi}: \widehat{Z} \rightarrow M$ be the T-dual principal $\widehat{\TT}$-bundle with $\widehat{\TT}$-invariant flux form $\widehat{H}$, and fix a connection form $\widehat{A}\in \Omega^1(\widehat{Z})$ normalized so that $\iota_{\widehat{A}} \widehat{A} = 1$. Again, by abuse of notation $\iota_{\widehat{A}}$ means the contraction $\iota_{\widehat{\nu}}$ along the vector field $\widehat{\nu}$ infinitesimally generated by $\widehat{\TT}$.

Next, fix an open cover $\{V_{\alpha}\}$ for $M$ which trivializes both circle bundles, such that each $V_{\alpha}$ is a coordinate open set on $M$. Then $\{U_{\alpha} = \pi^{-1}(V_{\alpha})\}$ and $\{\widehat{U}_{\alpha} = \widehat{\pi}^{-1}(V_{\alpha})\}$ are small open covers for $Z$ and $\widehat{Z}$, respectively. Since $H$ is $\TT$-invariant, it can be written in the form $H=H^{3} + A \wedge H^{2}$ where $H^{3} \in \pi^* (\Omega^3(M))$ and $H^{2}\in \pi^*(\Omega^2(M))$. Similarly, since $\widehat{H}$ is $\widehat{\TT}$-invariant it can written as $\widehat{H} = \widehat{H}^{3} + \widehat{A} \wedge \widehat{H}^{2}$ where $\widehat{H}^{3} \in \widehat{\pi}^*(\Omega^3(M))$ and $\tilde{H}^{2} \in \widehat{\pi}^*(\Omega^2(M))$. By Equations (1.10) and (1.11) of \cite{BEM}, we can assume that $$H^{3} = \widehat{H}^{3},\qquad H^{2} =  d \widehat{A} = F_{\widehat{A}},\qquad \widehat{H}^{2} = dA = F_A,$$ where $F_A$ and $F_{\widehat{A}}$ denote the curvature forms associated to $A$ and $\widehat{A}$.

Next, let $L$ be the line bundle on $M$ associated to the circle bundle $Z$. We may write the connection form $A \in \Omega^1(Z)$ locally in the form $$A_{\alpha} = A_{\alpha,\text{bas}} + d \theta_{\alpha},$$ where $A_{\alpha,\text{bas}}$ is a basic $1$-form, and hence can be identified with an element of $\Omega^1(V_{\alpha})$. By abuse of notation, we denote this element by $A_{\alpha,\text{bas}}$ as well. 

For a local section $g$ of $L$ over $V_{\alpha}$, we can regard $g$ as a function $g: V_{\alpha}  \rightarrow \mathbb{R}$, and we have the covariant derivative $$\nabla^{L} (g)  = d g + A_{\alpha, \text{bas}} \wedge g.$$ Here $d$ is the de Rham differential on $M$. Finally, we fix a local nonvanishing section $s_{\alpha}$ which is constant along $V_{\alpha}$. For each $n\in \mathbb{Z}$, the $n$th tensor power $L^{\otimes n}$ has connection form locally given by $n A_{\alpha} = n A_{\alpha,\text{bas}} + n d \theta_{\alpha}$, and given a local section $g$ of $L^{\otimes n}$ over $M$, we have 
\begin{equation} \label{nabla} \nabla^{L^{\otimes n}} (g)  = d g + n A_{\alpha, \text{bas}} \wedge g.\end{equation} Also, $s_{\alpha}^n$ is a locally constant nowhere vanishing section of $L^{\otimes n}$. We use the same notation $s_{\alpha}^n$ to denote the section $\widehat{\pi}^*(s_{\alpha}^n)$ of $\widehat{\pi}^*(L^{\otimes n})$ over $\widehat{Z}$, when no confusion can arise.

We now define the exotic $\widehat{H}$-twisted chiral de Rham sheaf $\mathcal{A}^{\text{ch}, \widehat{H}}_{\widehat{Z}}$ on $\widehat{Z}$. We first define it locally by writing strong generators, OPE relations among the generators, and specifying the ideal of normally ordered relations among the generators. For each $V_{\alpha}$, we then write down an explicit isomorphism $$\Omega^{\text{ch}, H}_n(U_{\alpha}) \rightarrow \mathcal{A}^{\text{ch}, \widehat{H}}_{-n}(\widehat{U}_{\alpha}).$$ This is enough to get the isomorphism of vertex algebra sheaves on $M$, 
$$\pi_*(\Omega^{\text{ch}, H}_Z) \rightarrow \widehat{\pi}_*(\mathcal{A}^{\text{ch}, \widehat{H}}_{\widehat{Z}}).$$

Recall the set of horizontal vector fields ${\rm Vect}_{\text{hor}}(\widehat{U}_{\alpha}) = \{X\in \text{Vect}(\widehat{U}_{\alpha})|\ \iota_X(\widehat{A}) = 0\}$. First, for $n = 0$ we declare that $\mathcal{A}^{\text{ch}, \widehat{H}}_0(\widehat{U}_{\alpha})$ has strong generators 
$$\{L_X, \iota_X, \widehat{A}, \iota_{\widehat{A}}, L_A, \Gamma^A, f, \omega| \ X \in {\rm Vect}_{\text{hor}}(\widehat{U}_{\alpha}),\ f \in \widehat{\pi}^*(C^{\infty}(M)),\ \omega \in \widehat{\pi}^*(\Omega^1(M))\},$$ which satisfy OPE relations
\begin{equation} \label{ope:exoticcdr} \begin{split}
L_X(z) \iota_Y(w) & \sim \big(\iota_{[X,Y]} + \iota_X \iota_Y H^3 + :\widehat{A} (\iota_X \iota_Y \widehat{H}^2):    \big)(z-w)^{-1},
\\ L_X(z) L_Y(w) & \sim \big(L_{[X,Y]}  + L_X \iota_Y H^3 - \iota_X L_Y H^3 + : \widehat{H}^{2} (\iota_X \iota_Y H^{2}): + :H^2 (\iota_X \iota_Y \widehat{H}^2):
\\ & + :(L_X \iota_Y \widehat{H}^2) \widehat{A} : -  : (\iota_X L_Y \widehat{H}^2)\widehat{A}: +:L_A (\iota_X \iota_Y \widehat{H}^2): \big)(w)(z-w)^{-1},
\\ L_X(z) \omega(w) & \sim {\rm Lie}_X(\omega)(w)(z-w)^{-1}, \qquad   L_X(z) f(w)  \sim X(f)(w)(z-w)^{-1},
\\ \iota_X(z) \omega(w)  & \sim (\iota_X \omega)(w)(z-w)^{-1}, \qquad  \iota_X(z) f(w)  \sim 0,
\\ L_X(z) \widehat{A}(w) & \sim 0, \qquad L_X(z) \iota_{\widehat{A}}(w) \sim (\iota_X \widehat{H}^2)(w)(z-w)^{-1},
\\ L_A(z) \Gamma^A(w) & \sim (z-w)^{-2}, \qquad \iota_{\widehat{A}}(z) \widehat{A}(w) \sim (z-w)^{-1},
\\ L_X(z) \Gamma^A(w)  & \sim -(\iota_X \xi^A)(w)(z-w)^{-1}, \qquad \iota_X (z) \Gamma^A(w) \sim 0,
\\ L_A(z) \iota_X(w) & \sim 0, \qquad  L_A(x) L_X(w) \sim 0.
\end{split} \end{equation} 
The ideal of relations among these fields has the same generating set \eqref{cirelations}.

It is not immediately apparent that this structure defined by writing down generating fields and specifying OPE relations and normally ordered relations, leads to a vertex algebra. There is a general method for constructing vertex algebras starting from fields and OPE relations that is given by De Sole and Kac in \cite{DSK} in the language of $\lambda$-brackets, and it is translated into the language of OPEs in \cite{L}. Briefly, the universal enveloping vertex algebra associated to an OPE algebra can always be defined, although it may be trivial. In our case, the universal enveloping vertex algebra associated to the OPE algebra given by \eqref{ope:exoticcdr} is freely generated by these fields since in the notation of \cite{L}, all Jacobi identities (2.10) hold as consequences of equations (2.6)-(2.9) of \cite{L}. Therefore $\mathcal{A}^{\text{ch}, \widehat{H}}_0(\widehat{U}_{\alpha})$ is well-defined as a quotient of this structure by the relations generated by  \eqref{cirelations}, and in particular is a vertex algebra.

\begin{lemma} \label{lem:degreezeroiso} For each index $\alpha$, define a map $\phi^{\rm ch}_0: \Omega^{{\rm ch}, H}_0(U_{\alpha}) \rightarrow \mathcal{A}^{{\rm ch}, \widehat{H}}_0(\widehat{U}_{\alpha})$ by
\begin{equation} \begin{split} \label{tau0} & f \mapsto f, \qquad \omega \mapsto \omega,\qquad L_X \mapsto L_X -  :\iota_{\widehat{A}} (\iota_X H^{2}):, \qquad \iota_X \mapsto \iota_X,  
\\ & A \mapsto \iota_{\widehat{A}},\qquad \iota_A \mapsto \widehat{A},\qquad L_A \mapsto L_A +H^{2},\qquad \Gamma^A \mapsto \Gamma^A.\end{split} \end{equation} This map preserves OPE relations as well as the ideal of relations, so it determines a vertex algebra isomorphism. Moreover, $\phi^{{\rm ch}}_0$ induces an isomorphism of sheaves of vertex algebras on $M$,
\begin{equation}  \phi^{{\rm ch}}_0: \pi_* \big((\Omega^{{\rm ch}, H}_0)_{Z}\big) \rightarrow \widehat{\pi}_* \big((\mathcal{A}^{{\rm ch}, \widehat{H}}_0)_{\widehat{Z}}\big),\end{equation} Taking global sections, we get a vertex algebra isomorphism
\begin{equation}  \phi^{\rm ch}_0: \Omega^{{\rm ch}, H}_0(Z) \rightarrow \mathcal{A}^{{\rm ch}, \widehat{H}}_0(\widehat{Z}).\end{equation}
\end{lemma}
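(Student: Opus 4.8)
The plan is to invoke the presentation principle stated after the definition of a vertex algebra and in Remark~\ref{rem:cdrlocal}: since $\Omega^{{\rm ch},H}_0(U_\alpha)$ is presented by its strong generators together with its ideal of normally ordered relations (which, as in the untwisted case, is generated by the horizontal version of \eqref{cirelations}), in order to obtain the homomorphism $\phi^{\rm ch}_0$ it suffices to show (i) that the OPEs \eqref{cdr:opealgebra} among the generators go, under the substitution \eqref{tau0}, to valid OPE identities in $\mathcal{A}^{{\rm ch},\widehat{H}}_0(\widehat{U}_\alpha)$ computed from \eqref{ope:exoticcdr}; and (ii) that the image under \eqref{tau0} of each generator of the relation ideal of $\Omega^{{\rm ch},H}_0(U_\alpha)$ vanishes in $\mathcal{A}^{{\rm ch},\widehat{H}}_0(\widehat{U}_\alpha)$. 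For the isomorphism statement I would then (iii) exhibit the evident candidate inverse $\psi\colon \mathcal{A}^{{\rm ch},\widehat{H}}_0(\widehat{U}_\alpha)\to \Omega^{{\rm ch},H}_0(U_\alpha)$, namely $L_X\mapsto L_X + :A(\iota_X H^{2}):$, $\iota_X\mapsto \iota_X$, $\widehat{A}\mapsto \iota_A$, $\iota_{\widehat{A}}\mapsto A$, $L_A\mapsto L_A - H^{2}$, $\Gamma^A\mapsto \Gamma^A$, $f\mapsto f$, $\omega\mapsto \omega$, verify by the same method that $\psi$ is a vertex algebra homomorphism, and check on generators that $\psi\circ\phi^{\rm ch}_0$ and $\phi^{\rm ch}_0\circ\psi$ are the identity. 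Finally, because the generators occurring in \eqref{tau0} are all defined independently of a choice of coordinates and of trivialization (horizontal vector fields, basic functions and forms, the fixed connections $A,\widehat{A}$ and the associated fields $\iota_A, L_A, \Gamma^A, \iota_{\widehat{A}}$, and the basic form $H^{2}$), the formula \eqref{tau0} defines the same map on $\widehat{U}_\alpha$ and $\widehat{U}_\beta$ over $V_\alpha\cap V_\beta$; Remark~\ref{rem:cdrlocal} then upgrades $\phi^{\rm ch}_0$ to the asserted isomorphism of sheaves of vertex algebras on $M$, and applying the global-sections functor gives the isomorphism $\Omega^{{\rm ch},H}_0(Z)\to \mathcal{A}^{{\rm ch},\widehat{H}}_0(\widehat{Z})$.

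For step (i), the substitution \eqref{tau0} acts as the identity on $f,\omega,\iota_X,\Gamma^A$, as the transposition $A\leftrightarrow \iota_A$, $\widehat{A}\leftrightarrow \iota_{\widehat{A}}$ on the ``abelian'' generators attached to the circle directions, and it twists $L_X\mapsto L_X - :\iota_{\widehat{A}}(\iota_X H^{2}):$ and $L_A\mapsto L_A + H^{2}$. The OPEs in \eqref{cdr:opealgebra} with no $L_X$- or $L_A$-factor, together with $L_X(z)\Gamma^A(w)$, $\iota_X(z)\Gamma^A(w)$ and $L_A(z)\Gamma^A(w)$, match \eqref{ope:exoticcdr} at once after the substitution: for instance $\iota_A(z)A(w)\sim (z-w)^{-1}$ becomes $\widehat{A}(z)\iota_{\widehat{A}}(w)\sim (z-w)^{-1}$, which is $\iota_{\widehat{A}}(z)\widehat{A}(w)\sim (z-w)^{-1}$ read through skew-symmetry, and the twisting terms $:\iota_{\widehat{A}}(\iota_X H^{2}):$ and $H^{2}$ have regular OPE with $\Gamma^A$ and with basic $f,\omega$ (each of their constituents does), so they contribute nothing new. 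The substantive computations are the OPEs with left factor $L_X$ or $L_A$: one expands $\phi^{\rm ch}_0(L_X)$ and $\phi^{\rm ch}_0(L_A)$, computes with \eqref{ope:exoticcdr}, and matches the outcome against $\phi^{\rm ch}_0$ applied to the right-hand side of the corresponding relation in \eqref{cdr:opealgebra}, using throughout the identifications $H^{2}=F_{\widehat{A}}=d\widehat{A}$, $\widehat{H}^{2}=F_A=dA$, $H^{3}=\widehat{H}^{3}$ and the skew-symmetry of the OPE --- which gives, for example, $H^{2}(z)\iota_X(w)\sim -(\iota_X H^{2})(w)(z-w)^{-1}$, so that $\phi^{\rm ch}_0(L_A)(z)\iota_X(w)\sim -(\iota_X H^{2})(w)(z-w)^{-1}$, matching $\phi^{\rm ch}_0$ of the source relation $L_A(z)\iota_X(w)\sim -(\iota_X H^{2})(w)(z-w)^{-1}$. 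Note that it is precisely the vanishing $L_A(z)\iota_X(w)\sim 0$ and $L_A(z)L_X(w)\sim 0$ in the exotic OPE \eqref{ope:exoticcdr}, together with the shift $L_A\mapsto L_A+H^{2}$, that reproduces the nontrivial right-hand sides of the corresponding relations in \eqref{cdr:opealgebra}.

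Step (ii) is essentially formal. The relations $1-{\rm Id}$, $:fg:-fg$, $:\nu\omega:-\nu\omega$, $\iota_{gX}-:g\iota_X:$, and the last family of (chain-rule) relations in \eqref{cirelations} are all fixed by \eqref{tau0}, since $\phi^{\rm ch}_0$ fixes $f,\omega,\iota_X$ and preserves horizontality of vector fields, and hence they go to the corresponding relations of $\mathcal{A}^{{\rm ch},\widehat{H}}_0(\widehat{U}_\alpha)$. The only case requiring care is $L_{gX}-:(dg)\iota_X:-:gL_X:$: its image equals $\big(L_{gX}-:(dg)\iota_X:-:gL_X:\big)$ plus the combination $:g\,(:\iota_{\widehat{A}}(\iota_X H^{2}):): - :\iota_{\widehat{A}}\,(:g(\iota_X H^{2}):):$ arising from $\phi^{\rm ch}_0(:gL_X:)$ and $\phi^{\rm ch}_0(L_{gX})$ (using $\iota_{gX}H^{2}=g\,\iota_X H^{2}$). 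The first bracket is a relation of the target, and the combination vanishes because the function $g$, the field $\iota_{\widehat{A}}$ and the basic $1$-form $\iota_X H^{2}$ have pairwise regular OPEs, so that normally ordered products among them are associative and supercommutative; in particular $g$ may be moved freely into and out of $:\iota_{\widehat{A}}(\iota_X H^{2}):$.

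I expect the main obstacle to be the verification of the OPE $L_X(z)L_Y(w)$ in step (i). Both \eqref{cdr:opealgebra} and \eqref{ope:exoticcdr} present this OPE with on the order of nine terms, and after inserting the twist $L\mapsto L - :\iota_{\widehat{A}}(\iota_X H^{2}):$ one must keep track of several normally ordered products, repeatedly invoke the identifications among $H^{2},\widehat{H}^{2},H^{3}$, and --- since $\phi^{\rm ch}_0(L_X)$ is itself a normally ordered expression --- control the failure of associativity of the normally ordered product, i.e.\ show that the correction terms produced when rearranging iterated products of the form $:a(bc):$ against $:(ab)c:$ cancel. The mixed OPEs $L_X(z)\iota_Y(w)$, $L_X(z)\iota_{\widehat{A}}(w)$, $L_A(z)L_X(w)$ and $L_A(z)\iota_X(w)$ are of the same flavour but much shorter.
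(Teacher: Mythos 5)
Your proposal is correct, and for the homomorphism and sheaf-gluing parts it follows the same route as the paper: check that the substitution \eqref{tau0} carries the OPEs \eqref{cdr:opealgebra} to consequences of \eqref{ope:exoticcdr}, check that it annihilates the generators of the relation ideal \eqref{cirelations}, and then invoke Remark \ref{rem:cdrlocal} to glue over the cover. The one place where you genuinely diverge is the bijectivity argument. The paper does not construct an inverse: it observes that $\phi^{\rm ch}_0$ is surjective because it takes generators to generators, and proves injectivity by noting that $U_\alpha$ and $\widehat U_\alpha$ are small, so both $\Omega^{{\rm ch},H}_0(U_\alpha)$ and $\mathcal{A}^{{\rm ch},\widehat H}_0(\widehat U_\alpha)$ admit Poincar\'e--Birkhoff--Witt bases as modules over $C^\infty(V_\alpha)$ as in \eqref{cdr:typical}, and $\phi^{\rm ch}_0$ visibly carries one basis to the other. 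Your alternative --- writing down the candidate inverse $\psi$ with $L_X\mapsto L_X+:A(\iota_X H^2):$, $L_A\mapsto L_A-H^2$, $\widehat A\leftrightarrow\iota_A$, $\iota_{\widehat A}\leftrightarrow A$, and checking $\psi\circ\phi^{\rm ch}_0=\phi^{\rm ch}_0\circ\psi=\mathrm{id}$ on generators --- is equally valid, since the paper explicitly constructs $\mathcal{A}^{{\rm ch},\widehat H}_0(\widehat U_\alpha)$ as a quotient of a universal enveloping vertex algebra by the ideal generated by \eqref{cirelations}, so the presentation principle applies in both directions. The trade-off is that your route obliges you to run the OPE and relation verifications a second time for $\psi$, whereas the PBW argument gets injectivity essentially for free from the freeness of both sides as $C^\infty(V_\alpha)$-modules; on the other hand, your route produces the inverse explicitly, which the paper later needs anyway (it is $\widehat\psi^{\rm ch}$ restricted to the degree-zero sector). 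Your treatment of the relation $L_{gX}-:(dg)\iota_X:-:gL_X:$, including the observation that the quasi-associativity corrections vanish because $g$, $\iota_{\widehat A}$ and the basic form $\iota_X H^2$ have pairwise regular OPEs, is a correct and welcome elaboration of a point the paper leaves implicit in the phrase \lq\lq straightforward to verify."
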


\begin{proof}
The fact that the map $\phi^{\rm ch}_0$ given by \eqref{tau0} preserves OPE relations is straightforward to verify using the OPE relations \eqref{cdr:opealgebra} and \eqref{ope:exoticcdr}. It is surjective since it takes generators to generators. To see that $\phi^{\text{ch}}_0$ is injective, recall that $V_{\alpha}$, $U_{\alpha}$, and $\widehat{U}_{\alpha}$ are small open sets. Therefore we may choose local coordinates such that $\Omega^{\text{ch}, H}_0(U_{\alpha})$ and $\mathcal{A}^{\text{ch}, \widehat{H}}_0(\widehat{U}_{\alpha})$ both admit bases consisting of Poincar\'e-Birkhoff-Witt monomials in the coordinate one-forms, contraction operators, vector fields, and the derivatives of coordinate functions as in \eqref{cdr:typical}, as modules over $C^{\infty}(V_{\alpha})$. Clearly $\phi^{\text{ch}}_0$ maps a basis to a basis, so it must be injective. Finally, the fact that $\phi^{\text{ch}}_0$ induces an morphism of vertex algebra sheaves on $M$ (which then must be an isomorphism), follows from Remark \ref{rem:cdrlocal}. \end{proof}

\begin{remark} Recall the vertex algebra
\begin{equation} \label{oldalgebra} \bigg(\Omega^{\text{ch}, \widehat{H}} (\widehat{U}_{\alpha})^{ i\mathbb{R}[t]} / \langle L_{\widehat{A}} - \widehat{H}^2 \rangle \bigg) \otimes \mathcal{H}(2),\end{equation} 
defined in \cite{LM15}, where $\mathcal{H}(2)$ is the rank $2$ Heisenberg vertex algebra with generators $L_A, \Gamma^A$ satisfying 
$$L_A(z) \Gamma^A(w) \sim (z-w)^{-2}.$$ The generators of $\mathcal{A}^{\text{ch}, \widehat{H}}_0(\widehat{U}_{\alpha})$ are the same as the generators of \eqref{oldalgebra} but the OPE algebras are different. So these structures coincide as vector spaces but not as vertex algebras.\end{remark}

\begin{remark} $\mathcal{A}^{\text{ch}, \widehat{H}}_0(\widehat{U}_{\alpha})$ has an action of $i \mathbb{R}[t]$ given by the modes $\{(L_A)_{(k)}|\ k\geq 0\}$, and the space $\mathcal{A}^{\text{ch}, \widehat{H}}_0(\widehat{U}_{\alpha})^{i\mathbb{R}[t]}$ is the subalgebra generated by the above generators except for $\Gamma^A$. 
\end{remark}

Next, for each $n \neq 0$, we define $\mathcal{A}^{\text{ch}, \widehat{H}}_n(\widehat{U}_{\alpha})$ to be a module over $\mathcal{A}^{\text{ch}, \widehat{H}}_0(\widehat{U}_{\alpha})$ with generator $s^n_{\alpha}$, which commutes with all generators of $\mathcal{A}^{\text{ch}, \widehat{H}}_0(\widehat{U}_{\alpha})$ except for $L_A$, and satisfies
\begin{equation} \label{ope:exotic} L_A(z) s^n_{\alpha}(w) \sim -n s^n_{\alpha}(w)(z-w)^{-1}.\end{equation}
Additionally, we declare that for all $n,m \neq 0$,
\begin{equation} \label{additional:exotic} \begin{split}  s^n_{\alpha} (z)  s^m_{\alpha}(w) & \sim 0,
\\ :s^n_{\alpha} s^m_{\alpha}: \  & = s^{n+m}_{\alpha},
\\ \partial s^n_{\alpha} & = - n :s^n_{\alpha}(\partial A - \partial A_{\alpha, \text{bas}}):.
\end{split} \end{equation} It follows that any element of $\mathcal{A}^{\text{ch}, \widehat{H}}_n(\widehat{U}_{\alpha})$ can be expressed in the form $:s^n_{\alpha} \eta:$ for some $\eta \in \mathcal{A}^{\text{ch}, \widehat{H}}_0(\widehat{U}_{\alpha})$. We now define
\begin{equation} \phi^{\text{ch}}_n: \Omega^{\text{ch}, H}_{-n}(U_{\alpha}) \rightarrow \mathcal{A}^{\text{ch}, \widehat{H}}_{n}(\widehat{U}_{\alpha})\end{equation} inductively as follows
\begin{equation} \begin{split} & \phi^{\text{ch}}_n(e^{-n\theta_{\alpha}}) =  s^{n}_{\alpha},
\\ & \phi^{\text{ch}}_n(\nu_{(k)}(e^{-n\theta_{\alpha}})) = (\phi^{\text{ch}}_0(\nu))_{(k)}(s^{n}_{\alpha}), \ \text{for all} \ n, k \in \mathbb{Z}, \ \text{and} \ \nu \in \Omega^{\text{ch}, H}_{0}(U_{\alpha}).\end{split} \end{equation} 
In particular, the $\Omega^{\text{ch}, H}_0(U_{\alpha})$-module structure on $\Omega^{\text{ch}, H}_{-n}(U_{\alpha})$ and the $\mathcal{A}^{\text{ch}, \widehat{H}}_{0}(\widehat{U}_{\alpha})$-module structure on $\mathcal{A}^{\text{ch}, \widehat{H}}_{n}(\widehat{U}_{\alpha})$, are intertwined by $\phi^{\text{ch}}_n$, i.e.,
\begin{equation} \label{modulecompatible} \phi^{\text{ch}}_n(\nu_{(k)} \omega) = (\phi^{\text{ch}}_0(\nu))_{(k)} (\phi^{\text{ch}}_n(\omega)), \ \text{for all} \ n,k \in \mathbb{Z},\ \nu \in \Omega^{\text{ch}, H}_{0}(U_{\alpha}) \ \text{and} \ \omega \in \Omega^{\text{ch}, H}_{-n}(U_{\alpha}). \end{equation}
Note that since we have not assigned $\mathcal{A}^{\text{ch}, \widehat{H}}_0(\widehat{U}_{\alpha})$ a weight grading, we must use the notation $\eta_{(k)}$ rather than $\eta_k$ for $\eta \in \mathcal{A}^{\text{ch}, \widehat{H}}_0(\widehat{U}_{\alpha})$.

We now define the exotic chiral de Rham complex
\begin{equation} \cA^{\text{ch}, \widehat{H}}(\widehat{U}_{\alpha}) = \bigoplus_{n\in \mathbb{Z}} \mathcal{A}^{\text{ch}, \widehat{H}}_n(\widehat{U}_{\alpha}),\end{equation} where as usual this means the Fr\'echet space completion of the usual direct sum. We give $\mathcal{A}^{\text{ch}, \widehat{H}}(\widehat{U_{\alpha}})$ a filtration 
\begin{equation} \label{def:wtfiltration} \mathcal{A}^{\text{ch}, \widehat{H}}(\widehat{U_{\alpha}})_{[0]} \subseteq \mathcal{A}^{\text{ch}, \widehat{H}}(\widehat{U_{\alpha}})_{[1]} \subseteq \mathcal{A}^{\text{ch}, \widehat{H}}(\widehat{U_{\alpha}})_{[2]} \cdots, \qquad \mathcal{A}^{\text{ch}, \widehat{H}}(\widehat{U_{\alpha}}) = \bigcup_{i\geq 0} \mathcal{A}^{\text{ch}, \widehat{H}}(\widehat{U_{\alpha}})_{[i]},\end{equation} which we call the {\it weight filtration}, defined on generators follows:
\begin{equation} \label{def:weight} \begin{split}
 & \text{wt}(f) = \text{wt}(\omega) = \text{wt}(\widehat{A}) = \text{wt} (s^{n}_{\alpha}) = 0,\
\\ & \text{wt}(\iota_X) = \text{wt}(L_X) = \text{wt}(L_A) = \text{wt}(\iota_{\widehat{A}} )= \text{wt}(\Gamma^A) \leq 1.
\end{split} \end{equation} In other words, $f, \omega, \widehat{A}, s^n_{\alpha}$ lie in $\mathcal{A}^{\text{ch}, \widehat{H}}(\widehat{U_{\alpha}})_{[0]}$ and $\iota_X, L_X, L_X, \iota_{\widehat{A}}$ lie in $\mathcal{A}^{\text{ch}, \widehat{H}}(\widehat{U_{\alpha}})_{[1]}$. Elements of $\mathcal{A}^{\text{ch}, \widehat{H}}(\widehat{U_{\alpha}})_{[i]}$ are said to have {\it weight at most $i$}. If $a \in \mathcal{A}^{\text{ch}, \widehat{H}}(\widehat{U_{\alpha}})_{[i]}$, we set $\partial a \in \mathcal{A}^{\text{ch}, \widehat{H}}(\widehat{U_{\alpha}})_{[i+1]}$. It is apparent from the OPE algebra \eqref{ope:exoticcdr} that if $a \in \mathcal{A}^{\text{ch}, \widehat{H}}(\widehat{U_{\alpha}})_{[i]}$ and $b \in \mathcal{A}^{\text{ch}, \widehat{H}}(\widehat{U_{\alpha}})_{[j]}$, then $a_{(k)} b \in \mathcal{A}^{\text{ch}, \widehat{H}}(\widehat{U_{\alpha}})_{[i+j-k-1]}$ for all $i,j\geq 0$. Note that the weight zero component $\mathcal{A}^{\text{ch}, \widehat{H}}_n(\widehat{U_{\alpha}})_{[0]}$ consists of linear combinations of elements of the form $:(\omega + \widehat{A} \nu) s^n_{\alpha}:$, which we can identity with the space of exotic differential forms. In particular, under coordinate transformations the element $s^n_{\alpha} \in \mathcal{A}^{\text{ch}, \widehat{H}}(\widehat{U_{\alpha}})_{[0]}$ transforms as a section of $\widehat{\pi}^* (L^{\otimes n})$.

We assemble the maps $\phi^{\text{ch}}_n$ for all $n \in \mathbb{Z}$ to construct the map
\begin{equation} \label{def:phi} \phi^{\text{ch}}: \Omega^{\text{ch}, H}(U_{\alpha}) \rightarrow \mathcal{A}^{\text{ch}, \widehat{H}}(\widehat{U}_{\alpha}),\end{equation} such that $\phi^{\text{ch}}$ restricts to $\phi^{\text{ch}}_n$ on the summand $\Omega^{\text{ch}, H}_{-n}(U_{\alpha})$. It is straightforward to check using Lemma \ref{lem:degreezeroiso} combined with \eqref{cdr:additional}, \eqref{additional:exotic}, and  \eqref{modulecompatible}, that $\phi^{\text{ch}}$ preserves all OPEs. It is bijective by the same argument as the proof of Lemma \ref{lem:degreezeroiso}. By Remark \ref{rem:cdrlocal}, we obtain

\begin{theorem} The map $\phi^{\rm ch}$ is an isomorphism of vertex algebras for each index $\alpha$. Moreover, it induces an isomorphism of sheaves of vertex algebras over $M$,
\begin{equation} \label{def:phisheaf} \phi^{\rm ch}: \pi_* \big(\Omega^{{\rm ch}, H}_{Z}\big) \rightarrow \widehat{\pi}_* \big(\mathcal{A}^{{\rm ch}, \widehat{H}}_{\widehat{Z}}\big).\end{equation} Taking global sections, we get a vertex algebra isomorphism
\begin{equation} \label{def:phiglobal} \phi^{\rm ch}: \Omega^{{\rm ch}, H}(Z) \rightarrow \mathcal{A}^{{\rm ch}, \widehat{H}}(\widehat{Z}).\end{equation}
\end{theorem}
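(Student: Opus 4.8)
The plan is to establish the three claims in order, using the local isomorphisms $\phi^{\rm ch}$ on each $\widehat{U}_\alpha$ as the basic building blocks and then globalizing via the sheaf-theoretic machinery of Remark \ref{rem:cdrlocal}. First I would verify that each local map $\phi^{\rm ch}\colon \Omega^{\text{ch},H}(U_\alpha) \to \mathcal{A}^{\text{ch},\widehat{H}}(\widehat{U}_\alpha)$ of \eqref{def:phi} is a vertex algebra isomorphism. Injectivity and surjectivity follow exactly as in Lemma \ref{lem:degreezeroiso}: since $V_\alpha$, $U_\alpha$, $\widehat{U}_\alpha$ are small, both sides have PBW-type bases over $C^\infty(V_\alpha)$ in the coordinate fields of \eqref{cdr:typical} (together with the $e^{\pm n\theta_\alpha}$, resp.\ $s^n_\alpha$, on the weight-$n$ summands), and $\phi^{\rm ch}$ sends one basis to the other; the Fréchet completions of the direct sums then match. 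That $\phi^{\rm ch}$ is a vertex algebra homomorphism reduces, by the reconstruction/strong-generation principle recalled in Section \ref{sect:vertex}, to checking two things: (i) $\phi^{\rm ch}$ preserves the pairwise OPEs among strong generators, and (ii) $\phi^{\rm ch}$ annihilates the ideal of normally ordered relations. For (ii), on the $n=0$ summand the relations are \eqref{cirelations} and are handled by Lemma \ref{lem:degreezeroiso}; on the $n\ne 0$ summands the relations are $\partial s^n_\alpha = -n{:}s^n_\alpha(\partial A - \partial A_{\alpha,\text{bas}}){:}$ and ${:}s^n_\alpha s^m_\alpha{:} = s^{n+m}_\alpha$, which must be checked against the images of the corresponding relations $\partial e^{-n\theta_\alpha} = -n{:}e^{-n\theta_\alpha}\partial\theta_\alpha{:}$ and ${:}e^{-n\theta_\alpha}e^{-m\theta_\alpha}{:} = e^{-(n+m)\theta_\alpha}$ under the inductive definition of $\phi^{\rm ch}_n$; here one uses $\phi^{\rm ch}_0(A) = \iota_{\widehat{A}}$ and bookkeeping of the basic part $A_{\alpha,\text{bas}}$ of the connection. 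For (i), the OPEs among the $n=0$ generators are again Lemma \ref{lem:degreezeroiso}, and the only new OPEs involve $s^n_\alpha$, namely \eqref{ope:exotic}, $L_A(z)s^n_\alpha(w)\sim -n s^n_\alpha(w)(z-w)^{-1}$, which matches $L_A(z)e^{-n\theta_\alpha}(w)\sim -n e^{-n\theta_\alpha}(w)(z-w)^{-1}$ under $\phi^{\rm ch}_0(L_A) = L_A + H^2$ (note $H^2(z)e^{-n\theta_\alpha}(w)\sim 0$ since $H^2$ is basic), while all other generators have trivial OPE with both $s^n_\alpha$ and $e^{-n\theta_\alpha}$.

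Second, I would upgrade these local isomorphisms to an isomorphism of sheaves of vertex algebras $\pi_*\Omega^{\text{ch},H}_Z \to \widehat{\pi}_*\mathcal{A}^{\text{ch},\widehat{H}}_{\widehat{Z}}$ over $M$. By Remark \ref{rem:cdrlocal}, since the generators \eqref{coordindep} (plus the fiber/line-bundle data) and relations \eqref{cirelations} are coordinate-independent, it suffices to check that $\phi^{\rm ch}$ on $U_\alpha$ and on $U_\beta$ agree on the overlap $\pi^{-1}(V_\alpha\cap V_\beta)$. On the $n=0$ part this is the overlap statement already contained in Lemma \ref{lem:degreezeroiso}. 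On the weight-$n$ part, the only genuinely new check is the transition behavior of the module generators: $e^{-n\theta_\alpha}$ and $e^{-n\theta_\beta}$ differ on the overlap by the transition function $g_{\alpha\beta}$ of $Z$, i.e.\ $e^{-n\theta_\alpha} = {:}g_{\alpha\beta}^{-n}e^{-n\theta_\beta}{:}$, while by construction $s^n_\alpha$ and $s^n_\beta$ differ by the same cocycle since $L^{\otimes n}$ is associated to $Z$ with the same transition data; compatibility of the differences $A_\alpha - A_\beta = d(\theta_\alpha - \theta_\beta)$ with $A_{\alpha,\text{bas}} - A_{\beta,\text{bas}} = d\log g_{\alpha\beta}$ is exactly what makes $\phi^{\rm ch}_n$ glue. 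Because $\phi^{\rm ch}$ is a local isomorphism in each degree and the weight filtration is respected, the glued sheaf map is an isomorphism.

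Third, taking global sections of the sheaf isomorphism on $M$ yields the vertex algebra isomorphism $\phi^{\rm ch}\colon \Omega^{\text{ch},H}(Z)\to\mathcal{A}^{\text{ch},\widehat{H}}(\widehat{Z})$ of \eqref{def:phiglobal}: global sections of $\pi_*\Omega^{\text{ch},H}_Z$ over $M$ are by definition the global sections of $\Omega^{\text{ch},H}_Z$ over $Z$, and similarly for $\widehat{Z}$, and the functor of global sections is exact enough on each conformal-weight (resp.\ weight-filtration) piece that an isomorphism of sheaves induces an isomorphism on sections. I expect the main obstacle to be the verification in step one that $\phi^{\rm ch}$ preserves the full OPE algebra: the $n=0$ OPEs in \eqref{ope:exoticcdr} are deliberately rigged (by twisting $L_X \mapsto L_X - {:}\iota_{\widehat{A}}(\iota_X H^2){:}$ and $L_A\mapsto L_A + H^2$) so that the complicated $L_X(z)L_Y(w)$ OPE on the $Z$ side, with its $H^3$-, $H^2$-, and $A$-dependent corrections in \eqref{cdr:opealgebra}, maps exactly to the $\widehat{H}$-side OPE in \eqref{ope:exoticcdr}, and confirming this match term-by-term — particularly the cross terms involving $\widehat{A}$, $\iota_{\widehat{A}}$, and the nonassociativity of the normally ordered product — is the delicate computation, even though conceptually it is ``just'' bilinearity of the $(n)$-products and the identities $H^2 = F_{\widehat{A}}$, $\widehat{H}^2 = F_A$, $H^3 = \widehat{H}^3$.
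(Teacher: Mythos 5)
Your proposal is correct and follows essentially the same route as the paper, which disposes of this theorem in one sentence: OPE preservation is checked directly on generators, bijectivity follows from the Poincar\'e--Birkhoff--Witt basis argument of Lemma \ref{lem:degreezeroiso}, and the sheaf statement follows from Remark \ref{rem:cdrlocal}. You simply supply more of the details (the overlap/transition-function check for $s^n_\alpha$ versus $e^{-n\theta_\alpha}$, and the term-by-term matching of the $L_X(z)L_Y(w)$ OPEs) that the paper leaves implicit.
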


\begin{remark} The structure of $\Omega^{{\rm ch}, H}_{Z}$ does not depend on our choice of connection form $A$ or flux form $H$, since it is isomorphic to the untwisted chiral de Rham complex $\Omega^{{\rm ch}}_{Z}$. Therefore the structure of $\mathcal{A}^{{\rm ch}, \widehat{H}}_{\widehat{Z}}$ also does not depend on these choices or on the choice of $\widehat{A}$ or $\widehat{H}$, although the isomorphism \eqref{def:phisheaf} does depend on these choices.
\end{remark}

\section{Chiral Han-Mathai map}
Recall that $\Omega^{\text{ch},H}(U_{\alpha})$ has weight grading $\Omega^{\text{ch},H}(U_{\alpha}) = \bigoplus_{n\geq 0} \Omega^{\text{ch},H}(U_{\alpha})[n]$, and hence has the associated weight filtration $$\Omega^{\text{ch},H}(U_{\alpha})_{[0]} \subseteq \Omega^{\text{ch},H}(U_{\alpha})_{[1]} \subseteq \Omega^{\text{ch},H}(U_{\alpha})_{[2]} \subseteq \cdots, \qquad \Omega^{\text{ch},H}(U_{\alpha}) = \bigcup_{n\geq 0} \Omega^{\text{ch},H}(U_{\alpha})_{[n]},$$ where $\Omega^{\text{ch},H}(U_{\alpha})_{[n]} = \bigoplus_{i=0}^n \Omega^{\text{ch},H}(U_{\alpha})[i]$.

We interpret the map $\phi^{\text{ch}}$ as the analogue of the Cavalcanti-Gualtieri isomorphism of Courant algebroids \cite{cavalcanti}. However, it is clear from \eqref{tau0} that $\phi^{\text{ch}}$ does not preserve the weight filtration, and does not have a degree shift, so it is not the chiralization of the Han-Mathai map $\tau: \Omega^{\bar{k}}(Z) \ra \mathcal{A}^{\overline{k + 1}}(\widehat{Z})^{\widehat{\TT}}$. To define the analogue of $\tau$, we need to regard 
$\Omega^{\text{ch},H}_{Z}$ not as a vertex algebra sheaf, but as a sheaf of {\it modules over itself}. For each $U_{\alpha}$, $\Omega^{\text{ch},H}(U_{\alpha})$ is generated by the vacuum vector $1$ as a module over itself. Similarly, we regard $\mathcal{A}^{\text{ch}, \widehat{H}}_{\widehat{Z}}$ not as a sheaf of vertex algebras, but as a sheaf of modules over itself. Both $\Omega^{\text{ch},H}(U_{\alpha})$ and  $\mathcal{A}^{\text{ch},\widehat{H}}(\widehat{U}_{\alpha})$ are $\mathbb{Z}_2$-graded, where the grading is the $\mathbb{Z}_2$-reduction of the degree grading. We shall call this $\mathbb{Z}_2$-grading the {\it degree}, and for $\bar{k} \in \mathbb{Z}_2$, we use the notation 
$$\Omega^{\text{ch},H,\bar{k}}(U_{\alpha}) = \bigoplus_{n\in \mathbb{Z}} \Omega_n^{\text{ch},H,\bar{k}}(U_{\alpha}),\qquad \mathcal{A}^{\text{ch}, \widehat{H},\bar{k}}(\widehat{U}_{\alpha}) = \bigoplus_{n\in \mathbb{Z}} \mathcal{A}_n^{\text{ch}, \widehat{H},\bar{k}}(\widehat{U}_{\alpha})$$ to denote the $\mathbb{Z}_2$-graded components, and similarly for the corresponding sheaves. Note that the map $\phi^{\text{ch}}$ defined in \eqref{def:phi} preserves this grading.

Recall that $\mathcal{A}^{\text{ch},\widehat{H}}(\widehat{U}_{\alpha})$ is only filtered by weight rather than graded, so for $\eta\in \mathcal{A}^{\text{ch},\widehat{H}}(\widehat{U}_{\alpha})$, the vertex algebra operation $\eta_{(k)}$ is well-defined, but $\eta_k$ is not. However, it will be convenient to give meaning to $\eta_k$ in the case when $\eta = \phi^{\text{ch}}(\nu)$ and $\nu \in  \Omega^{\text{ch},H}(U_{\alpha})$ is one of the weight-homogeneous generators 
$$f,\quad \omega,\quad s^n_{\alpha}, \quad \iota_X, \quad L_X-\iota_X H^3 + :A (\iota_X H^{2}):,\quad A, \quad\iota_A, \quad L_A - H^2, \quad \Gamma^A.$$ We define
\begin{equation} \begin{split} \label{phishift} & (\phi^{\text{ch}}(f))_k = f_k = f_{(k-1)}, \qquad (\phi^{\text{ch}}(\omega))_k = \omega_k = \omega_{(k-1)},
\\ & (\phi^{\text{ch}}(e^{-n\theta_{\alpha}}))_k = (s^n_{\alpha})_k = (s^n_{\alpha})_{(k-1)}, \qquad (\phi^{\text{ch}}(\iota_X))_k = (\iota_X)_k = (\iota_X)_{(k)},
\\ & (\phi^{\text{ch}}(L_X-\iota_X H^3 + :A (\iota_X H^{2}):))_k = (L_X -  \iota_X H^3)_k = (L_X -  \iota_X H^3)_{(k)}, 
\\ & (\phi^{\text{ch}}(A))_k = (\iota_{\widehat{A}})_k = (\iota_{\widehat{A}})_{(k)}, \qquad  (\phi^{\text{ch}}(\iota_A))_k = (\widehat{A})_k = (\widehat{A})_{(k-1)},
\\ & (\phi^{\text{ch}}(L_A - H^2))_k = (L_A)_k = (L_A)_{(k)},\qquad (\phi^{\text{ch}}(\Gamma^A))_k = (\Gamma^A)_k =(\Gamma^A)_{(k)} .\end{split} \end{equation} 
For each $U_{\alpha}$, we now define a linear map 
\begin{equation} \label{tchiralmodule} \tau^{\text{ch}}: \Omega^{\text{ch},H,\bar{k}}(U_{\alpha}) \rightarrow  \mathcal{A}^{\text{ch},\widehat{H},\overline{k+1}}(\widehat{U}_{\alpha}),\end{equation} 
inductively as follows:
\begin{equation} \tau^{\text{ch}}(1) = \widehat{A}, \qquad \tau^{\text{ch}}(\nu_k (\mu)) = (-1)^{|\nu|} (\phi^{\text{ch}}(\nu))_k (\tau^{\text{ch}}(\mu)).\end{equation}
Here $\nu$ is one of the weight-homogeneous generators of $\Omega^{\text{ch},H}(U_{\alpha})$ regarded as a vertex algebra, and $\mu$ lies in $\Omega^{\text{ch},H}(U_{\alpha})$ regarded as a $\Omega^{\text{ch},H}(U_{\alpha})$-module. The fact that $\tau^{\text{ch}}$ is well-defined is a consequence of the standard quasi-commutativity and quasi-associativity formulas in vertex algebra theory. We regard $\tau^{\text{ch}}$ not as a vertex algebra homomorphism, but as a homomorphism of vertex algebra modules in the sense that it intertwines that action of $\Omega^{\text{ch},H}(U_{\alpha})$ on itself, and $\mathcal{A}^{\text{ch},\widehat{H}}(\widehat{U}_{\alpha})$ on itself, via the homomorphism $\phi^{\text{ch}}$. We obtain a homomorphism of sheaves of modules on $M$ 
\begin{equation} \label{T-duality-module}\tau^{\text{ch}}: \pi_*\big( \Omega^{\text{ch},H,\bar{k}}\big)_{Z}  \rightarrow  \widehat{\pi}_*\big( \mathcal{A}^{\text{ch},\widehat{H},\overline{k+1}}\big)_{\widehat{Z}},\end{equation}  which we also denote by $\tau^{\text{ch}}$. In particular, we get a homomorphism of modules of global sections
\begin{equation} \label{T-duality-moduleglobal}\tau^{\text{ch}}: \Omega^{\text{ch},H,\bar{k}}(Z) \rightarrow  \mathcal{A}^{\text{ch},\widehat{H},\overline{k+1}}(\widehat{Z}).\end{equation}

\begin{theorem}  The map $\tau^{\rm ch}$ shifts the $\mathbb{Z}_2$-grading and preserves the weight filtration, i.e., $$\tau^{\rm ch}(\Omega^{{\rm ch},H,\bar{k}}(Z)_{[i]}) \subseteq \mathcal{A}^{{\rm ch},\widehat{H},\overline{k+1}}(\widehat{Z})_{[i]}.$$  Moreover, $\tau^{\rm ch}$ coincides at weight zero with the classical T-duality map of Han and Mathai. 
\end{theorem}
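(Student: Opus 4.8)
The plan is to prove both assertions locally on a single small set $\widehat{U}_\alpha$ and then pass to the sheaves on $M$ and to global sections via Remark~\ref{rem:cdrlocal}, since the $\mathbb{Z}_2$-grading, the weight filtration, and the classical map $\tau$ are all defined on the covering sets compatibly with restriction. Fix $\alpha$. Since $\Omega^{\text{ch},H}(U_\alpha)$ is generated as a module over itself by the vacuum, every element is a linear combination of iterated expressions $\nu^{(1)}_{k_1}\bigl(\cdots\nu^{(m)}_{k_m}(1)\cdots\bigr)$ in the weight-homogeneous generators $\nu$ listed before \eqref{phishift}, and $\tau^{\text{ch}}$ is evaluated on such an expression by iterating $\tau^{\text{ch}}(\nu_k(\mu)) = (-1)^{|\nu|}(\phi^{\text{ch}}(\nu))_k\,\tau^{\text{ch}}(\mu)$ down to $\tau^{\text{ch}}(1) = \widehat{A}$; this is legitimate because $\tau^{\text{ch}}$ has already been shown to be well defined. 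The $\mathbb{Z}_2$-grading shift is then immediate by induction on $m$: the base case is $|\widehat{A}| = 1 = |1| + 1$, and since $\phi^{\text{ch}}$ preserves the $\mathbb{Z}_2$-grading the operator $(\phi^{\text{ch}}(\nu))_k$ has parity $|\nu|$, the same as $\nu_k$, so $|\tau^{\text{ch}}(\nu_k(\mu))| = |\nu| + (|\mu|+1) = |\nu_k(\mu)| + 1$.

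The weight-filtration statement is the substantive part. The key point is that for each weight-homogeneous generator $\nu$ of $\Omega^{\text{ch},H}(U_\alpha)$, the operator $(\phi^{\text{ch}}(\nu))_k$ on $\mathcal{A}^{\text{ch},\widehat{H}}(\widehat{U}_\alpha)$ lowers the weight filtration by exactly $k$, i.e.\ it sends $\mathcal{A}^{\text{ch},\widehat{H}}(\widehat{U}_\alpha)_{[p]}$ into $\mathcal{A}^{\text{ch},\widehat{H}}(\widehat{U}_\alpha)_{[p-k]}$, matching the effect of $\nu_k$ on the weight grading of $\Omega^{\text{ch},H}(U_\alpha)$. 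I would verify this by inspecting the nine cases of \eqref{phishift}: in each one, if $\phi^{\text{ch}}(\nu)$ lies in $\mathcal{A}^{\text{ch},\widehat{H}}(\widehat{U}_\alpha)_{[w']}$, then \eqref{phishift} defines $(\phi^{\text{ch}}(\nu))_k$ to be $(\phi^{\text{ch}}(\nu))_{(k+w'-1)}$, and the rule $a_{(k)}b \in \mathcal{A}^{\text{ch},\widehat{H}}(\widehat{U}_\alpha)_{[i+j-k-1]}$ for $a$ of filtration degree $i$ and $b$ of filtration degree $j$ (recorded when \eqref{def:weight} was introduced) gives precisely the drop by $k$; in the single inhomogeneous case $\nu = L_X - \iota_X H^3 + {:}A(\iota_X H^2){:}$, with image $L_X - \iota_X H^3$, the subleading term $\iota_X H^3$ drops the filtration by $k+1$, which does not affect the conclusion. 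The point worth stressing is that the index shifts in \eqref{phishift} are exactly calibrated so that this works even though $\phi^{\text{ch}}$ itself need not preserve weight --- for instance $A \mapsto \iota_{\widehat{A}}$ raises the intrinsic weight from $0$ to $1$, but $(\phi^{\text{ch}}(A))_k = (\iota_{\widehat{A}})_{(k)}$ still drops the filtration by $k$. Granting this, a weight-$j$ element of $\Omega^{\text{ch},H}(U_\alpha)$ is a combination of expressions with $-\sum_l k_l = j$, and applying $\tau^{\text{ch}}$ moves $\widehat{A} \in \mathcal{A}^{\text{ch},\widehat{H}}(\widehat{U}_\alpha)_{[0]}$ down the filtration by $-\sum_l k_l = j$, landing in $\mathcal{A}^{\text{ch},\widehat{H}}(\widehat{U}_\alpha)_{[j]}$; summing over $j \le i$ gives $\tau^{\text{ch}}(\Omega^{\text{ch},H}(U_\alpha)_{[i]}) \subseteq \mathcal{A}^{\text{ch},\widehat{H}}(\widehat{U}_\alpha)_{[i]}$.

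For the comparison at weight zero, I would first record the identifications $\Omega^{\text{ch},H}_{-n}(Z)_{[0]} = \Omega_{-n}(Z)$ --- ordinary forms of $\TT$-weight $-n$, locally of the form $e^{-n\theta_\alpha}(\eta_1 + A\wedge\eta_2)$ with $\eta_1,\eta_2$ basic --- and $\mathcal{A}^{\text{ch},\widehat{H}}_n(\widehat{Z})_{[0]} = \Omega(\widehat{Z},\widehat{\pi}^*(L^{\otimes n}))^{\widehat{\TT}}$ via $s^n_\alpha \leftrightarrow$ trivializing section; these are precisely the source and target of $\tau_n$. Then I would evaluate $\tau^{\text{ch}}$ on the weight-zero generators $f,\omega,e^{-n\theta_\alpha},A$ by the recursion: it sends $1 \mapsto \widehat{A}$, multiplication by $f$ (resp.\ $e^{-n\theta_\alpha}$) to multiplication by $f$ (resp.\ $s^n_\alpha$), exterior multiplication by a basic $1$-form $\omega$ to $-\omega\wedge(-)$, and exterior multiplication by the connection form $A$ to $-\iota_{X_{\widehat{A}}}(-)$, the last because at weight zero the mode $(\iota_{\widehat{A}})_{(0)}$ is an odd derivation vanishing on basic forms with $\widehat{A}\mapsto 1$, hence equals the contraction $\iota_{X_{\widehat{A}}}$. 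Combining these and using $\iota_{X_{\widehat{A}}}(\widehat{A}\wedge\eta_2) = \eta_2$ for basic $\eta_2$, one finds $\tau^{\text{ch}}\bigl(e^{-n\theta_\alpha}(\eta_1 + A\wedge\eta_2)\bigr) = s^n_\alpha\bigl(\widehat{A}\wedge\eta_1 - \eta_2\bigr)$, which is the local form of the exotic Hori formula of \cite{HM18} defining $\tau_n$; hence $\tau^{\text{ch}}$ restricts at weight zero to $\bigoplus_n \tau_n$.

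The main obstacle is not any single deep step but the bookkeeping: in the filtration argument one must carefully confirm the calibration of the index shifts \eqref{phishift} against the behaviour of the $(k)$-products on the filtration, and in the weight-zero comparison one must match the signs generated by $\tau^{\text{ch}}(1) = \widehat{A}$ and the factor $(-1)^{|\nu|}$ with the sign conventions of \cite{HM18} for the exotic Hori formula --- which is, after all, exactly why those signs were built into the definition of $\tau^{\text{ch}}$.
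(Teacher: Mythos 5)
Your proposal is correct and follows essentially the same route as the paper's own proof: an induction starting from $\tau^{\rm ch}(1)=\widehat{A}$, using that $\phi^{\rm ch}$ preserves the $\mathbb{Z}_2$-degree and that the index shifts in \eqref{phishift} are calibrated so each $(\phi^{\rm ch}(\nu))_k$ drops the weight filtration by $k$, followed by a direct evaluation on weight-zero generators. You simply supply more detail than the paper does (the case-by-case check of \eqref{phishift} and the explicit local Hori formula), but the argument is the same.
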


\begin{proof} By definition, $\tau^{\text{ch}}$ preserves weight and shifts degree when applied to the vacuum $1$, since $1$ has weight $0$ and degree $\bar{0}$, and $\tau^{\text{ch}}(1) = \widehat{A}$ has weight zero and degree $\bar{1}$. Inductively, suppose that $\mu$ has weight $d$ and degree $\bar{j}$, and that $\tau^{\text{ch}}(\mu)$ has weight at most $d$ and degree $\overline{j+1}$. Then for any homogeneous generator $\nu \in \Omega^{\text{ch},H,\bar{k}}(Z)$ and $r \in \mathbb{Z}$, $\nu_r \mu$ has weight at most $d - r$ and degree $\overline{j+k}$. Since $\phi^{\text{ch}}$ preserves degree, and $\tau^{\text{ch}}(\nu_r \mu) = (-1)^{|\nu|} (\phi^{\text{ch}}(\nu))_r (\tau^{\text{ch}}(\mu))$ has weight at most $d-r$ and degree $\overline{j+k+1}$, it follows that $\tau^{\text{ch}}$ preserves the weight filtration and shifts degree. 

Note that $$\tau^{\text{ch}}(A) = \tau^{\text{ch}}(A_0 (1)) = -(\phi^{\text{ch}}(A))_0 (\tau^{\text{ch}}(1)) = -(\iota_{\widehat{A}})_0(\widehat{A}) = - 1.$$ Since $\phi^{\text{ch}}(\omega) = \omega$ for all $\omega \in \pi^*(\Omega^1(M))$, we conclude that at weight zero, $\tau^{\text{ch}}$ coincides with Han-Mathai map $\tau$.
\end{proof}

It follows from the definition of the maps \eqref{def:phi} and \eqref{tchiralmodule} that $\tau^{\text{ch}}(e^{-n\theta_{\alpha}}) = \ :s^{n}_{\alpha} \widehat{A}:$, for all $n \neq 0$. Therefore $\tau^{\text{ch}}$ maps $\Omega^{\text{ch},H,\bar{k}}_{-n}(Z)$ to $\cA^{\text{ch},\widehat{H},\overline{k+1}}_n(\widehat{Z})$. We interpret this as exchange of momentum and winding number as in the setting of \cite{HM18}.

Next, we shall define the chiral analogue of the map $\widehat{\sigma}: \cA^{\bar{k}}(\widehat{Z}) \rightarrow \Omega^{\overline{k+1}}(\widehat{Z})$. First, let
$$\widehat{\psi}^{\text{ch}}: \cA^{{\rm ch}, \widehat{H}}(\widehat{Z}) \rightarrow \Omega^{\text{ch}, H}(Z)$$ be the inverse of the vertex algebra isomorphism $\phi^{\text{ch}}$ given by \eqref{def:phi}. We define $$\widehat{\sigma}^{\text{ch}}: \cA^{\text{ch}, \widehat{H},\bar{k}}(\widehat{Z}) \rightarrow \Omega^{\text{ch},\overline{k+1}}(\widehat{Z})$$ inductively as follows:
$$\widehat{\sigma}^{\text{ch}}(1) = A, \qquad \widehat{\sigma}^{\text{ch}}(\nu_k (\mu)) = (-1)^{|\nu|} (\widehat{\psi}^{\text{ch}}(\nu))_k (\widehat{\sigma}^{\text{ch}}(\mu)).$$ Here $\nu$ is one of the generators of $\cA^{\text{ch},\widehat{H}}(\widehat{Z})$ regarded as a vertex algebra, which is the image under $\phi^{\text{ch}}$ of a weight-homogeneous generator of $\Omega^{\text{ch}, H}(Z)$;  namely, $\nu$ is either $f$, $\omega$, $s^n_{\alpha}$ (in local coordinates), $\iota_X$, $L_X - \iota_X H^3$, $L_A$, or $\Gamma^A$. Similarly, $\mu$ lies in $\cA^{\text{ch},\widehat{H}}(\widehat{Z})$ regarded as a module over $\cA^{\text{ch},\widehat{H}}(\widehat{Z})$. Reversing the roles of $Z$ and $\widehat{Z}$, we have the analogous maps 
$$\widehat{\tau}^{\text{ch}}: \Omega^{\text{ch}, \widehat{H},\bar{k}}(\widehat{Z}) \rightarrow \cA^{\text{ch}, H,\overline{k+1}}(Z),\qquad \sigma^{\text{ch}}: \cA^{\text{ch}, H,\bar{k}}(Z) \rightarrow \Omega^{\text{ch},\widehat{H},\overline{k+1}}(\widehat{Z}).$$

\begin{theorem} We have the following identities.
\begin{equation}
\begin{split} & -{\rm Id} = \widehat{\sigma}^{\rm ch} \circ \tau^{\rm ch}: \Omega^{{\rm ch}, H,\bar{k}}(Z) \rightarrow \Omega^{{\rm ch}, H,\bar{k}}(Z),
\\ & -{\rm Id} = \widehat{\tau}^{\rm ch} \circ \sigma^{\rm ch}: \cA^{{\rm ch}, H,\bar{k}}(Z) \rightarrow \cA^{{\rm ch}, H,\bar{k}}(Z).
\end{split}
\end{equation} 
In particular, $\tau^{\rm ch}$ is a linear isomorphism. \end{theorem}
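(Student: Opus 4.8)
### Proof Strategy for the Final Theorem

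The plan is to reduce the claimed identities $-\mathrm{Id} = \widehat{\sigma}^{\text{ch}} \circ \tau^{\text{ch}}$ (and its counterpart on $\mathcal{A}^{\text{ch},H}$) to a statement about the vacuum vector together with an inductive compatibility with vertex-algebra module operations. Since both $\tau^{\text{ch}}$ and $\widehat{\sigma}^{\text{ch}}$ are defined inductively from their value on $1$ via the rule $\rho(\nu_k(\mu)) = (-1)^{|\nu|}(\Phi(\nu))_k(\rho(\mu))$ (with $\Phi = \phi^{\text{ch}}$ for $\tau^{\text{ch}}$ and $\Phi = \widehat{\psi}^{\text{ch}} = (\phi^{\text{ch}})^{-1}$ for $\widehat{\sigma}^{\text{ch}}$), the composite $\widehat{\sigma}^{\text{ch}} \circ \tau^{\text{ch}}$ is itself determined by what it does to $1$ and how it interacts with the generators. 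First I would compute $(\widehat{\sigma}^{\text{ch}} \circ \tau^{\text{ch}})(1)$: we have $\tau^{\text{ch}}(1) = \widehat{A}$ and $\widehat{A} = (\iota_{\widehat A})_{(-1)}(1)$ (up to fixing the correct mode shift), so applying $\widehat{\sigma}^{\text{ch}}$ and using $\widehat{\psi}^{\text{ch}}(\iota_{\widehat A})$, which is the preimage of $\iota_{\widehat A}$ under $\phi^{\text{ch}}$ — by \eqref{tau0} this is $A$ — we get $\widehat{\sigma}^{\text{ch}}(\widehat{A}) = -(A)_k(\widehat{\sigma}^{\text{ch}}(1)) = -(A)_k(A)$. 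Since $\iota_A A = A_{(0)}A$ has weight zero degree zero and equals $1$ in the chiral de Rham complex (this is the relation $\iota_{\widehat A}\widehat A = 1$ transported back, or directly $A_{(0)}(A) \sim (z-w)^{-1}$ reading off the constant), this yields $(\widehat{\sigma}^{\text{ch}} \circ \tau^{\text{ch}})(1) = -1$, matching $-\mathrm{Id}$ on the generator of the module.

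The key step is then the inductive propagation. I would show that if $\mu \in \Omega^{\text{ch},H,\bar k}(Z)$ satisfies $(\widehat{\sigma}^{\text{ch}} \circ \tau^{\text{ch}})(\mu) = -\mu$, then the same holds for $\nu_k(\mu)$ where $\nu$ runs over the weight-homogeneous generators. Unwinding the definitions,
\begin{align*}
(\widehat{\sigma}^{\text{ch}} \circ \tau^{\text{ch}})(\nu_k(\mu)) &= \widehat{\sigma}^{\text{ch}}\big((-1)^{|\nu|}(\phi^{\text{ch}}(\nu))_k(\tau^{\text{ch}}(\mu))\big) \\
&= (-1)^{|\nu|}(-1)^{|\phi^{\text{ch}}(\nu)|}\big(\widehat{\psi}^{\text{ch}}(\phi^{\text{ch}}(\nu))\big)_k\big(\widehat{\sigma}^{\text{ch}}(\tau^{\text{ch}}(\mu))\big).
\end{align*}
Here the crucial point is that $\widehat{\psi}^{\text{ch}} \circ \phi^{\text{ch}} = \mathrm{Id}$, so $\widehat{\psi}^{\text{ch}}(\phi^{\text{ch}}(\nu)) = \nu$, and $|\phi^{\text{ch}}(\nu)| = |\nu|$ since $\phi^{\text{ch}}$ preserves the $\mathbb{Z}_2$-degree; the two signs $(-1)^{|\nu|}(-1)^{|\nu|} = 1$ cancel. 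Using the induction hypothesis $\widehat{\sigma}^{\text{ch}}(\tau^{\text{ch}}(\mu)) = -\mu$, this collapses to $\nu_k(-\mu) = -\nu_k(\mu)$, as desired. One subtlety is that $\widehat{\sigma}^{\text{ch}}$ is built from a \emph{different} list of generators than $\tau^{\text{ch}}$ — namely the $\phi^{\text{ch}}$-images $f, \omega, s^n_\alpha, \iota_X, L_X - \iota_X H^3, L_A, \Gamma^A$ — so I must check that the weight-homogeneous generators of $\Omega^{\text{ch},H}(Z)$ appearing in the definition of $\tau^{\text{ch}}$ map under $\phi^{\text{ch}}$ exactly onto the generator list used for $\widehat{\sigma}^{\text{ch}}$, and that the mode-index conventions in \eqref{phishift} are consistent on both sides; this is where care is needed, particularly for the $L_X$, $L_A$, $A$, $\iota_A$ generators where $\phi^{\text{ch}}$ mixes things (e.g. $\iota_A \mapsto \widehat A$, $A \mapsto \iota_{\widehat A}$), and where the earlier computation $\tau^{\text{ch}}(A) = -1$ already illustrates the kind of bookkeeping required.

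The second identity $-\mathrm{Id} = \widehat{\tau}^{\text{ch}} \circ \sigma^{\text{ch}}$ on $\mathcal{A}^{\text{ch},H,\bar k}(Z)$ follows by the same argument with the roles of $Z$ and $\widehat Z$ interchanged, using that $\sigma^{\text{ch}}$ and $\widehat{\tau}^{\text{ch}}$ are defined by the analogous inductive formulas and that the relevant composite of vertex-algebra isomorphisms ($\phi^{\text{ch}}$ for $\widehat Z$ composed with its inverse) is again the identity. Finally, the two relations together force $\tau^{\text{ch}}$ to be a linear isomorphism: $\widehat{\sigma}^{\text{ch}}\circ\tau^{\text{ch}} = -\mathrm{Id}$ shows $\tau^{\text{ch}}$ is injective and $\widehat{\sigma}^{\text{ch}}$ surjective, while the companion identity $\widehat{\tau}^{\text{ch}}\circ\sigma^{\text{ch}} = -\mathrm{Id}$ — applied after swapping $Z \leftrightarrow \widehat Z$ so that it reads $\tau^{\text{ch}}\circ\widehat{\sigma}^{\text{ch}} = -\mathrm{Id}$ on $\mathcal{A}^{\text{ch},\widehat H}(\widehat Z)$ — shows $\tau^{\text{ch}}$ is surjective, hence bijective with inverse $-\widehat{\sigma}^{\text{ch}}$. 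The main obstacle I anticipate is not conceptual but the verification that the generator lists and mode conventions match up precisely across $\phi^{\text{ch}}$, $\widehat{\psi}^{\text{ch}}$, and the shift rules \eqref{phishift}–\eqref{phishift}; once that dictionary is pinned down, the sign cancellation $(-1)^{|\nu|}(-1)^{|\nu|} = 1$ and the telescoping induction make the rest essentially formal.
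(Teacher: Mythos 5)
Your proposal is correct and follows essentially the same route as the paper: check the identity on the vacuum ($\tau^{\rm ch}(1)=\widehat A$, $\widehat\sigma^{\rm ch}(\widehat A)=-(\iota_A)_{(0)}A=-1$), then propagate inductively through the module action using $\widehat\psi^{\rm ch}\circ\phi^{\rm ch}=\mathrm{Id}$ and the sign cancellation $(-1)^{|\nu|}(-1)^{|\phi^{\rm ch}(\nu)|}=1$. The only slip is in the base case, where you write $\widehat A=(\iota_{\widehat A})_{(-1)}(1)$; the correct identification is $\widehat A=\widehat A_0(1)$ with $\widehat\psi^{\rm ch}(\widehat A)=\iota_A$ (since $\phi^{\rm ch}(\iota_A)=\widehat A$), which gives $-(\iota_A)_{(0)}(A)=-1$ as you conclude, and your spelled-out deduction of bijectivity from the two identities (with $Z\leftrightarrow\widehat Z$ swapped) is a correct elaboration of what the paper leaves implicit.
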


\begin{proof} We only prove the first identity, since the proof of the second one is the same. First, it is clear that it holds on the vacuum vector $1$ since $$\widehat{\sigma}^{\text{ch}} \circ \tau^{\text{ch}}(1) = \widehat{\sigma}^{\text{ch}}(\widehat{A}) = \widehat{\sigma}^{\text{ch}}(\widehat{A}_0 1) = - (\widehat{\psi}^{\text{ch}}(\widehat{A})_0 (\widehat{\sigma}(1)) = -( \iota_A)_0(A)= -1.$$
Next, it suffices to show that if $\widehat{\sigma}^{\text{ch}} \circ \tau^{\text{ch}}(\mu)= - \mu$, then for each weight-homogeneous generator $\nu$ of $\Omega^{\text{ch}, H,\bar{k}}(Z)$, we have $$ \widehat{\sigma}^{\text{ch}} \circ \tau^{\text{ch}}(\nu_k(\mu)) = - (\nu_k(\mu)).$$ To check this, we compute
\begin{equation}
\begin{split} \widehat{\sigma}^{\text{ch}} \circ \tau^{\text{ch}}(\nu_k(\mu)) & = (-1)^{|\nu|} \widehat{\sigma}^{\text{ch}} \bigg( (\phi^{\text{ch}}(\nu))_k \tau^{\text{ch}}(\mu)\bigg)
\\ &  = (-1)^{|\nu|}  (-1)^{|\phi^{\text{ch}}(\nu)|} \ \big(\widehat{\psi}^{\text{ch}}(\phi^{\text{ch}}(\nu))\big)_k \big(\widehat{\sigma}^{\text{ch}} (\tau^{\text{ch}}(\mu))\big)
\\ & =\nu_k (-\mu) = - \nu_k(\mu),
\end{split}
\end{equation} 
since $\widehat{\psi}^{\text{ch}}(\phi^{\text{ch}}(\nu)) = \nu$ and $\widehat{\sigma}^{\text{ch}} (\tau^{\text{ch}}(\mu)) = -\mu$.
\end{proof}

\section{Differential structure on $\mathcal{A}^{{\rm ch}, \widehat{H},\bar{k}}(\widehat{Z})$}

The final step is to equip $\mathcal{A}^{\text{ch}, \widehat{H}}(\widehat{Z})$ with a square-zero twisted differential $D_{\widehat{Z}, \widehat{H}}$ with the following properties.

\begin{enumerate}
\item $D_{\widehat{Z}, \widehat{H}}$ shifts the $\mathbb{Z}_2$-graded degree and preserves the weight filtration, that is, $$D_{\widehat{Z}, \widehat{H}} (\mathcal{A}^{\text{ch}, \widehat{H},\bar{k}}(\widehat{Z})_{[m]}) \subseteq \mathcal{A}^{\text{ch}, \widehat{H},\overline{k+1}}(\widehat{Z})_{[m]}.$$

\item On $\mathcal{A}^{\text{ch}, \widehat{H}}(\widehat{Z})_{[0]}$, $D_{\widehat{Z}, \widehat{H}}$ restricts to the exotic differential  $\nabla^{L^{\otimes n}} - n \iota_{\widehat{A}} + \widehat{H}$. In particular, the weight zero subcomplex $(\mathcal{A}^{\text{ch}, \widehat{H}}(\widehat{Z})_{[0]}, D_{\widehat{Z}, \widehat{H}})$ coincides with the exotic complex of Han and Mathai.

\item At weight zero, $\tau^{\text{ch}}$ intertwines the twisted differentials up to a sign, that is, \begin{equation} \label{intertwiningofd} \tau^{\text{ch}}\circ D_H = -D_{\widehat{Z}, \widehat{H}} \circ \tau^{\text{ch}}. \end{equation} In this notation, $D_H$ is the twisted differential on $\Omega^{\text{ch},H,\bar{k}}(Z)$ given by $D_H (\nu) = D(\nu) + :H \nu:$, where $D$ is the chiral de Rham differential.
\end{enumerate}

By Theorem 2 of \cite{LM15}, the cohomology of $(\Omega^{\text{ch},H}(Z), D_H)$ vanishes in positive weight, and coincides with the classical twisted cohomology in weight zero. Unfortunately, the intertwining property \eqref{intertwiningofd} no longer holds in positive weight, so it is not obvious whether the cohomology of $(\mathcal{A}^{\text{ch}, \widehat{H}}(\widehat{Z}), D_{\widehat{Z}, \widehat{H}})$ vanishes in positive weight. We expect that for all $m$, the inclusions of complexes \begin{equation} (\mathcal{A}^{\text{ch}, \widehat{H}}(\widehat{Z})_{[0]}, D_{\widehat{Z}, \widehat{H}}) \hookrightarrow  (\mathcal{A}^{\text{ch}, \widehat{H}}(\widehat{Z})_{[m]}, D_{\widehat{Z}, \widehat{H}}) \hookrightarrow  (\mathcal{A}^{\text{ch}, \widehat{H}}(\widehat{Z}), D_{\widehat{Z}, \widehat{H}})\end{equation} are all quasi-isomorphisms, that is, they induce isomorphisms in cohomology. In the last section, we will specialize to the case where both circle bundles $Z$ and $\widehat{Z}$ are trivial, and the fluxes $H$ and $\widehat{H}$ are both zero, and we will prove that this statement holds in this case.

We shall define $D_{\widehat{Z}, \widehat{H}}$ in two steps. Recall first that the chiral de Rham differential $D$ on $\Omega^{\text{ch},H}(Z)= \bigoplus_{n\in \mathbb{Z}} \Omega^{\text{ch}, H}_n(Z)$ is a vertex algebra derivation given on generators by
\begin{equation} \begin{split} & D(f) = df, \quad D(\omega) = d\omega, \quad D(\iota_X) = L_X - \iota_X H = L_X - \iota_X H^{3} +  :A ( \iota_X H^{2}):,
\\ & D( L_X) = L_X H^3 + :H^2 (\iota_X \widehat{H}^2): + :A (L_X H^{2}):,
\\ &  D(\iota_A) = L_A - \iota_A H = L_A - H^{2}, \quad D(A) = dA = \widehat{H}^{2},
\\ &  D(\Gamma^A) =  \partial A -\xi^A :,
\\ & D(e^{n\theta_{\alpha}}) = n :e^{n \theta_{\alpha}} d\theta_{\alpha}:\   =  n :( A_{\alpha} - A_{\alpha,\text{bas}}) e^{n \theta_{\alpha}}: .\end{split} \end{equation}
As in \cite{LM15}, $\xi^A$ has degree $1$ and weight $1$ and satisfies $D \xi^A = \partial dA$. We can transport this structure to $ \mathcal{A}^{\text{ch}, \widehat{H}}(\widehat{Z}) = \bigoplus_{n\in \mathbb{Z}} \mathcal{A}^{\text{ch}, \widehat{H}}_n(\widehat{Z})$ by defining the differential $D_{\widehat{Z}}$ on generators as follows 

\begin{equation} \label{actiondtildez} \begin{split} & D_{\widehat{Z}}(f) = d f, \quad  D_{\widehat{Z}}(\omega) = d \omega,\quad D_{\widehat{Z}}(\iota_X) = L_X- \iota_X H^{3}, 
\\ & D_{\widehat{Z}}(L_X) = L_X H^3 + :H^2 (\iota_X \widehat{H}^2): + :\widehat{H}^2 (\iota_X H^2): ,\quad D_{\widehat{Z}}(\iota_{\widehat{A}}) = \widehat{H}^{2} ,
\\ & D_{\widehat{Z}}(\widehat{A}) = L_A  , \quad D_{\widehat{Z}} (L_A) = 0,\quad D_{\widehat{Z}}(\Gamma^A)  = \partial \iota_{\widehat{A}} - \xi^A, 
\\ & D_{\widehat{Z}}(s^n_{\alpha}) = -n :\iota_{\widehat{A}} s^n_{\alpha}: +n :A_{\alpha, \text{bas}} s^n_{\alpha}:. \end{split} \end{equation} By construction, we have $$\phi^{\text{ch}} \circ D = D_{\widehat{Z}} \circ \phi^{\text{ch}}.$$ In other words, $\phi^{\text{ch}}$ is an isomorphism of differential vertex algebras. In particular, there exists a locally defined field $D_{\widehat{Z}}(z)$ whose zero-mode is globally well-defined and coincides with $D_{\widehat{Z}}$. Therefore $D_{\widehat{Z}}$ is a square-zero derivation on the algebra. It is clearly homogeneous of degree $\bar{1}$, that is 
$$D_{\widehat{Z}}(\mathcal{A}^{\text{ch}, \widehat{H},\bar{k}}(\widehat{Z})) \subseteq \mathcal{A}^{\text{ch}, \widehat{H},\overline{k+1}}(\widehat{Z}).$$ We caution the reader that neither $\phi^{\text{ch}}$ nor $D_{\widehat{Z}}$ preserve the weight filtration. Next, we modify $D_{\widehat{Z}}$ as follows. We define
\begin{equation} \begin{split} D_{\widehat{Z}, \widehat{H}}  & =  D_{\widehat{Z}} + D^0+ D^1 + D^2 + D^3 +D^4+D^5+D^6,
\\ & D^0 = - (:\widehat{A} \widehat{H}^{2}:)_{(0)},\qquad D^1 =   (: H^{2}\iota_{\widehat{A}}:)_{(0)},\qquad D^2 = -  (:\iota_{\widehat{A}} L_A:)_{(0)}, \qquad D^3 =   H^3_{(0)},
\\ & D^4 =  (:\iota_{\widehat{A}} L_A:)_{(1)}, \qquad D^5 =   (: H^{2}\iota_{\widehat{A}}:)_{(1)}, \qquad D^6 = \widehat{H}_{(-1)} = H^3_{(-1)} + (:\widehat{A} \widehat{H}^{2}:)_{(-1)}.\end{split} \end{equation}
We observe first that $D_{\widehat{Z}, \widehat{H}}$ is well-defined globally and homogeneous of degree $\bar{1}$. Note that $D_{\widehat{Z}} + D^0+D^1+D^2+D^3$ is a vertex algebra derivation, since $D_{\widehat{Z}}$, as well as the zero-mode of any field, has this property. The terms $D^4$ and $D^5$, being first modes of fields, are not derivations. We will need the following computations repeatedly for the remainder of this section.
\begin{equation} \label{actiond0} \begin{split}
& D^0(f) = 0,\qquad  D^0(\omega) = 0, \qquad D^0(s^n_{\alpha}) = 0,
\\ &  D^0(\iota_X) = \ :\widehat{A} (\iota_X \widehat{H}^2):,\qquad D^0(L_X) = \ :\widehat{A} (L_X \widehat{H}^2):,
\\ & D^0(\widehat{A}) = 0,\qquad D^0(\iota_{\widehat{A}}) = - \widehat{H}^2, \qquad D^0(L_A) = 0, \qquad D^0(\Gamma^A) = 0.
\end{split}\end{equation}

\begin{equation} \label{actiond1} \begin{split}
& D^1(f) = 0,\qquad  D^1(\omega) = 0, \qquad D^1(s^n_{\alpha}) = 0,
\\ & D^1(\iota_X) =   - :\iota_{\widehat{A}} (\iota_X H^2):,\qquad D^1(L_X) =   -  :\iota_{\widehat{A}} (L_X H^2): - :H^2 (\iota_X \widehat{H}^2):,
\\ & D^1(\widehat{A}) = H^2 \qquad D^1(\iota_{\widehat{A}}) = 0, \qquad D^1(L_A) = 0, \qquad D^1(\Gamma^A )= 0,
\end{split}\end{equation}

\begin{equation} \label{actiond2} \begin{split}
& D^2(f) = 0,\qquad  D^2(\omega) = 0, \qquad D^2(s^n_{\alpha}) = n : \iota_{\widehat{A}} s^n_{\alpha}:,
\\ & D^2(\iota_X) =  0,\qquad D^2(L_X) =\  :L_A (\iota_X \widehat{H}^2):,
\\ & D^2(\widehat{A}) =  - L_A,\qquad D^2(\iota_{\widehat{A}}) = 0, \qquad D^2(L_A) = 0, \qquad D^2(\Gamma^A )= -\partial \iota_{\widehat{A}},
\end{split}\end{equation}

\begin{equation} \label{actiond3} \begin{split}
& D^3(f) = 0,\qquad  D^3(\omega) = 0, \qquad D^3(s^n_{\alpha}) = 0,
\\ & D^3(\iota_X) =  \iota_X H^3 ,\qquad D^3(L_X) =  -L_X H^3 ,
\\ & D^3(\widehat{A}) = 0,\qquad D^3(\iota_{\widehat{A}}) = 0, \qquad D^3(L_A) = 0, \qquad D^3(\Gamma^A )= 0,
\end{split}\end{equation}

\begin{equation} \label{actiond4} \begin{split}
& D^4(f) = 0,\qquad  D^4(\omega) = 0, \qquad D^4(s^n_{\alpha}) = 0,
\\ & D^4(\iota_X) =  0,\qquad D^4(L_X) = 0,
\\ & D^4(\widehat{A}) = 0,\qquad D^4(\iota_{\widehat{A}}) = 0, \qquad D^4(L_A) = 0, \qquad D^4(\Gamma^A )=  \iota_{\widehat{A}},
\end{split}\end{equation}

\begin{equation} \label{actiond5} \begin{split}
& D^5(f) = 0,\qquad  D^5(\omega) = 0, \qquad D^5(s^n_{\alpha}) = 0,
\\ & D^5(\iota_X) =  0,\qquad D^5(L_X) = 0,
\\ & D^5(\widehat{A}) = 0,\qquad D^5(\iota_{\widehat{A}}) = 0, \qquad D^5(L_A) = 0, \qquad D^5(\Gamma^A )= 0,
\end{split}\end{equation}

\begin{equation} \label{actiond6} \begin{split}
& D^6(f) = \ :H^3 f: + :\widehat{A} \widehat{H^2} f:, \quad  D^6(\omega) = \ :H^3 \omega: + :\widehat{A} \widehat{H^2} \omega:, \quad D^6(s^n_{\alpha}) = \ :H^3 s^n_{\alpha}: + :\widehat{A} \widehat{H^2} s^n_{\alpha}:,
\\ & D^6(\iota_X) =   \ :H^3 \iota_X: + :(\widehat{A} \widehat{H^2}) \iota_X: \ = \ :H^3 \iota_X: + :\widehat{A} \widehat{H^2} \iota_X:   - :\partial \widehat{A} (\iota_X \widehat{H^2}):,
\\ & D^6(L_X) =  \ :H^3 L_X: + :(\widehat{A} \widehat{H^2}) L_X:\  =   \ :H^3 L_X: + :\widehat{A} \widehat{H^2} L_X: - :\partial \widehat{A} (L_X\widehat{H^2}):,
\\ & D^6(\widehat{A}) =  \ :H^3 \widehat{A}: ,\qquad D^6(\iota_{\widehat{A}})  =  \ :H^3 \iota_{\widehat{A}}: + :(\widehat{A} \widehat{H^2})\iota_{\widehat{A}}:\  =   \ :H^3 \iota_{\widehat{A}}: + :\widehat{A} \widehat{H^2} \iota_{\widehat{A}}: + \partial \widehat{H^2},
\\ & D^6(L_A) = \ :H^3 L_A: + :\widehat{A} \widehat{H^2} L_A:, \qquad D^6(\Gamma^A )= \ :H^3 \Gamma^A: + :\widehat{A} \widehat{H^2} \Gamma^A:.
\end{split}\end{equation}

\begin{lemma} The operator $D_{\widehat{Z}, \widehat{H}}$ on $\mathcal{A}^{{\rm ch}, \widehat{H}}(\widehat{Z})$ preserves the weight filtration \eqref{def:wtfiltration}. In particular, $D_{\widehat{Z}, \widehat{H}}$ acts on the weight zero subspace $\mathcal{A}^{{\rm ch}, \widehat{H}}(\widehat{Z})_{[0]}$. \end{lemma}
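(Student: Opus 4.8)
The plan is to write $D_{\widehat{Z},\widehat{H}} = (D_{\widehat{Z}} + D^2) + (D^0 + D^1 + D^3 + D^4 + D^5 + D^6)$ and to handle the two groups separately; I argue locally on each $\widehat{U}_\alpha$, since the filtration on $\mathcal{A}^{\text{ch},\widehat{H}}(\widehat{Z})$ is glued from the local ones, and I abbreviate $\mathcal{A}^{\text{ch},\widehat{H}}(\widehat{U}_\alpha)_{[i]}$ by $\mathcal{A}_{[i]}$. Each operator in the second group is a mode $a_{(k)}$ of an element $a \in \mathcal{A}_{[\text{wt}(a)]}$ with $\text{wt}(a) \le k+1$ --- for instance $D^6 = \widehat{H}_{(-1)}$ with $\text{wt}(\widehat{H}) = \text{wt}(H^3 + :\widehat{A}\widehat{H}^2:) = 0$, $D^4 = (:\iota_{\widehat{A}}L_A:)_{(1)}$ with $\text{wt}(:\iota_{\widehat{A}}L_A:) = 2$, and $D^0, D^1, D^3, D^5$ are mode operators of weight-$\le 1$ fields --- so by the filtration compatibility $a_{(k)}\big(\mathcal{A}_{[j]}\big) \subseteq \mathcal{A}_{[\text{wt}(a)+j-k-1]}$ recorded just after \eqref{def:weight}, every one of $D^0, D^1, D^3, D^4, D^5, D^6$ maps $\mathcal{A}_{[j]}$ into $\mathcal{A}_{[j]}$. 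Note that $D^2 = -(:\iota_{\widehat{A}}L_A:)_{(0)}$ cannot be grouped with these: being a $k=0$ mode of the weight-$2$ element $:\iota_{\widehat{A}}L_A:$, it \emph{raises} the filtration level by one on its own, and its weight-increasing part has to be absorbed by $D_{\widehat{Z}}$.

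It remains to show that $D_{\widehat{Z}} + D^2$ preserves the filtration. Since $D_{\widehat{Z}}$ is a vertex algebra derivation and $D^2$, as a zeroth mode, is also a derivation, the sum is a derivation of all the $n$-th products that commutes with $\partial$. For such an operator, preserving the weight filtration is equivalent to sending each element of a strong generating set into its own filtration level: indeed $\mathcal{A}_{[i]}$ is spanned by the normally ordered monomials in the $\partial^a g$ of formal weight $\sum (a + \text{wt}(g)) \le i$, and applying the derivation through the Leibniz rule only replaces each $\partial^a g$ by $\partial^a(D_{\widehat{Z}}+D^2)(g)$, of no larger formal weight once the generator check holds. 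The relevant strong generators are $f, \omega, \widehat{A}, s^n_\alpha\ (n\in\mathbb{Z}), \iota_X, L_X, \iota_{\widehat{A}}, L_A, \Gamma^A$, and the values of $D_{\widehat{Z}}+D^2$ on them are read off from \eqref{actiondtildez} and \eqref{actiond2}. On $f, \omega, \iota_X, L_X, \iota_{\widehat{A}}, L_A$ one has $D^2 = 0$ and every summand of $D_{\widehat{Z}}(g)$ already has weight $\le \text{wt}(g)$, using that $\iota_X H^3, \iota_X H^2, A_{\alpha,\text{bas}}, H^2, H^3, \widehat{H}^2$ are pullbacks of forms on $M$, hence of weight $0$, and that $\text{wt}(L_X) = \text{wt}(\iota_{\widehat{A}}) = 1$.

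The only real content is the three generators $\widehat{A}, s^n_\alpha, \Gamma^A$, on which $D_{\widehat{Z}}$ produces a strictly weight-increasing term: $D_{\widehat{Z}}(\widehat{A}) = L_A$, $D_{\widehat{Z}}(s^n_\alpha) = -n:\iota_{\widehat{A}}s^n_\alpha: + n:A_{\alpha,\text{bas}}s^n_\alpha:$, and $D_{\widehat{Z}}(\Gamma^A) = \partial\iota_{\widehat{A}} - \xi^A$, whose summands $L_A \in \mathcal{A}_{[1]}$, $-n:\iota_{\widehat{A}}s^n_\alpha: \in \mathcal{A}_{[1]}$ and $\partial\iota_{\widehat{A}} \in \mathcal{A}_{[2]}$ exceed the filtration levels $0, 0, 1$ of $\widehat{A}, s^n_\alpha, \Gamma^A$. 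These are cancelled exactly by $D^2(\widehat{A}) = -L_A$, $D^2(s^n_\alpha) = n:\iota_{\widehat{A}}s^n_\alpha:$, and $D^2(\Gamma^A) = -\partial\iota_{\widehat{A}}$, leaving $(D_{\widehat{Z}}+D^2)(\widehat{A}) = 0$, $(D_{\widehat{Z}}+D^2)(s^n_\alpha) = n:A_{\alpha,\text{bas}}s^n_\alpha: \in \mathcal{A}_{[0]}$, and $(D_{\widehat{Z}}+D^2)(\Gamma^A) = -\xi^A \in \mathcal{A}_{[1]}$ (here $\text{wt}(\xi^A) = 1$). This finishes the generator check, hence $D_{\widehat{Z}}+D^2$, and with it $D_{\widehat{Z},\widehat{H}}$, preserves the weight filtration, so in particular $D_{\widehat{Z},\widehat{H}}\big(\mathcal{A}^{\text{ch},\widehat{H}}(\widehat{Z})_{[0]}\big) \subseteq \mathcal{A}^{\text{ch},\widehat{H}}(\widehat{Z})_{[0]}$. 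The only point that requires care is bookkeeping the exact coefficients and signs so that these three cancellations are precise; everything else follows mechanically from the filtration-degree formula.
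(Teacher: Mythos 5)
Your proposal is correct and follows essentially the same route as the paper: the same decomposition $D_{\widehat{Z},\widehat{H}} = (D_{\widehat{Z}}+D^2) + (D^0+D^1+D^3+D^4+D^5+D^6)$, a generator-by-generator Leibniz argument for the derivation part, and the mode-weight bound $a_{(k)}\big(\mathcal{A}_{[j]}\big) \subseteq \mathcal{A}_{[\mathrm{wt}(a)+j-k-1]}$ for the remaining terms. You have merely made explicit the cancellations on $\widehat{A}$, $s^n_{\alpha}$, $\Gamma^A$ that the paper declares ``apparent'' from \eqref{actiondtildez} and \eqref{actiond2}, and your bookkeeping is accurate.
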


\begin{proof} Note that $D_{\widehat{Z}, \widehat{H}} = D' + D''$ where $D' = D_{\widehat{Z}} + D^2$ and $D'' = D^0 + D^1 + D^3 +D^4 +D^5 +D^6$. Since $D'$ is a vertex algebra derivation, to show that it preserves the weight filtration it suffices to check this on generators, and this is apparent from \eqref{actiondtildez} and \eqref{actiond2}. Even though $D''$ is not a derivation, it is apparent from \eqref{def:weight} that $D''$ consists of terms which either preserve or lower the weight. This completes the proof. \end{proof}

\begin{lemma} The weight zero subcomplex $\big(\mathcal{A}^{{\rm ch}, \widehat{H}}(\widehat{Z})_{[0]}, D_{\widehat{Z}, \widehat{H}}\big)$ can be identified with the Han-Mathai complex. In particular, on $\mathcal{A}^{{\rm ch}, \widehat{H}}(\widehat{Z})_{[0]}$ we have  $\tau^{\rm ch}\circ D_H = -D_{\widehat{Z}, \widehat{H}} \circ \tau^{\rm ch}$.
\end{lemma}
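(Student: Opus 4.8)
The plan is to establish the identification of complexes first, and then deduce the intertwining of differentials from it together with the results of \cite{HM18}. On the level of spaces the identification is essentially already recorded: locally $\mathcal{A}^{\text{ch},\widehat{H}}_n(\widehat{U}_{\alpha})_{[0]}$ is spanned by the elements $:(\omega+\widehat{A}\nu)s^n_{\alpha}:$ with $\omega,\nu$ pullbacks of forms on $M$, and these are identified with the invariant exotic forms $(\omega+\widehat{A}\wedge\nu)\otimes s^n_{\alpha}\in\Omega^{\bar{k}}(\widehat{Z},\widehat{\pi}^*L^{\otimes n})^{\widehat{\TT}}$ as noted just after \eqref{def:weight}; these local identifications glue to $\mathcal{A}^{\text{ch},\widehat{H}}(\widehat{Z})_{[0]}\cong\mathcal{A}^{\bar{k}}(\widehat{Z})$ because the weight-zero subspace is intrinsic. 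What remains is to show that $D_{\widehat{Z},\widehat{H}}$ restricts on $\mathcal{A}^{\text{ch},\widehat{H}}(\widehat{Z})_{[0]}$ to the Han--Mathai exotic differential $\widehat{\pi}^*\nabla^{L^{\otimes n}}-n\iota_{\widehat{v}}+\widehat{H}$, where $\iota_{\widehat{v}}$ denotes contraction by the fundamental vertical vector field $X_{\widehat{A}}$. Since $D_{\widehat{Z},\widehat{H}}$ is globally defined, this may be checked locally on the weight-zero generators $f,\omega,\widehat{A},s^n_{\alpha}$.

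For this verification I would sort the eight summands of $D_{\widehat{Z},\widehat{H}}=D_{\widehat{Z}}+D^0+\cdots+D^6$ by their effect on conformal weight, using \eqref{def:weight} and the rule that a mode $a_{(m)}$ of a field of conformal weight $w$ shifts weight by $-(m-w+1)$. The terms $D^0=-(:\widehat{A}\widehat{H}^2:)_{(0)}$, $D^3=H^3_{(0)}$ and $D^5=(:H^2\iota_{\widehat{A}}:)_{(1)}$ strictly lower conformal weight, hence annihilate $\mathcal{A}^{\text{ch},\widehat{H}}(\widehat{Z})_{[0]}$ and may be dropped. Next, $D'=D_{\widehat{Z}}+D^2$ is a vertex algebra derivation preserving the weight filtration (by the preceding lemma), and $D^1=(:H^2\iota_{\widehat{A}}:)_{(0)}$, being a zero mode, is also a derivation, which preserves conformal weight since $:H^2\iota_{\widehat{A}}:$ has weight one; evaluating $D'$ and $D^1$ on the generators via \eqref{actiondtildez}, \eqref{actiond1}, \eqref{actiond2} and extending by the super-Leibniz rule, the $L_A$- and $\iota_{\widehat{A}}$-valued parts of $D_{\widehat{Z}}$ and $D^2$ on $\widehat{A}$ and $s^n_{\alpha}$ cancel, leaving $D'(\widehat{A})=0$, $D'(s^n_{\alpha})=n:A_{\alpha,\text{bas}}s^n_{\alpha}:$, while $D^1(\widehat{A})=d\widehat{A}=H^2$; thus $D'+D^1$ reassembles precisely into the covariant exterior derivative $\widehat{\pi}^*\nabla^{L^{\otimes n}}$. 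The term $D^6=\widehat{H}_{(-1)}$ acts by the normally ordered product with $\widehat{H}$, that is, by $\widehat{H}\wedge$. The delicate summand is $D^4=(:\iota_{\widehat{A}}L_A:)_{(1)}$, which preserves conformal weight but, being a first mode, is not a derivation; I would expand it with the mode formula for normally ordered products, and applied to $:(\omega+\widehat{A}\nu)s^n_{\alpha}:$ every resulting summand either lowers weight or reduces to $(L_A)_{(0)}(\iota_{\widehat{A}})_{(0)}$, whereupon $(\iota_{\widehat{A}})_{(0)}\widehat{A}=1$ and $(L_A)_{(0)}s^n_{\alpha}=-n s^n_{\alpha}$ give $D^4=-n\iota_{\widehat{v}}$ on the weight-zero subspace. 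Summing, $D_{\widehat{Z},\widehat{H}}$ restricts to $\widehat{\pi}^*\nabla^{L^{\otimes n}}-n\iota_{\widehat{v}}+\widehat{H}$, which together with the identification above shows that the weight-zero subcomplex is the Han--Mathai complex.

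The second assertion is then immediate: at weight zero $\Omega^{\text{ch},H,\bar{k}}(Z)_{[0]}$ is the ordinary space $\Omega^{\bar{k}}(Z)$, the chiral differential $D$ restricts to $d$ so that $D_H$ restricts to the twisted de Rham differential $d+H$, and, as established above, $\tau^{\text{ch}}$ restricts to the classical Han--Mathai map $\tau$. By \cite{HM18} the map $\tau$ intertwines $d+H$ with $-(\widehat{\pi}^*\nabla^{L^{\otimes n}}-n\iota_{\widehat{v}}+\widehat{H})$, which by the first part equals $-D_{\widehat{Z},\widehat{H}}$ on the weight-zero subspaces; hence $\tau^{\text{ch}}\circ D_H=-D_{\widehat{Z},\widehat{H}}\circ\tau^{\text{ch}}$ there.

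I expect the main obstacle to be the computation in the second paragraph, specifically the honest treatment of the non-derivation first mode $D^4$ and of the vanishing of $D^5$ via the non-commutative Wick formula, together with keeping track of the Koszul signs coming from the $\mathbb{Z}_2$-grading and from the degree shift built into $\tau^{\text{ch}}$, and of the non-associativity corrections that arise when rewriting iterated normally ordered products such as $:(\omega+\widehat{A}\nu)s^n_{\alpha}:$; these corrections should, however, be mild on the weight-zero subspace, where the relevant fields have only simple mutual contractions.
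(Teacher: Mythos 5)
Your proposal is correct and its core is the same as the paper's: a local verification that $D_{\widehat{Z},\widehat{H}}$ restricts on the weight-zero subspace to the exotic differential $\widehat{\pi}^*\nabla^{L^{\otimes n}}-n\iota_{\widehat{A}}+\widehat{H}\wedge$. The bookkeeping differs slightly: the paper evaluates $D_{\widehat{Z},\widehat{H}}$ directly on a general element $:\omega_0 s^n_{\alpha}:+:\widehat{A}\,\omega_1 s^n_{\alpha}:$ and reads off the answer, whereas you sort the summands $D_{\widehat{Z}},D^0,\dots,D^6$ by their effect on the weight filtration, discard $D^0,D^3,D^5$ as strictly filtration-lowering, and identify $D_{\widehat{Z}}+D^2+D^1$, $D^4$, and $D^6$ with $\widehat{\pi}^*\nabla^{L^{\otimes n}}$, $-n\iota_{\widehat{A}}$, and $\widehat{H}\wedge$ respectively; your reduction of the non-derivation term $D^4=(:\iota_{\widehat{A}}L_A:)_{(1)}$ to $(L_A)_{(0)}(\iota_{\widehat{A}})_{(0)}$ on weight zero is exactly what produces the contraction term $-n\omega_1 s^n_{\alpha}$ in the paper's displayed computation, so the two arguments carry the same content. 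The one genuine divergence is the second assertion: the paper disposes of the intertwining $\tau^{\mathrm{ch}}\circ D_H=-D_{\widehat{Z},\widehat{H}}\circ\tau^{\mathrm{ch}}$ as a "straightforward computation," while you deduce it from the identification just established together with the facts that $\tau^{\mathrm{ch}}$ restricts to the classical Han--Mathai map $\tau$ at weight zero and that $\tau$ intertwines $d+H$ with $-(\widehat{\pi}^*\nabla^{L^{\otimes n}}-\iota_{n\widehat{v}}+\widehat{H})$ by the results of Han and Mathai quoted in the introduction; this is a legitimate and arguably cleaner route, at the cost of relying on the cited classical statement rather than re-verifying it.
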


\begin{proof} In local coordinates, the general element of $g \in \mathcal{A}^{\text{ch}, \widehat{H}}_n(\widehat{Z})_{[0]}$ has the form $$g =\ :\omega_0 s^n_{\alpha}: + :\widehat{A} \omega_1 s^n_{\alpha}:$$ where $\omega_0, \omega_1$ are basic differential forms. We compute
\begin{equation} \begin{split}  & D_{\widehat{Z}, \widehat{H}} \bigg(: \omega_0s^n_{\alpha}: + :\widehat{A} \omega_1s^n_{\alpha}:\bigg) 
\\ & = \ :(d \omega_0)s^n_{\alpha} : + (-1)^{|\omega_0|} n :\omega_0 A_{\alpha, \text{bas}} s^n_{\alpha}: + :H^2 \omega_1 s^n_{\alpha}: - :\widehat{A}(d\omega_1) s^n_{\alpha} : - (-1)^{|\omega_1|}n :\widehat{A} \omega_1  A_{\alpha,\text{bas}} s^n_{\alpha} :
\\ &-  n \omega_1 s^n_{\alpha} + :H^3 \omega_0 s^n_{\alpha}: + : H^3\widehat{A} \omega_1 s^n_{\alpha}: + :\widehat{A} \widehat{H}^2 \omega_0 s^n_{\alpha}:.
\end{split} \end{equation}

On the other hand, identifying $g$ with the element $ \omega_0 \wedge s^n_{\alpha} + \widehat{A} \wedge \omega_1 \wedge s^n_{\alpha}$ of the Han-Mathai complex, it is apparent from \eqref{nabla} that $D_{\widehat{Z}, \widehat{H}}$ corresponds to $(\widehat{\pi}^* \nabla^{L^{\otimes n}}- \iota_{ n \widehat{\nu}} + \widehat{H})(g)$. In this notation, the operator $\iota_{n\widehat{\nu}}$ is identified with $\iota_{n \widehat{A}} = n \iota_{\widehat{A}}$. The statement that $\tau^{\text{ch}}$ intertwines the differentials $D_H$ and $D_{\widehat{Z}, \widehat{H}}$ up to sign is a straightforward computation.
\end{proof}

Our main result in this section is the following
\begin{theorem} \label{sec5main} $D_{\widehat{Z}, \widehat{H}}$ is a square-zero operator on $\mathcal{A}^{{\rm ch}, \widehat{H}}(\widehat{Z})$.
\end{theorem}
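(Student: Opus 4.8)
The operator $D_{\widehat{Z}, \widehat{H}}$ is a sum of a vertex algebra derivation $D_{\widehat{Z}}$, the zero-modes $D^0, D^1, D^2, D^3$ (also derivations), and the non-derivation terms $D^4, D^5$ (first modes of fields) together with $D^6 = \widehat{H}_{(-1)}$. The plan is to compute $D_{\widehat{Z},\widehat{H}}^2$ by expanding in these pieces and showing all contributions cancel. The first step is to reduce to a check on the strong generators $f, \omega, s^n_\alpha, \iota_X, L_X, \widehat{A}, \iota_{\widehat{A}}, L_A, \Gamma^A$: since $D_{\widehat{Z},\widehat{H}}$ does \emph{not} act as a derivation, one cannot naively restrict to generators, so I would first isolate the derivation part $D' = D_{\widehat{Z}} + D^0 + D^1 + D^2 + D^3$ and write $D_{\widehat{Z},\widehat{H}} = D' + D^4 + D^5 + D^6$. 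Then $D_{\widehat{Z},\widehat{H}}^2 = (D')^2 + [D', D^4+D^5+D^6]_+ + (D^4+D^5+D^6)^2$, where the cross terms and the square of the non-derivation part must be analyzed using the quasi-commutativity and quasi-associativity identities; in particular $(D^j)^2$ for $j=4,5,6$ and products like $D^4 D^6$ involve the failure of associativity of the normally ordered product, which is the conceptual heart of the calculation.

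Second, I would establish that $(D')^2 = 0$. Since $D'$ is a vertex algebra derivation whose zero-mode is (locally) the zero-mode of a field $D'(z) = D_{\widehat{Z}}(z) - (:\widehat{A}\widehat{H}^2:) + (:H^2 \iota_{\widehat{A}}:) - (:\iota_{\widehat{A}} L_A:) + H^3$, it suffices to check $(D')^2 = 0$ on generators using \eqref{actiondtildez}, \eqref{actiond0}, \eqref{actiond1}, \eqref{actiond2}, \eqref{actiond3}; one expects the cancellations here to mirror the verification that $D_H^2 = 0$ in \cite{LM15} transported via $\phi^{\text{ch}}$, together with the identities $H^2 = F_{\widehat{A}}$, $\widehat{H}^2 = F_A$, $H^3 = \widehat{H}^3$ and the Bianchi-type relations $dH^2 = 0 = d\widehat{H}^2$, $dH^3 = -A\wedge \text{(something)}$ forced by $dH = 0$.

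Third — and this is the main obstacle — I would handle the terms involving $D^4, D^5, D^6$. Because $D^4 = (:\iota_{\widehat A} L_A:)_{(1)}$ and $D^5 = (:H^2 \iota_{\widehat A}:)_{(1)}$ are first modes rather than zero modes, they are not derivations, and their commutators with $D'$ and with each other produce correction terms governed precisely by the noncommutativity/nonassociativity formulas $a_{(m)}(b_{(n)}c) - b_{(n)}(a_{(m)}c) = \sum_{j\geq 0}\binom{m}{j}(a_{(j)}b)_{(m+n-j)}c$ and the quasi-associativity identity for $:ab:$. The strategy is to compute, on each generator, the full expansion $D_{\widehat{Z},\widehat{H}}^2(x)$ as a sum over all $36$ ordered pairs $D^i D^j$ (plus the $D_{\widehat{Z}}$ cross terms), using \eqref{actiond0}--\eqref{actiond6}, and to verify term-by-term cancellation. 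I expect the worst cases to be $x = \Gamma^A$ and $x = L_X$: for $\Gamma^A$ the terms $D^2(\Gamma^A) = -\partial\iota_{\widehat A}$, $D^4(\Gamma^A) = \iota_{\widehat A}$, together with $D_{\widehat{Z}}(\Gamma^A) = \partial\iota_{\widehat A} - \xi^A$ must conspire with the $\xi^A$-terms (using $D\xi^A = \partial dA$), while for $L_X$ one must track the many $\widehat{H}^2$- and $H^2$-valued two-form contributions, including the nonassociative corrections coming from iterated normally ordered products like $:\widehat{A}(L_X\widehat{H}^2):$. Throughout, the curvature identities $H^2 = d\widehat{A}$, $\widehat{H}^2 = dA$ and the closedness of $H$, together with $D$ acting on $H^3$-terms via the chiral de Rham differential, are exactly what forces the cancellation; where these are insufficient one must invoke the reordering formula $(:ab:)_{(n)} c = \sum_{j\geq 0}\big(a_{(-j-1)}(b_{(n+j)}c) + (-1)^{|a||b|} b_{(n-j-1)}(a_{(j)}c)\big)$ to split first modes of normally ordered products into manageable pieces. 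Once all generators check out, the fact that $D_{\widehat{Z},\widehat{H}}$ preserves the weight filtration (previous lemma) and that $\mathcal{A}^{\text{ch},\widehat H}(\widehat Z)$ is generated as a module over itself by $1$ lets me conclude $D_{\widehat{Z},\widehat{H}}^2 = 0$ globally, since a derivation-like obstruction vanishing on generators and on $1$ vanishes everywhere — but here, because of the non-derivation terms, I would instead argue that $D_{\widehat{Z},\widehat{H}}$ is (locally) the zero-mode of a single field plus lower-weight corrections, reduce the square-zero property to an $L_0$-graded statement on the associated graded $\text{gr}\,\mathcal{A}^{\text{ch},\widehat H}(\widehat Z)$, and finish by a weight-filtration induction.
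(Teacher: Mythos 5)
Your overall decomposition is the right one, and it matches the paper's: the paper sets $D_{\text{Der}} = D_{\widehat{Z}} + D^0 + D^1 + D^2 + D^3$ (your $D'$), $D_{\text{NDer}} = D^4 + D^5$, and $D_{\widehat{H}} = D^6$, and you correctly identify that the non-derivation terms prevent a naive reduction to generators. However, there is a genuine gap at the center of your plan: you propose to ``establish that $(D')^2 = 0$'' as a separate step, and this is false. The derivation part does \emph{not} square to zero on its own. For instance, the paper computes
$(D_{\text{Der}})^2(\iota_X) = \ :L_A(\iota_X \widehat{H}^2): + :H^2(\iota_X\widehat{H}^2): + :\widehat{H}^2(\iota_X H^2):$, as well as $(D_{\text{Der}})^2(\Gamma^A) = -\partial\widehat{H}^2$ and $(D_{\text{Der}})^2(s^n_{\alpha}) = n:\widehat{H}^2 s^n_{\alpha}:$, none of which vanish unless the curvatures do. Your heuristic that the cancellation should ``mirror $D_H^2=0$ transported via $\phi^{\text{ch}}$'' only gives $(D_{\widehat{Z}})^2 = 0$; the added zero-modes $D^0,\dots,D^3$ destroy the square-zero property of the derivation part. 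The actual mechanism is that $(D_{\text{Der}})^2$ is cancelled exactly by the anticommutator $D_{\text{NDer}}D_{\widehat{H}} + D_{\widehat{H}}D_{\text{NDer}}$ of the two non-derivation pieces; the remaining cross terms vanish because $D_{\text{Der}}$ annihilates the fields $H^3 + :\widehat{A}\widehat{H}^2:$, $:\iota_{\widehat{A}}H^2:$ and $:\iota_{\widehat{A}}L_A:$, and $(D_{\text{NDer}})^2 = 0 = (D_{\widehat{H}})^2$. So the identity you actually need to prove is $(D_{\text{Der}})^2 + \{D_{\text{NDer}}, D_{\widehat{H}}\} = 0$, not two separate vanishings.

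The second weak point is the globalization. Your closing suggestion --- realize $D_{\widehat{Z},\widehat{H}}$ as a zero-mode plus lower-weight corrections and pass to the associated graded of the weight filtration --- would discard precisely the interplay between $(D_{\text{Der}})^2$ and $\{D_{\text{NDer}}, D_{\widehat{H}}\}$ that produces the cancellation, since these two contributions are not separated by the filtration (e.g.\ both $(D_{\text{Der}})^2(\Gamma^A)$ and $\{D_{\text{NDer}},D_{\widehat{H}}\}(\Gamma^A)$ equal $\mp\partial\widehat{H}^2$, living in the same filtered piece). The paper instead extends from generators to the whole algebra by an induction on the \emph{length} of normally ordered monomials, organized through the chain of $D_{\widehat{Z},\widehat{H}}$-stable subalgebras $\langle\Omega(M)\rangle \subseteq \mathcal{A}_0^{\text{ch},\widehat{H}}(\widehat{Z})^{i\mathbb{R}[t]} \subseteq \mathcal{A}_0^{\text{ch},\widehat{H}}(\widehat{Z}) \subseteq \mathcal{A}^{\text{ch},\widehat{H}}(\widehat{Z})$: writing $\nu = \ :(\partial^{i_1}\mu_1)\nu':$, one shows that $\{D_{\text{NDer}},D_{\widehat{H}}\}(:(\partial^{i_1}\mu_1)\nu':)$ equals $-:((D_{\text{Der}})^2(\partial^{i_1}\mu_1))\nu': + :(\partial^{i_1}\mu_1)\{D_{\text{NDer}},D_{\widehat{H}}\}(\nu'):$ (this is where the nonassociativity identities you cite genuinely enter), so the obstruction telescopes and vanishes by the inductive hypothesis. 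Without this telescoping identity, or some substitute for it, a check on generators does not propagate to the full algebra.
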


The proof is quite involved, and it depends crucially on the nonassociativity of the normally ordered product, and the fact that $D_{\widehat{Z}, \widehat{H}}$ fails to be a derivation in the category of vertex algebra modules due to the terms $D^4$ and $D^5$. In order to prove Theorem \ref{sec5main}, we observe that $\mathcal{A}^{\text{ch}, \widehat{H}}(\widehat{Z})$ has the following sequence of vertex subalgebras which are all closed under the action of $D_{\widehat{Z}, \widehat{H}}$:
\begin{equation} \langle \Omega (M) \rangle \subseteq \mathcal{A}_0^{\text{ch}, \widehat{H}}(\widehat{Z})^{i \mathbb{R}[t]} \subseteq \mathcal{A}_0^{\text{ch}, \widehat{H}}(\widehat{Z}) \subseteq \mathcal{A}^{\text{ch}, \widehat{H}}(\widehat{Z}). \end{equation}

In this notation, 
\begin{enumerate} 
\item $\langle \Omega(M) \rangle$ denotes the abelian vertex algebra generated by all differential forms on $M$, 
\item $\mathcal{A}_0^{\text{ch}, \widehat{H}}(\widehat{Z})^{i \mathbb{R}[t]}$ denotes the ${i \mathbb{R}[t]}$-invariant subalgebra of $\mathcal{A}_0^{\text{ch}, \widehat{H}}(\widehat{Z})$, which is generated by $\Omega(M)$ together with $\iota_X, L_X, \widehat{A}, \iota_{\widehat{A}}, L_A$,
\item $\mathcal{A}_0^{\text{ch}, \widehat{H}}(\widehat{Z})$ is generated by the above fields together with $\Gamma^A$,
\item $\mathcal{A}^{\text{ch}, \widehat{H}}(\widehat{Z})$ is generated by the above fields together with $s^n_{\alpha}$ in local coordinates, for all $n\in \mathbb{Z}$.
\end{enumerate}

We will proceed by proving Theorem \ref{sec5main} successively on each of these subalgebras, and we organize this as a sequence of lemmas.

\begin{lemma}  $D_{\widehat{Z}, \widehat{H}}$ is a square-zero operator on the subalgebra $\langle \Omega(M) \rangle$.
\end{lemma}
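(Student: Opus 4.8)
The plan is to reduce the claim to a direct computation on the abelian subalgebra $\langle \Omega(M)\rangle$, where all vertex algebra products are extremely simple. Since $\langle \Omega(M)\rangle$ is generated by $\Omega(M)$ under normally ordered products (and their derivatives), and since $D_{\widehat{Z},\widehat{H}}$ preserves this subalgebra, it suffices to understand how $D_{\widehat{Z},\widehat{H}}$ acts there. The key observation is that on $\langle \Omega(M)\rangle$ almost all of the summands of $D_{\widehat{Z},\widehat{H}}$ vanish: inspecting \eqref{actiondtildez} and \eqref{actiond0}--\eqref{actiond6}, a basic form $\omega\in\Omega^1(M)$ satisfies $D_{\widehat{Z}}(\omega)=d\omega$, $D^0(\omega)=D^1(\omega)=\cdots=D^5(\omega)=0$, and $D^6(\omega)= :H^3\omega: + :\widehat{A}\widehat{H}^2\omega:$. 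So on generators $D_{\widehat{Z},\widehat{H}}(\omega)= d\omega + :H^3\omega: + :\widehat{A}\widehat{H}^2\omega: = :\widehat{H}\omega: + d\omega$, which is exactly the twisted de Rham differential $d_{\widehat{H}}=d+\widehat{H}$ acting by wedge. In other words, on $\langle\Omega(M)\rangle$ the operator $D_{\widehat{Z},\widehat{H}}$ restricts to (the chiralization of) $d_{\widehat{H}}$.

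Next I would pin down that $D_{\widehat{Z},\widehat{H}}$ acts on $\langle\Omega(M)\rangle$ as a derivation of the (abelian, i.e. purely normally-ordered) product, so that its square is determined by its action on generators and the Leibniz rule. The potentially non-derivation pieces $D^4$ and $D^5$ are first modes $(:\iota_{\widehat A}L_A:)_{(1)}$ and $(:H^2\iota_{\widehat A}:)_{(1)}$; on $\langle\Omega(M)\rangle$ these annihilate every generator (see \eqref{actiond4}, \eqref{actiond5}) and, more importantly, the fields $\iota_{\widehat A}$, $L_A$ do not appear, so these modes act as zero on the whole subalgebra. Hence on $\langle\Omega(M)\rangle$, $D_{\widehat{Z},\widehat{H}}= D_{\widehat Z}+D^0+D^1+D^2+D^3+D^6$, and each of these is a vertex algebra derivation (the zero modes of fields, plus $D_{\widehat Z}$). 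Therefore $D_{\widehat{Z},\widehat{H}}$ is a derivation on $\langle\Omega(M)\rangle$, and $(D_{\widehat{Z},\widehat{H}})^2$ is again a derivation, so it vanishes identically iff it vanishes on the generating set $\Omega(M)$.

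It then remains to check $(D_{\widehat{Z},\widehat{H}})^2(\omega)=0$ for $\omega\in\Omega^1(M)$ (and the analogous trivial check for $\omega\in\Omega^0(M)=C^\infty(M)$, where $D_{\widehat{Z},\widehat{H}}(f)=df+ :H^3 f:+:\widehat A\widehat H^2 f:= :\widehat{H}f: + df$ by the same inspection). Applying $D_{\widehat{Z},\widehat{H}}$ twice to $\omega$ and using that inside $\langle\Omega(M)\rangle$ the normally ordered product is graded-commutative and associative (so it behaves exactly like the wedge product on the module of forms), the computation collapses to $(d+\widehat{H}\wedge)^2\omega = d^2\omega + d(\widehat{H}\wedge\omega) + \widehat{H}\wedge d\omega + \widehat{H}\wedge\widehat{H}\wedge\omega$. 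The first term is zero, the middle two cancel because $d\widehat{H}=0$ (recall $\widehat{H}=H^3+\widehat A\wedge\widehat H^2$ is closed, $d\widehat{H}^2 = 0$ and $H^3$ is basic and closed under the assumptions $H^2=F_{\widehat A}$, $\widehat H^2=F_A$) and $\widehat{H}$ has odd degree, and the last term vanishes by degree parity. The main obstacle here is bookkeeping: one must confirm that the various $:(\widehat A\,\widehat H^2)\,\cdot: $ versus $:\widehat A\,\widehat H^2\,\cdot:$ reorderings in \eqref{actiond6} (which differ by a $\partial\widehat A(\cdots\widehat H^2)$ correction term) do not contribute spurious terms when composed; but since on $\langle\Omega(M)\rangle$ there are no fields like $\iota_X$ or $\iota_{\widehat A}$ for the correction terms to act on nontrivially, these subtleties disappear and the identity reduces cleanly to $d_{\widehat H}^2=0$ on ordinary forms. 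I would present the proof as: (i) identify $D_{\widehat{Z},\widehat{H}}|_{\langle\Omega(M)\rangle}$ with $d_{\widehat H}$; (ii) note it is a derivation of the abelian product; (iii) conclude $(D_{\widehat{Z},\widehat{H}})^2=0$ from $d_{\widehat H}^2=0$ together with $d\widehat H=0$.
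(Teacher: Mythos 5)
Your proposal is correct in substance and follows essentially the same route as the paper: all of $D^0,\dots,D^5$ vanish on $\langle\Omega(M)\rangle$, so $D_{\widehat{Z},\widehat{H}}$ reduces there to $D_{\widehat{Z}}+D^6$, which is the twisted differential $d+\widehat{H}\wedge$, and squares to zero because $d\widehat{H}=0$ and $\widehat{H}$ is odd. One inaccuracy to fix: $D^6=\widehat{H}_{(-1)}$ is a $(-1)$-mode, i.e.\ left multiplication by $\widehat{H}$, not a vertex algebra derivation (only zero modes are), so you cannot argue that $(D_{\widehat{Z},\widehat{H}})^2$ is a derivation and check it on generators alone; the paper instead uses the module-derivation identity \eqref{modderivation}, and your own direct computation $(d+\widehat{H}\wedge)^2=(d\widehat{H})\wedge\cdot+\widehat{H}\wedge\widehat{H}\wedge\cdot=0$, valid on every element of the abelian subalgebra, already supplies the correct argument. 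Also note that your parenthetical claim that $H^3$ is closed is not needed and is false in general ($dH^3=-\widehat{H}^2\wedge H^2$); only the closedness of the full flux $\widehat{H}=H^3+\widehat{A}\wedge\widehat{H}^2$ is used.
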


\begin{proof} First, $D^0$, $D^1$, $D^2$, $D^3$, $D^4$, and $D^5$ vanish on $\langle \Omega(M) \rangle$, so $D_{\widehat{Z}, \widehat{H}} = D_{\widehat{Z}} + D^6$. Moreover, $D_{\widehat{Z}}$ is a vertex algebra derivation on $\langle \Omega(M) \rangle$ and $D_{\widehat{Z}, \widehat{H}}$ is a derivation on $\langle \Omega(M) \rangle$ in the category of modules over $\langle \Omega(M) \rangle$. In other words, for all $a,b \in \langle \Omega(M) \rangle$ and $k \in \mathbb{Z}$, we have \begin{equation} \label{modderivation} 
\begin{split} &  D_{\widehat{Z}}(a_{(k)} b) = (D_{\widehat{Z}}(a))_{(k)} b + (-1)^{|a|} a_{(k)} D_{\widehat{Z}}(b),
\\ & D_{\widehat{Z}, \widehat{H}}(a_{(k)} b) = D_{\widehat{Z}}(a)_{(k)} b + (-1)^{|a|} a_{(k)} D_{\widehat{Z}, \widehat{H}}(b). \end{split} \end{equation} 
Since $D_{\widehat{Z}}$ and $D^6$ are commuting differentials which are both square-zero, the claim follows.
\end{proof}

\begin{lemma} $D_{\widehat{Z}, \widehat{H}}$ is a square-zero operator on the subalgebra $\mathcal{A}^{{\rm ch}, \widehat{H}}_0(\widehat{Z})^{i \mathbb{R}[t]}$.
\end{lemma}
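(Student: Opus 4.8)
The plan is to reduce the statement to a finite check on the generators by first establishing that $D_{\widehat{Z},\widehat{H}}^2$ is a \emph{derivation} of the vertex algebra, even though $D_{\widehat{Z},\widehat{H}}$ itself is not. Write $D_{\widehat{Z},\widehat{H}} = \delta + \epsilon$ with $\delta = D_{\widehat{Z}} + D^0 + D^1 + D^2 + D^3$ (a vertex algebra derivation, as already noted, being $D_{\widehat{Z}}$ plus zero modes of fields) and $\epsilon = D^4 + D^5 + D^6$. Recall that $\mathcal{A}^{\text{ch},\widehat{H}}_0(\widehat{Z})^{i\mathbb{R}[t]}$ is strongly generated by $\Omega(M)$ together with $\iota_X, L_X, \widehat{A}, \iota_{\widehat{A}}, L_A$ (for horizontal $X$), that $L_A$ has vanishing OPE with all the other generators of this subalgebra, and that $D_{\widehat{Z},\widehat{H}}^2 = 0$ already holds on $\langle\Omega(M)\rangle$ by the preceding lemma.

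Since $D_{\widehat{Z},\widehat{H}}^2 = \delta^2 + [\delta,\epsilon] + \epsilon^2$ (all three operators odd), I would verify: (i) $\delta^2$ is an even derivation, automatically; (ii) $[\delta,\epsilon] = 0$, using $[\delta, a_{(m)}] = (\delta a)_{(m)}$ together with $\delta(:\iota_{\widehat{A}}L_A:) = \delta(:H^2\iota_{\widehat{A}}:) = \delta(\widehat{H}) = 0$ — the first two hold because $\delta(\iota_{\widehat{A}}) = 0$, $\delta(L_A) = 0$, $\delta(H^2) = dH^2 = 0$ (read off \eqref{actiondtildez}--\eqref{actiond3}), and the third because $\delta(\widehat{A}) = H^2$ gives $\delta(\widehat{H}) = dH^3 + :H^2\widehat{H}^2:$, which vanishes by the structure equation $dH^3 = -H^2\wedge\widehat{H}^2$ of the T-dual pair; (iii) $\epsilon^2 = (D^4)^2 + (D^5)^2 + (D^6)^2 + [D^4,D^5] + [D^4,D^6] + [D^5,D^6]$ is again a derivation: the three squares and $[D^4,D^5]$ vanish because on this subalgebra the fields $:\iota_{\widehat{A}}L_A:$, $:H^2\iota_{\widehat{A}}:$, $\widehat{H}$ have no mutual or self contractions ($L_A$ is central, $\iota_{\widehat{A}}$ contracts only against $\widehat{A}$, and $\widehat{H}$ contains no $\iota_{\widehat{A}}$), while the commutator formula collapses $[D^4,D^6]$ and $[D^5,D^6]$ to the zero modes $\pm(:L_A\widehat{H}^2:)_{(0)}$ and $\pm(H^2\wedge\widehat{H}^2)_{(0)}$. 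Hence $D_{\widehat{Z},\widehat{H}}^2 = \delta^2 + \epsilon^2$ is a vertex algebra derivation.

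A vertex algebra derivation vanishes as soon as it vanishes on a strong generating set, so it now suffices to check $D_{\widehat{Z},\widehat{H}}^2(\nu) = 0$ for $\nu \in \{\iota_X, L_X, \widehat{A}, \iota_{\widehat{A}}, L_A\}$, the case $\nu \in \Omega(M)$ being the preceding lemma. For each $\nu$ I would read off $D_{\widehat{Z},\widehat{H}}(\nu)$ by summing the relevant rows of \eqref{actiondtildez} and \eqref{actiond0}--\eqref{actiond6}, then apply $D_{\widehat{Z},\widehat{H}} = \delta + \epsilon$ again, using the derivation property for $\delta$ and the non-commutative Wick (Borcherds) formula to push the first modes $D^4, D^5$ and the $(-1)$-mode $D^6$ through the normally ordered products that arise. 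The cases $\nu = \widehat{A}, \iota_{\widehat{A}}, L_A$ are short; for instance $D_{\widehat{Z},\widehat{H}}(L_A) = :H^3 L_A: + :\widehat{A}\widehat{H}^2 L_A: = :\widehat{H}L_A:$, and $D_{\widehat{Z},\widehat{H}}^2(L_A)$ then collapses to $0$ using $(D^6)^2 = 0$, $\delta(\widehat{H}) = 0$, $\delta(L_A) = 0$ and the triviality of the OPEs of $\widehat{H}$ and $L_A$ with $L_A$.

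The hard part will be the checks for $\nu = \iota_X$ and $\nu = L_X$. Already $D_{\widehat{Z},\widehat{H}}(\iota_X)$ and $D_{\widehat{Z},\widehat{H}}(L_X)$ are sums of several iterated normally ordered products such as $:\widehat{A}\widehat{H}^2\iota_X:$ and $:\partial\widehat{A}(\iota_X\widehat{H}^2):$, and because $D_{\widehat{Z},\widehat{H}}$ fails to be a derivation, moving $D^4, D^5, D^6$ across a normal ordering introduces quasi-associativity corrections; the second application therefore generates many terms that must be regrouped via quasi-associativity and skew-symmetry and then cancelled using the structure equations $H^3 = \widehat{H}^3$, $H^2 = d\widehat{A}$, $\widehat{H}^2 = dA$, $dH = d\widehat{H} = 0$. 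This is exactly the delicate, nonassociativity-dependent computation the introduction alludes to; the virtue of the first two paragraphs is that it confines this computation to the short list of generators above (the analogous lemmas for the larger subalgebras $\mathcal{A}^{\text{ch},\widehat{H}}_0(\widehat{Z})$ and $\mathcal{A}^{\text{ch},\widehat{H}}(\widehat{Z})$ will then only require adjoining $\Gamma^A$ and, in local coordinates, $s^n_\alpha$ to the same check).
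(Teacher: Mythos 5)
Your argument is correct, and it rests on the same decomposition as the paper's: your $\delta$ is exactly the paper's derivation part $D_{\text{Der}}=D_{\widehat{Z}}+D^0+D^1+D^2+D^3$, and the identities you use --- $\delta(:\iota_{\widehat{A}}L_A:)=\delta(:H^2\iota_{\widehat{A}}:)=\delta(\widehat{H})=0$ (the last via $dH^3=-H^2\wedge\widehat{H}^2$, which follows from $dH=0$), together with the collapse of the anticommutators of $D^4,D^5$ with $D^6$ to the zero modes $(:L_A\widehat{H}^2:)_{(0)}$ and $(:H^2\widehat{H}^2:)_{(0)}$ --- are precisely the ones the paper invokes. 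Where you genuinely diverge is in the reduction to generators. The paper never records that $(D_{\widehat{Z},\widehat{H}})^2$ is itself a vertex algebra derivation; instead it writes a general element as a monomial $:(\partial^{i_1}\mu_1)\cdots(\partial^{i_r}\mu_r)\eta:$ and runs an induction on the length $r$, adding and subtracting $:(\partial^{i_1}\mu_1)\,(D_{\text{NDer}}D_{\widehat{H}}+D_{\widehat{H}}D_{\text{NDer}})(\nu'):$ at each step. Your observation that $(D_{\widehat{Z},\widehat{H}})^2=\delta^2+\epsilon^2$ with $\delta^2$ the square of an odd derivation and $\epsilon^2$ a sum of zero modes --- the $(-1)$-mode contributions to $[D^4,D^6]$ and $[D^5,D^6]$ vanishing because the only available contraction, $\iota_{\widehat{A}}$ against $\widehat{A}$, is a first-order pole --- makes the induction unnecessary and confines everything to a strong generating set in one stroke; it buys a shorter proof and, as you note, streamlines the subsequent lemmas where $\Gamma^A$ and $s^n_\alpha$ are adjoined. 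What you must still write out are the generator checks for $\iota_X$ and $L_X$: in your formulation this amounts to verifying $\delta^2(\iota_X)=\ :L_A(\iota_X\widehat{H}^2):+:H^2(\iota_X\widehat{H}^2):+:\widehat{H}^2(\iota_XH^2):$ and that the two zero modes contribute exactly the negatives of these terms (and similarly for $L_X$), which is the computation the paper displays.
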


\begin{proof} This argument is more difficult than the proof of the previous lemma because the terms $D^4$ and $D^5$ fail to be vertex algebra derivations. We define \begin{equation} \begin{split} & D_{\text{Der}} = D_{\widehat{Z}} + D^0 + D^1 + D^2 +D^3,
\\ & D_{\text{NDer}} = D^4+ D^5,
\\ & D_{\widehat{H}} = D^6. \end{split} \end{equation}
In this notation, $D_{\text{Der}}$ is a vertex algebra derivation, $D_{\text{NDer}}$ is not a derivation, and 
$D_{\widehat{Z}, \widehat{H}} =  D_{\text{Der}}  + D_{\text{NDer}}  + D_{\widehat{H}}$. Then for all $\nu \in \mathcal{A}^{\text{ch}, \widehat{H}}_0(\widehat{Z})^{i \mathbb{R}[t]}$, 
\begin{equation} \begin{split} (D_{\widehat{Z}, \widehat{H}})^2  (\nu) & = (D_{\text{Der}})^2(\nu) +  (D_{\text{NDer}})^2(\nu) +  (D_{\widehat{H}})^2(\nu) 
\\ & + (D_{\text{Der}}  D_{\text{NDer}} + D_{\text{NDer}}  D_{\text{Der}})(\nu) 
\\ & +  (D_{\text{Der}}  D_{\widehat{H}} + D_{\widehat{H}}  D_{\text{Der}})(\nu) 
\\ & +  (D_{\text{NDer}}  D_{\widehat{H}} + D_{\widehat{H}}  D_{\text{NDer}})(\nu)
\\ & =  (D_{\text{Der}})^2(\nu) +  (D_{\text{NDer}}  D_{\widehat{H}} + D_{\widehat{H}}  D_{\text{NDer}})(\nu).
\end{split} \end{equation}
Here were are using the fact that $D_{\text{Der}}$ is a vertex algebra derivation which annihilates the fields $H^3 + :\widehat{A} \widehat{H}^2:$, $:\iota_{\widehat{A}} H^2:$ and $:\iota_{\widehat{A}} L_A:$, so that  $(D_{\text{Der}}  D_{\text{NDer}} + D_{\text{NDer}}  D_{\text{Der}})(\nu) = 0$ and $ (D_{\text{Der}}  D_{\widehat{H}} + D_{\widehat{H}}  D_{\text{Der}})(\nu) = 0$. Also, it is apparent that $ (D_{\text{NDer}})^2(\nu)=0$ and $(D_{\widehat{H}})^2(\nu)$.

Next, we check that $(D_{\widehat{Z}, \widehat{H}})^2$ annihilates the additional generators $\widehat{A}, \iota_{\widehat{A}}, \iota_X, L_X$ that appear in  $\mathcal{A}^{\text{ch}, \widehat{H}}_0(\widehat{Z})^{i \mathbb{R}[t]}$ but not in $\langle \Omega(M) \rangle$. This follows from the following computations.
\begin{equation}\begin{split} & (D_{\text{Der}})^2(\widehat{A})  = 0,\qquad  (D_{\text{NDer}}  D_{\widehat{H}} + D_{\widehat{H}}  D_{\text{NDer}})(\widehat{A}) = 0,
\\  & (D_{\text{Der}})^2(\iota_{\widehat{A}})  = 0,\qquad  (D_{\text{NDer}}  D_{\widehat{H}} + D_{\widehat{H}}  D_{\text{NDer}})(\iota_{\widehat{A}}) = 0,
\\ & (D_{\text{Der}})^2(\iota_X)  = \ :L_A( \iota_X \widehat{H}^2): + :H^2 (\iota_X \widehat{H}^2): + \widehat{H}^2 (\iota_X H^2):,
\\ &  (D_{\text{NDer}}  D_{\widehat{H}} + D_{\widehat{H}}  D_{\text{NDer}})(\iota_X) = - :L_A( \iota_X \widehat{H}^2): - :H^2 (\iota_X \widehat{H}^2): - :\widehat{H}^2 (\iota_X H^2):,
\\ & (D_{\text{Der}})^2(L_X)  = \ :L_A( L_X \widehat{H}^2): + :H^2 (L_X \widehat{H}^2): + :\widehat{H}^2 (L_X H^2):,
\\ &  (D_{\text{NDer}}  D_{\widehat{H}} + D_{\widehat{H}}  D_{\text{NDer}})(L_X) = - :L_A( L_X \widehat{H}^2): - :H^2 (L_X \widehat{H}^2): - :\widehat{H}^2 (L_X H^2):.
\end{split} \end{equation}

Next, a general element of $\mathcal{A}^{\text{ch}, \widehat{H}}_0(\widehat{Z})^{i \mathbb{R}[t]}$ can be expressed as a finite sum of terms of the form
$$ \nu =\ :(\partial^{i_1} \mu_1) \cdots (\partial^{i_r} \mu_r) \eta: , \qquad i_1,\dots, i_r \geq 0,\qquad \eta \in \langle \Omega(M) \rangle,$$ where each $\mu_i$ is one of the generators $\iota_X, L_X, \iota_{\widehat{A}}, \widehat{A}, L_A$.  We say that such a monomial has {\it length} $r$. By the previous lemma, $(D_{\widehat{Z}, \widehat{H}})^2(\nu) = 0$ whenever $\nu$ has length $0$. Inductively, we assume that  $(D_{\widehat{Z}, \widehat{H}})^2(\nu) =0$ whenever $\nu$ is such a monomial of length at most $r-1$. In particular, this means that 
\begin{equation} \label{inductivecond}(D_{\text{Der}})^2(\nu) +  (D_{\text{NDer}}  D_{\widehat{H}} + D_{\widehat{H}}  D_{\text{NDer}})(\nu) = 0.\end{equation}
Now let $\nu = \ :(\partial^{i_1} \mu_1) \cdots (\partial^{i_r} \mu) \eta:$ be a monomial of length $r$ as above, and write 
$$\nu =  \ :(\partial^{i_1} \mu_1) \nu':,\qquad \nu' = \ :(\partial^{i_2} \mu_2) \cdots (\partial^{i_r} \mu_r) \eta:.$$ 
Then
\begin{equation} \begin{split} \label{irtfirst} (D_{\widehat{Z}, \widehat{H}})^2 (\nu)  =  & (D_{\widehat{Z}, \widehat{H}})^2 (:(\partial^{i_1} \mu_1) \nu':)  
\\  = & \ :(D_{\text{Der}})^2 (\partial^{i_1} \mu_1) \nu':+ :(\partial^{i_1} \mu_1) (D_{\text{Der}})^2(\nu'): 
\\ &  +  (D_{\text{NDer}}  D_{\widehat{H}} + D_{\widehat{H}}  D_{\text{NDer}})(:(\partial^{i_1} \mu_1) \nu':))
\\  = &  \ :(D_{\text{Der}})^2 (\partial^{i_1} \mu_1) \nu':+ :(\partial^{i_1} \mu_1) (D_{\text{Der}})^2(\nu'): 
\\ & +  (D_{\text{NDer}}  D_{\widehat{H}} + D_{\widehat{H}}  D_{\text{NDer}})(:(\partial^{i_1} \mu_1) \nu':))
\\ & - :(\partial^{i_1} \mu_1) (D_{\text{NDer}}  D_{\widehat{H}} + D_{\widehat{H}}  D_{\text{NDer}})(\nu'): 
\\ & + :(\partial^{i_1} \mu_1) (D_{\text{NDer}}  D_{\widehat{H}} + D_{\widehat{H}}  D_{\text{NDer}})(\nu'):
\\  = &  \ :(D_{\text{Der}})^2 (\partial^{i_1} \mu_1) \nu':
\\ & +  (D_{\text{NDer}}  D_{\widehat{H}} + D_{\widehat{H}}  D_{\text{NDer}})(:(\partial^{i_1} \mu_1) \nu':))
\\ & - :(\partial^{i_1} \mu_1) (D_{\text{NDer}}  D_{\widehat{H}} + D_{\widehat{H}}  D_{\text{NDer}})(\nu'): 
.\end{split} \end{equation}
The last equality follows from our inductive assumption \eqref{inductivecond} in the case $\nu = \nu'$.

A separate calculation in each of the cases $\mu_1 =\iota_X, L_X, \iota_{\widehat{A}}, \widehat{A}, L_A$ shows that in all cases,
$(D_{\widehat{Z}, \widehat{H}})^2 (\nu)  = 0$. To illustrate this, we include the calculation in the case where $i_1 = 0$ and $\mu = \iota_X$. We have
\begin{equation} \label{irtsecond} \begin{split}  :((D_{\text{Der}})^2(\iota_X)) \nu': \  =  &  \  :(:H^2 (\iota_X \widehat{H}^2):) \nu': + :(:\widehat{H}^2 (\iota_X H^2):)\nu':,
\\ & \ + :( :L_A( \iota_X \widehat{H}^2):) \nu':
\\  (D_{\text{NDer}}  D_{\widehat{H}} + D_{\widehat{H}}  D_{\text{NDer}})(: \iota_X \nu':)  = & \  (:H^2 \widehat{H}^2:)_{(0)} (:\iota_X \nu':) 
+ (:L_A \widehat{H}^2:)_{(0)} (:\iota_X \nu':) 
\\  = & \  -  :(:H^2 (\iota_X \widehat{H}^2):) \nu': - :(:\widehat{H}^2 (\iota_X H^2):)\nu':
\\ & \ -  :( :L_A( \iota_X \widehat{H}^2):) \nu':
\\ & \ +  :\iota_X ((:H^2 \widehat{H}^2:)_{(0)} \nu'): + :\iota_X ((:L_A \widehat{H}^2:)_{(0)} \nu'):,
\\ \  - :\iota_X (D_{\text{NDer}}  D_{\widehat{H}} + D_{\widehat{H}}  D_{\text{NDer}})(\nu'): \  =  &\  - :\iota_X ((H^2 \widehat{H}^2:)_{(0)} \nu'): - :\iota_X ((L_A \widehat{H}^2:)_{(0)} \nu'):
\end{split}
\end{equation} The fact that $(D_{\widehat{Z}, \widehat{H}})^2 (: \iota_X \nu':)  = 0$ then follows immediately from \eqref{irtfirst} and \eqref{irtsecond}. The proof for the other cases is similar and is omitted. \end{proof}

\begin{lemma} \label{invariantsector} $D_{\widehat{Z}, \widehat{H}}$ is a square-zero operator on $\mathcal{A}^{{\rm ch}, \widehat{H}}_0(\widehat{Z})$.
\end{lemma}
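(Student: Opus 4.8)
The plan is to enlarge the argument of the previous lemma to cover the one remaining strong generator $\Gamma^A$. Recall that $\mathcal{A}^{\text{ch},\widehat{H}}_0(\widehat{Z})$ is generated by the generators of the subalgebra $\mathcal{A}^{\text{ch},\widehat{H}}_0(\widehat{Z})^{i\mathbb{R}[t]}$ (which is $D_{\widehat{Z},\widehat{H}}$-stable, and on which $D_{\widehat{Z},\widehat{H}}^2=0$ by the previous lemma) together with $\Gamma^A$, and that $L_A$ and $\Gamma^A$ generate a rank-two Heisenberg subalgebra. Consequently every element of $\mathcal{A}^{\text{ch},\widehat{H}}_0(\widehat{Z})$ is a finite sum of normally ordered monomials $:(\partial^{j_1}\Gamma^A)\cdots(\partial^{j_s}\Gamma^A)\,\nu:$ with $\nu\in\mathcal{A}^{\text{ch},\widehat{H}}_0(\widehat{Z})^{i\mathbb{R}[t]}$; I would call $s$ the \emph{$\Gamma^A$-length} and prove $D_{\widehat{Z},\widehat{H}}^2=0$ by induction on it, the case $s=0$ being exactly the previous lemma. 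As there, I would use the splitting $D_{\widehat{Z},\widehat{H}}=D_{\text{Der}}+D_{\text{NDer}}+D_{\widehat{H}}$, so that $D_{\widehat{Z},\widehat{H}}^2=(D_{\text{Der}})^2+(D_{\text{NDer}}D_{\widehat{H}}+D_{\widehat{H}}D_{\text{NDer}})$, with $(D_{\text{Der}})^2$ an even vertex algebra derivation and the second summand a non-derivation to be controlled by the length induction.

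The decisive ingredient is the length-one base case, $D_{\widehat{Z},\widehat{H}}^2(\partial^j\Gamma^A)=0$ for all $j\geq 0$. Summing the contributions \eqref{actiondtildez} and \eqref{actiond0}--\eqref{actiond6}, the terms $\pm\partial\iota_{\widehat{A}}$ from $D_{\widehat{Z}}$ and $D^2$ cancel and one is left with
$$D_{\widehat{Z},\widehat{H}}(\partial^j\Gamma^A)=(1+j)\,\partial^j\iota_{\widehat{A}}-\partial^j\xi^A+\,:\widehat{H}(\partial^j\Gamma^A):,\qquad \widehat{H}=H^3+:\widehat{A}\widehat{H}^2:,$$
where the coefficient $1+j$ arises from $[D^4,\partial]=-D^2$ and $[D^2,\partial]=0$ (so $D^4$, a genuine first mode, is responsible for it). Applying $D_{\widehat{Z},\widehat{H}}$ again: the pieces $D_{\widehat{Z},\widehat{H}}(\partial^j\iota_{\widehat{A}})$ and $D_{\widehat{Z},\widehat{H}}(\partial^j\xi^A)$ are supplied by the previous lemma (both $\iota_{\widehat{A}}$ and $\xi^A$ lie in $\mathcal{A}^{\text{ch},\widehat{H}}_0(\widehat{Z})^{i\mathbb{R}[t]}$), while $D_{\widehat{Z},\widehat{H}}(:\widehat{H}(\partial^j\Gamma^A):)$ is computed through the splitting: $D_{\text{Der}}$ acts by the graded Leibniz rule and annihilates $\widehat{H}$, so only its action on $\partial^j\Gamma^A$ contributes; $D_{\widehat{H}}=D^6$ gives $:\widehat{H}:\widehat{H}(\partial^j\Gamma^A)::$, which collapses to a multiple of $\widehat{H}\wedge\widehat{H}=0$ because $\widehat{H}$ has odd form-degree and trivial singular OPE against $\Gamma^A$; and $D^4,D^5$ are expanded by the noncommutative Wick formula applied to $:\iota_{\widehat{A}}L_A:$ and $:H^2\iota_{\widehat{A}}:$ contracted against $:\widehat{H}(\partial^j\Gamma^A):$. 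Carrying the quasi-associativity and quasi-commutativity corrections --- of the type $:(\widehat{A}\widehat{H}^2)\iota_{\widehat{A}}:-:\widehat{A}\widehat{H}^2\iota_{\widehat{A}}:=\partial\widehat{H}^2$ recorded in \eqref{actiond6}, and $:\iota_{\widehat{A}}\widehat{H}:+:\widehat{H}\iota_{\widehat{A}}:=\partial\widehat{H}^2$ --- through all of these terms, one checks that everything cancels; for $j=0$ this already exhibits $D_{\widehat{Z},\widehat{H}}(\Gamma^A)=\iota_{\widehat{A}}-\xi^A+:\widehat{H}\Gamma^A:$ and then $D_{\widehat{Z},\widehat{H}}^2(\Gamma^A)=0$.

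For the inductive step I would write a $\Gamma^A$-length $s$ monomial as $:(\partial^{j_1}\Gamma^A)\,\nu':$ with $\nu'$ of length $s-1$ and run the computation \eqref{irtfirst}--\eqref{irtsecond} verbatim: the derivation $D_{\text{Der}}$ distributes over the normally ordered product, so the inductive hypothesis on $\nu'$ together with the base case on $\partial^{j_1}\Gamma^A$ removes $(D_{\text{Der}})^2$ up to the cross term $D_{\text{NDer}}D_{\widehat{H}}+D_{\widehat{H}}D_{\text{NDer}}$ acting on the two factors, and the remaining correction, governed by the noncommutative Wick formula for the zero- and first-modes $D^4,D^5,D^6$ acting on a normally ordered product, is checked to vanish in each of the cases $\mu_1=\partial^{j_1}\Gamma^A$, exactly as in \eqref{irtsecond}.

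I expect the length-one base case to be the main obstacle rather than the induction. Since $\Gamma^A$ is the unique generator with a nontrivial self-contraction (the $(z-w)^{-2}$ OPE against $L_A$) and since $D^4$ is a genuine first mode and hence not a derivation, the vanishing $D_{\widehat{Z},\widehat{H}}^2(\partial^j\Gamma^A)=0$ cannot be obtained by any purely formal manipulation and rests on the precise quasi-associativity and quasi-commutativity identities of the normally ordered product. The higher derivatives $j\geq 1$ carry the additional difficulty that $D_{\widehat{Z},\widehat{H}}$ does not commute with $\partial$ --- the defect $[D_{\widehat{Z},\widehat{H}},\partial]$ being a combination of $D^1$, $D^2$ and $(\partial\widehat{H})_{(-1)}$ --- so each $j$ must be treated directly rather than deduced from $j=0$. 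Once the base case is in hand the length induction is lengthy but routine bookkeeping modeled on the previous lemma.
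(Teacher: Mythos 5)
Your proposal is correct and follows essentially the same route as the paper: the same splitting $D_{\widehat{Z},\widehat{H}}=D_{\text{Der}}+D_{\text{NDer}}+D_{\widehat{H}}$, the same induction on the number of $\partial^{j}\Gamma^A$ factors in a normally ordered monomial over the $i\mathbb{R}[t]$-invariant subalgebra, and the same identification of the quasi-associativity correction $:(:\widehat{A}\widehat{H}^2:)\iota_{\widehat{A}}:-:\widehat{A}\widehat{H}^2\iota_{\widehat{A}}:=\partial\widehat{H}^2$ as the decisive cancellation in the base case. Your explicit formula $D_{\widehat{Z},\widehat{H}}(\Gamma^A)=\iota_{\widehat{A}}-\xi^A+:\widehat{H}\Gamma^A:$ and the coefficient $1+j$ on $\partial^{j}\iota_{\widehat{A}}$ are consistent with the paper's tabulated actions, which the paper packages instead as $(D_{\text{Der}})^2(\partial^{j}\Gamma^A)=-\partial^{j+1}\widehat{H}^2$ cancelling against $(D_{\text{NDer}}D_{\widehat{H}}+D_{\widehat{H}}D_{\text{NDer}})(\partial^{j}\Gamma^A)=\partial^{j+1}\widehat{H}^2$.
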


\begin{proof} Recall that $\mathcal{A}^{\text{ch}, \widehat{H}}_0(\widehat{Z})$ has one additional generator $\Gamma^A$ in addition to the generators of $\mathcal{A}^{\text{ch}, \widehat{H}}_0(\widehat{Z})^{i \mathbb{R}[t]}$. First, we compute

\begin{equation} \begin{split}  (D_{\text{Der}})^2 (\Gamma^A) = & \ -\partial \widehat{H}^2,
\\ (D_{\text{NDer}}  D_{\widehat{H}} + D_{\widehat{H}}  D_{\text{NDer}})(\Gamma^A) & = \partial \widehat{H}^2. \end{split} \end{equation}
 It is immediate that $(D_{\widehat{Z}, \widehat{H}})^2(\Gamma^A) = 0$. An essential feature of this calculation is the fact that 
$$ :(:\widehat{A} \widehat{H}^2:) \iota_{\widehat{A}}: \ =\  :\widehat{A} \widehat{H}^2 \iota_{\widehat{A}}: + \partial \widehat{H}^2$$ which is due to the nonassociativity of the normally ordered product. Similarly, one checks easily that for all $i \geq 0$,
\begin{equation} \begin{split} & (D_{\text{Der}})^2  (\partial^i \Gamma^A) = -\partial^{i+1}\widehat{H}^2, \qquad  (D_{\text{NDer}}  D_{\widehat{H}} + D_{\widehat{H}}  D_{\text{NDer}})(\partial^i \Gamma^A) = \partial^{i+1}\widehat{H}^2.\end{split} \end{equation}
Next, a general element of $\mathcal{A}^{\text{ch}, \widehat{H},\bar{k}}_0(\widehat{Z})$ can be expressed as a finite sum of terms of the form
$$ \nu =\ :(\partial^{i_1} \Gamma^A) \cdots (\partial^{i_r} \Gamma^A) \eta:, \qquad i_1,\dots, i_r \geq 0,\qquad \eta \in \mathcal{A}^{\text{ch}, \widehat{H},\bar{k}}_0(\widehat{Z})^{i\mathbb{R}[t]}.$$ We say that such a monomial $\nu$ has {\it length} $r$. By the previous lemma, $(D_{\widehat{Z}, \widehat{H}})^2(\nu) = 0$ whenever $\nu$ has length $0$. Inductively, we assume that  $(D_{\widehat{Z}, \widehat{H}})^2(\nu) =0$ whenever $\nu$ is such a monomial of length at most $r-1$.

Now let $\nu = \ :(\partial^{i_1} \Gamma^A) \cdots (\partial^{i_r} \Gamma^A) \eta:$ be a monomial of length $r$ as above, and write 
$$\nu =  \ :(\partial^{i_1} \Gamma^A) \nu',\qquad \nu' = \ :(\partial^{i_2} \Gamma^A) \cdots (\partial^{i_r} \Gamma^A) \eta:.$$ By the same calculation as \eqref{irtfirst}, we have
\begin{equation} \begin{split} \label{a0first}(D_{\widehat{Z}, \widehat{H}})^2 (\nu)   = & (D_{\widehat{Z}, \widehat{H}})^2 (:(\partial^{i_1} \Gamma^A) \nu':) 
\\  = &  \ :(D_{\text{Der}})^2 (\partial^{i_1}  \Gamma^A) \nu':
\\ & \ +  (D_{\text{NDer}}  D_{\widehat{H}} + D_{\widehat{H}}  D_{\text{NDer}})(:(\partial^{i_1}  \Gamma^A) \nu':))
\\ & \ - :(\partial^{i_1}  \Gamma^A) (D_{\text{NDer}}  D_{\widehat{H}} + D_{\widehat{H}}  D_{\text{NDer}})(\nu'): .\end{split} \end{equation}
We compute
\begin{equation} \label{a0second} \begin{split}   \ :((D_{\text{Der}})^2 (\partial^{i_1}  \Gamma^A)) \nu':\  & =   \  - :(\partial^{i_1+1} \widehat{H}^2) \nu':, 
\\  (D_{\text{NDer}}  D_{\widehat{H}} + D_{\widehat{H}}  D_{\text{NDer}})(:(\partial^{i_1}  \Gamma^A) \nu':)) & =  \   :(\partial^{i_1+1} \widehat{H}^2) \nu': +  :(\partial^{i_1}  \Gamma^A) (D_{\text{NDer}}  D_{\widehat{H}} + D_{\widehat{H}}  D_{\text{NDer}})(\nu'):. \end{split}
\end{equation} The claim is immediate from \eqref{a0first} and \eqref{a0second}. \end{proof}

We are now ready to prove Theorem \ref{sec5main}.

\noindent {\it Proof of Theorem \ref{sec5main}}. We have shown this for the sector $\mathcal{A}^{\text{ch}, \widehat{H}}_0(\widehat{Z})$, so it suffices now to prove it for $\mathcal{A}^{\text{ch}, \widehat{H}}_n(\widehat{Z})$ for all $n \neq 0$. First, we check this on the element $s^n_{\alpha}$ expressed in local coordinates. Since $A = A_{\alpha,\text{bas}} + d \theta_{\alpha}$, we have 
\begin{equation} \label{snfirst} dA = \widehat{H}^2 = d A_{\alpha,\text{bas}}.\end{equation} 
Using \eqref{ope:exotic}, we compute
\begin{equation} \label{snsecond} D_{\text{NDer}}(:\widehat{A} \widehat{H}^2 s^n_{\alpha}:) = -  n :\widehat{H}^2 s^n_{\alpha}:. \end{equation} Combining \eqref{snfirst} and \eqref{snsecond}, we obtain
\begin{equation} \begin{split}  (D_{\text{Der}})^2 (s^n_{\alpha}) = & \ n:\widehat{H^2} s^n_{\alpha}:, \\ (D_{\text{NDer}}  D_{\widehat{H}} + D_{\widehat{H}}  D_{\text{NDer}})(s^n_{\alpha}) & = -n :\widehat{H^2} s^n_{\alpha}:. \end{split} \end{equation}
 It is immediate that $(D_{\widehat{Z}, \widehat{H}})^2(s^n_{\alpha}) = 0$.

Next, a general element $\nu = \mathcal{A}^{\text{ch}, \widehat{H}}_n(\widehat{Z})$ has the form $\nu = \ :s^n_{\alpha} \eta:$ for some $\eta \in \mathcal{A}^{\text{ch}, \widehat{H}}_0(\widehat{Z})$. Since $D_{\widehat{Z}, \widehat{H}}(\eta) = 0$, the same argument as previous lemma shows that $D_{\widehat{Z}, \widehat{H}}(:s^n_{\alpha} \eta:) = 0$. This completes the proof of Theorem \ref{sec5main}.

\section{The case of trivial bundles}
In this section, we assume that both circle bundles $Z$ and $\widehat{Z}$ are trivial, 
$$Z=M\times \TT, \qquad \widehat Z=M\times \widehat{\TT},$$ and that both fluxes $H, \widehat H$ are zero. Then $Z, \widehat{Z}$ have global coordinates $\theta, \widehat{\theta}$ in the circle directions which are defined up to shifts by $2\pi i k$ for $k \in \mathbb{Z}$. The connection forms $A, \widehat{A}$ can be identified with $d\theta, d\widehat{\theta}$, respectively. 

Let $\omega_{-n}\in \Omega^{\bar{0}}(Z)_{-n}$ be an element of even degree. It has the form $ \omega_{-n} = (\lambda_0+\lambda_1d\theta)e^{-n\theta}$, where $\lambda_0, \lambda_1$ are forms on $M$. Then since $\lambda_0$ is even and $\lambda_1$ is odd, by definition we have
$$\tau_n (\omega_{-n})= (\lambda_0 d\widehat\theta+\lambda_1)s^{n}, \qquad \widehat \sigma_n((\lambda_0 d\widehat\theta+\lambda_1)s^{n})=- (\lambda_0+\lambda_1d\theta)e^{ -n\theta}.$$
Suppose $d\omega_{-n}=0$. We then have 
$$d\lambda_0=0, \qquad d\lambda_1 - n\lambda_0=0.$$
Then
$$(d-\iota_{n\widehat v} )\tau_n (\omega_{-n})= (d \lambda_0)(d \widehat{\theta}) s^n + (d \lambda_1) s^n - n \lambda_0 s^n = 0,$$ 
i.e. $\tau_n (\omega_{-n})$ is exotic equivariant closed (in this case equivariant closed).

If $n\neq 0$, one shows that 
$$d\left(\frac{1}{n}\lambda_1e^{- n\theta} \right)=(\lambda_0+\lambda_1d\theta)e^{- n\theta}=\omega_{-n},$$
i.e. $\omega_{-n}$ is $(d+H)$-exact (in this case $d$-exact).
The odd case is similar and is omitted.

We now consider the chiral setting. Since $\widehat{H}$ vanishes, the formula for the differential $D_{\widehat{Z}, \widehat{H}} = D_{\widehat{Z}, \widehat{0}}$ simplifies as follows.
$$D_{\widehat{Z}, \widehat{0}}  =  D_{\widehat{Z}} +  D^2 + D^4, \qquad D^2 =    -  (:\iota_{\widehat{A}} L_A:)_{(0)},\qquad  D^4 =  (:\iota_{\widehat{A}} L_A:)_{(1)}.$$
Unlike the general case, note that $\cA^{\text{ch},\widehat{H}}(\widehat{Z})$ is in fact graded by conformal weight, not just filtered, and $D_{\widehat{Z}, \widehat{0}}$ preserves the weight grading.

Recall that $\Omega^{\text{ch},H}(Z) = \Omega^{\text{ch},0}(Z)$ admits a contracting homotopy for the differential $D$; there is a field $G$ whose mode $G_0$ is globally defined, and $[D,G_0] = L_0$, where $L_0$ denotes the conformal weight grading operator. This shows that the cohomology vanishes in positive weight. Since $Z = M \times \TT$, we can write $G_0$ as the sum of two commuting operators $$G_0 = G^M_0 + :\iota_{d\theta} \partial \theta:.$$ Note that even though the coordinate function $\theta$ is only defined up to integer shifts, both the contraction operator $\iota_{d\theta}$ and the derivative $\partial \theta$ are globally defined. Also, $\partial \theta$ can be identified with the element $\Gamma^A$ defined earlier.

Under $\tau^{\text{ch}}$, we have $\tau^{\text{ch}}(G_0) = G^M_0 + (:\widehat{A} \partial \theta:)_{1}$; note that the second term lowers weight by one. We can correct this by adding the operator $ -(:\widehat{A} \partial \theta:)_{1} +  (:\widehat{A} \partial \theta:)_{0}$, which commutes with $\tau^{\text{ch}}(G_0)$. Setting
$$\widehat{G}_0 =  \tau^{\text{ch}}(G_0)  -(:\widehat{A} \partial \theta:)_{1} +  (:\widehat{A} \partial \theta:)_{0},$$ this is easily seen to be a contracting homotopy for $D_{\widehat{Z}, \widehat{0}}$ in the sense that
$$[D_{\widehat{Z}, \widehat{0}}, \widehat{G}_0] = L_0,$$ where $L_0$ is the conformal weight grading operator. It follows that in case of trivial bundles and fluxes, the positive weight cohomology of the exotic complex $(\cA^{\text{ch}, \widehat{0}}(\widehat{Z}), D_{\widehat{Z}, \widehat{0}})$ vanishes. Therefore $\tau^{\text{ch}}$ induces an isomorphism in cohomology in this case even though the intertwining property \eqref{intertwiningofd} still fails in positive weight.\\

\noindent {\bf Acknowledgements}\\
Andrew Linshaw was partially supported by Simons Foundation Grant 635650 and National Science Foundation Grant DMS 2001484. Varghese Mathai was partially supported by funding from the Australian Research Council, through the Australian Laureate Fellowship FL170100020.
The authors are grateful to Maxim Zabzine (Uppsala) for suggesting the problem solved in the paper.

\end{document}